\author{Paul Pollack} 
\address{Department of Mathematics \\ University of Georgia \\ Athens, GA 30602}
\email{pollack@uga.edu}
\subjclass[2020]{Primary 11N64; Secondary 11A25, 11B50}
\renewcommand\phi\varphi
\renewcommand{\pod}[1]{\allowbreak\mathchoice
  {\if@display \mkern 18mu\else \mkern 8mu\fi (#1)}
  {\if@display \mkern 18mu\else \mkern 8mu\fi (#1)}
  {\mkern4mu(#1)}
  {\mkern4mu(#1)}
}
\DeclareMathAlphabet{\curly}{U}{rsfs}{m}{n}
\newcommand{\odd}{\mathrm{odd}}
\newcommand{\Li}{\mathrm{Li}}
\newcommand\Z{\mathbb{Z}}
\newcommand\lcm{\mathrm{lcm}}
\renewcommand\v{\mathbf{v}}
\newcommand\V{\mathcal{V}}
\newtheorem{thm}{Theorem}[section]
\newtheorem{prop}[thm]{Proposition}
\newtheorem{lem}[thm]{Lemma}
\theoremstyle{remark}
\numberwithin{equation}{section}
\begin{document}
\title{Two problems on the distribution of Carmichael's lambda function}
\begin{abstract} Let $\lambda(n)$ denote the exponent of the multiplicative group modulo $n$. We show that when $q$ is odd, each coprime residue class modulo $q$ is hit equally often by $\lambda(n)$ as $n$ varies. Under the stronger assumption that $\gcd(q,6)=1$, we prove that equidistribution persists throughout a Siegel--Walfisz-type range of uniformity. By similar methods we show that $\lambda(n)$ obeys Benford's leading digit law with respect to natural density. Moreover, if we assume GRH, then Benford's law holds for the order of $a$ mod $n$, for any fixed integer $a\notin \{0,\pm 1\}$.\end{abstract}
\maketitle
 
\section{Introduction} If $f$ is a naturally-occurring integer-valued arithmetic function, it is reasonable to ask how the values of $f$ are distributed in arithmetic progressions. Several results of this nature are collected in Narkiewicz's monograph \cite{narkiewicz84}. In particular, Chapter IV of that reference describes in detail an easily-applicable criterion of Delange (appearing originally as \cite[Theorem 1]{delange69}) deciding, for each additive function $f$ and each modulus $q$, whether $f$ is uniformly distributed (UD) modulo $q$. 

The situation for multiplicative functions is more subtle. Here Euler's $\phi$-function is a convenient reference point. It has long been known that for each fixed positive integer $q$, the relation $q \mid \phi(n)$ holds for almost all $n$, meaning for all but $o(x)$ values of $n\le x$ as $x\to\infty$. (This, and much more, follows from arguments given below.) That is, the class of $0$ modulo $q$ `hogs' almost all the values of $\phi(n)$, ruling out uniform distribution mod $q$ except in the trivial case $q=1$. This phenomenon prompted Narkiewicz in \cite{narkiewicz67} to introduce a weaker notion of equidistribution: \textsf{$f$ is weakly uniformly distributed mod $q$} (or \textsf{WUD} mod $q$) if (a) $\gcd(f(n),q)=1$ for infinitely many $n$, and (b) for each coprime residue class $a\bmod{q}$,
\begin{equation}\label{eq:WUDdef} \#\{n\le x: f(n)\equiv a\pmod{q}\} \sim \frac{1}{\phi(q)}\#\{n\le x: \gcd(f(n),q)=1\}, \quad\text{as~$x\to\infty$}. \end{equation}
In the same paper, Narkiewicz gives a criterion for weak uniform distribution that may be applied to multiplicative functions that are `polynomial-like' (in a sense we will not describe here). As applications, he classifies the moduli $q$ for which $d(n)$ (the divisor function), and $\phi(n)$, are WUD mod $q$. For instance, $\phi(n)$ is WUD mod $q$ precisely when $\gcd(q,6)=1$. For a full development of the theory of weak uniform distribution of polynomial-like multiplicative functions (incorporating later refinements by Narkiewicz and collaborators), see Chapters V and VI of the previously mentioned monograph \cite{narkiewicz84}.

The central object of study in this paper is Carmichael's $\lambda$-function \cite{carmichael19}, which is a close cognate of Euler's function. Whereas $\phi(n)$ gives the \emph{order} of the unit group $U(\Z/n\Z)$, Carmichael's function $\lambda(n)$ describes its \emph{exponent}; that is, $\lambda(n)$ is the smallest positive integer for which $a^{\lambda(n)}\equiv 1\pmod{n}$ for each $a$ coprime to $n$. The $\lambda$ function is not multiplicative but is what might be called \textsf{lcm-multiplicative}: 
\begin{equation}\label{eq:lambdalcmrelation} \lambda(\lcm[m,n]) = \lcm[\lambda(m),\lambda(n)], \quad\text{for all positive integers $m,n$.} \end{equation}
As known already to Gauss, $\lambda(p^k) = \phi(p^k)$ whenever $p^k$ is an odd prime power, $\lambda(1) = \lambda(2) = 1$, $\lambda(4) = 2$, and $\lambda(2^k) = 2^{k-2}$ for all $k\ge 3$; these values, along with the relation \eqref{eq:lambdalcmrelation}, determine $\lambda(n)$ for all inputs $n$.

Several statistical properties of $\lambda$, such as its average, typical, and lower order, are investigated by Erd\H{o}s, Pomerance, and Schmutz in \cite{EPS91}. One takeaway from their study is that despite having similar definitions, $\phi$ and $\lambda$ can behave quite differently. To give just one example: It is classical that $\phi(n) \gg n/\log\log{(3n)}$ for all $n$, whereas it is shown in \cite{EPS91} that there is a sequence of $n$ tending to infinity along which $\lambda(n) \le (\log{n})^{O(\log\log\log{n})}$. As far as we are aware, there have not been prior investigations into the distribution of $\lambda(n)$ in residue classes. It is easy to prove that $\lambda(n)$ is even whenever $n\ge 3$, and so $\lambda(n)$ cannot be WUD mod $q$ if $2\mid q$. In our first theorem we prove that this is the only obstruction.

\begin{thm}\label{thm:WUD1} $\lambda$ is WUD mod $q$ for all odd $q$. In fact, as $x\to\infty$,
\begin{equation}
     \#\{n\le x: \lambda(n) \equiv a\pmod{q}\} \sim \frac{1}{\phi(q)} \#\{n\le x: \gcd(\lambda(n),q)=1\}, \label{eq:lambdaWUD}
\end{equation}
uniformly for coprime residue classes $a\bmod{q}$ with $q$ odd and $q\le \log\log\log{x}$.
\end{thm}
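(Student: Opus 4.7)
The natural attack is via character sums. By orthogonality of Dirichlet characters modulo $q$, Theorem~\ref{thm:WUD1} is equivalent to establishing
\[
S_{\chi}(x) := \sum_{n \le x} \chi(\lambda(n)) = o\bigl( M(x) \bigr), \qquad M(x) := \#\{n \le x : \gcd(\lambda(n), q) = 1\},
\]
for every non-principal $\chi$ modulo $q$, uniformly in $q \le \log\log\log x$. The denominator $M(x)$ yields to a Wirsing or Landau--Selberg--Delange treatment: $\gcd(\lambda(n), q) = 1$ is essentially a restriction on the prime factors of $n$ (no $p \mid n$ with $p \equiv 1 \pmod{\ell}$ for some $\ell \mid q$, plus $v_p(n) = 1$ at $p \mid q$), giving $M(x) \asymp_q x / (\log x)^{\alpha(q)}$ for some $\alpha(q) \in (0, 1)$. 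It therefore suffices to bound $S_\chi(x)$ by a slightly smaller power of $\log x$ times $x$.

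For $S_\chi$, the strategy is to dissect $n$ by its largest prime factor $p = P(n)$: write $n = m p^k$ with $p \nmid m$ and $p > P(m)$. The $k \ge 2$ contribution is readily $O(x/\log x)$. In the principal case $k = 1$, $\lambda(n) = \lcm(\lambda(m), p - 1)$, so on the coprime support where $\chi$ is nonzero,
\[
\chi\bigl(\lcm(\lambda(m), p - 1)\bigr) = \chi(\lambda(m))\,\chi(p - 1)\,\chi\bigl(\gcd(\lambda(m), p - 1)\bigr)^{-1}.
\]
M\"obius inversion to linearize the gcd-condition reduces the inner sum over $p$ to a $\tau_3(\lambda(m))$-sized combination of sums of the form $\sum_{p \le x/m,\, k' \mid p - 1} \chi(p - 1)$ taken over divisors $k' \mid \lambda(m)$. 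Each such sum has vanishing main term by orthogonality (a non-principal $\chi$ summed against a complete set of coprime residues gives $0$), and the Siegel--Walfisz theorem bounds the error by $O\bigl((x/m)\exp(-c\sqrt{\log(x/m)})\bigr)$, uniformly for $k' q \le (\log(x/m))^{A}$.

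This applies cleanly when $\lambda(m)\, q \le (\log(x/m))^{A}$, for instance when $m \le \exp(\sqrt{\log x})$; summing over such $m$ with a Wirsing-type bound $\sum_{m \le M} \tau_3(\lambda(m))/m \ll (\log M)^{O(1)}$ contributes $\ll x \exp(-c'\sqrt{\log x})$ to $S_\chi(x)$, easily $o(M(x))$. The main obstacle is handling $n$ outside this regime---those with $m = n/P(n)$ too large for Siegel--Walfisz to accommodate the modulus $\lambda(m)\,q$. One tactic is to iterate the dissection on $m$ itself, peeling successive large prime factors until reaching the Siegel--Walfisz regime; another is to select the peeled prime from a fixed polylogarithmic window, making the inner-sum modulus automatically $\ll (\log x)^{B}$. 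The generous uniformity budget $q \le \log\log\log x$, dramatically smaller than the Siegel--Walfisz threshold $(\log x)^{A}$, gives ample slack to balance these parameters. The principal difficulty throughout is the non-multiplicativity of $\lambda$: whereas for $\phi$ one can treat $\sum_n \chi(\phi(n)) n^{-s}$ directly via its Euler product, the lcm-structure of $\lambda$ entangles pairs of prime factors through gcd's, which is exactly what the prime-by-prime dissection sketched above is designed to navigate.
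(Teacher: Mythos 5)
Your reduction to bounding $S_\chi(x)=\sum_{n\le x}\chi(\lambda(n))$ for nonprincipal $\chi$ is fine, but the pivotal claim --- that $\sum_{p\le x/m,\ k'\mid p-1}\chi(p-1)$ has ``vanishing main term by orthogonality'' --- is false, and the argument collapses at that point. The sum involves $\chi(p-1)$, not $\chi(p)$: as $p$ ranges over primes $\equiv 1\pmod{k'}$ (with $(k',q)=1$), the residues $p\bmod q$ equidistribute over the coprime classes, so the main term is proportional to
\[
\sum_{\substack{b\bmod q\\ (b,q)=1}}\chi(b-1)\;=\;\sum_{b\bmod q}\chi_0(b)\chi(b-1),
\]
a Jacobi-type sum which is \emph{not} zero for nonprincipal $\chi$ in general. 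For prime $q$ it equals $-\chi(-1)$ for every nonprincipal $\chi$ (the values $b-1$ run over all classes except $-1$), and for $q=p^k$ the computation in Lemma \ref{lem:congruencecount} shows it has absolute value $p^{k-1}$ for exactly $p-2$ nontrivial characters. Consequently a single peeling of the largest prime leaves a main term of relative size $\asymp 1/\phi_2(q)$ compared with $M(x)$ --- small, but not $o(1)$ for fixed $q$ --- so $S_\chi(x)=o(M(x))$ does not follow.

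The remedy for most characters is to iterate: peeling $J\to\infty$ large primes turns this bias into the $J$-fold character sum of Lemma \ref{lem:congruencecount}, whose relative error $\exp(O(\sum_{p\mid q}(p-2)^{1-J}))$ tends to $1$ \emph{provided every prime factor of $q$ is at least $5$}. That is precisely the mechanism behind Theorem \ref{thm:WUD2} (the decomposition $n=mP_J\cdots P_1$ into ``convenient'' numbers). But when $3\mid q$ the factor $(3-2)^{1-J}=1$ never decays: any $a$ with $a(a-1)$ coprime to $3$ has $a-1\equiv 1\pmod 3$, so the products $\prod_j(P_j-1)$ are identically $1$ modulo $3$ and no amount of peeling, nor any appeal to Siegel--Walfisz, restores equidistribution for the corresponding characters. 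Your proposal offers no mechanism for this case; indeed, under your (incorrect) vanishing claim the case $3\mid q$ would look no different from the rest. The paper handles it by an entirely separate anatomical argument (\S\ref{sec:thm1proof}): the near-identity $(\lambda(m),P-1)=s_w(P-1)$ of Lemma \ref{lem:gcdformula} reduces the problem to the distribution of $s_w(P-1)$ modulo $3$, which is controlled in the end by the convergence of $\sum_p\chi(p)/p$ for the character mod $3$. It is this step, not the Siegel--Walfisz uniformity issue you focus on, that forces the restriction $q\le\log_3 x$ in Theorem \ref{thm:WUD1}.
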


The range of uniformity in Theorem \ref{thm:WUD1} is rather modest (to put it mildly). If we assume that $3\nmid q$, we can do much better.

\begin{thm}\label{thm:WUD2} Fix $A>0$. As $x\to\infty$, the relation \eqref{eq:lambdaWUD} holds uniformly for moduli $q \le (\log{x})^{A}$ with $\gcd(q,6)=1$.
\end{thm}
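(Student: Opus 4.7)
The plan is to apply orthogonality of Dirichlet characters modulo $q$. Writing
\[
\#\{n\le x: \lambda(n)\equiv a\pmod q\} \;=\; \frac{1}{\phi(q)}\sum_\chi \bar\chi(a)\,S_\chi(x),
\]
with
\[
S_\chi(x) \;:=\; \sum_{n \le x} \chi(\lambda(n))
\]
(and the convention $\chi(\lambda(n))=0$ whenever $\gcd(\lambda(n),q)>1$), the principal character reproduces the right-hand side of \eqref{eq:lambdaWUD}, so the theorem reduces to showing
\[
S_\chi(x) \;\ll_{A,B}\; \frac{x}{(\log x)^B}
\]
uniformly over non-principal $\chi\pmod q$ and $q \le (\log x)^A$, for any fixed $B>0$.

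To exploit cancellation I would isolate the largest prime factor of $n$. With $y = \exp((\log\log x)^2)$, a standard smooth-number estimate together with the bound on the count of $n \le x$ divisible by the square of its largest prime factor show that, outside a negligible exceptional set, $n = mP$ with $P$ the largest prime factor of $n$, $P > y$, and $\gcd(m,P)=1$. Lcm-multiplicativity gives $\lambda(n) = \lcm(\lambda(m), P-1)$, and since $\gcd(\lambda(n),q)=1$ forces each of $\lambda(m)$, $P-1$, and $\gcd(\lambda(m),P-1)$ to be coprime to $q$,
\[
\chi(\lambda(n)) \;=\; \chi(\lambda(m))\,\chi(P-1)\,\bar\chi(\gcd(\lambda(m),P-1)).
\]
Parameterizing by $d = \gcd(\lambda(m),P-1)$ and applying M\"obius inversion to the induced coprimality condition, the $P$-sum breaks into pieces of the form
\[
\sum_{\substack{P\le x/m \\ P\equiv 1\,(\mathrm{mod}\,D)}} \chi\!\bigl((P-1)/D\bigr),\qquad D \mid \lambda(m),
\]
each of which I would estimate via the quantitative Siegel--Walfisz theorem by regrouping $P$ into residue classes modulo $Dq$ (valid for $Dq \le (\log x)^A$).

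The principal obstacle is the lcm correction $\bar\chi(\gcd(\lambda(m),P-1))$: unlike the genuinely multiplicative case of $\phi$, the Siegel--Walfisz residue-class main terms, once summed over $D \mid \lambda(m)$ with the weights arising from the M\"obius manipulation, do not obviously cancel for each fixed $m$, and the residual contributions must ultimately be absorbed into the outer $m$-sum by invoking the non-principality of $\chi$ a second time. To control the divisor-function inflation $\tau(\lambda(m))$ arising from this parameterization, I would truncate $\lambda(m)$ to its $(\log x)^{O(1)}$-smooth part via a Brun--Titchmarsh sieve and combine everything with a Wirsing-type mean-value bound $\sum_{m\le x}\tau(\lambda(m))\ll x(\log x)^{O(1)}$. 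Balancing the smoothing parameter $y$, the smooth-part cutoff for $\lambda(m)$, and the Siegel--Walfisz modulus bound $Dq \le (\log x)^A$ is the quantitative step on which the whole argument hinges.
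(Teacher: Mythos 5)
Your overall framework (orthogonality, peeling off the largest prime factor, handling the lcm via the gcd with the smooth part, Siegel--Walfisz/Bombieri--Vinogradov in residue classes mod $Dq$) is in the right spirit, but there is a decisive gap at exactly the point you flag as the ``principal obstacle,'' and it cannot be repaired by ``invoking the non-principality of $\chi$ a second time.''

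The problem is quantitative. After you fix $m$ and $D$ and sum $\chi\bigl((P-1)/D\bigr)$ over primes $P\le x/m$ with $P\equiv 1\pmod D$, Siegel--Walfisz equidistributes $P$ among the coprime classes mod $Dq$, and the resulting \emph{main term} is $\frac{\Li(x/m)}{\phi(Dq)}\sum_{c}\chi(c)$, where $c$ runs over classes mod $q$ with $c$ and $1+Dc$ both coprime to $q$. For a nontrivial character to a prime modulus $p\mid q$ this incomplete character sum has absolute value about $1$, whereas the corresponding count of admissible classes is $p-2$. So the relative cancellation you gain from one shifted prime is only $1/(p-2)$ per prime $p\mid q$ --- bounded away from $0$ when, say, $5\mid q$. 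Each further iteration (extracting the next largest prime factor of $m$) multiplies the saving by another $1/(p-2)$, so after any \emph{bounded} number of iterations the non-principal $S_\chi(x)$ is still of the same order of magnitude as the main term, and \eqref{eq:lambdaWUD} does not follow. This is why the paper's proof extracts $J=\lfloor\log_3 x\rfloor\to\infty$ large prime factors $P_1,\dots,P_J$ and reduces to counting tuples $(P_1,\dots,P_J)\bmod q$ with $\prod_j(P_j-1)$ in a prescribed class: the relevant character-sum error there is $(p-2)^{1-J}$ (Lemma \ref{lem:congruencecount}), which tends to $0$ only because $J\to\infty$ and $p\ge 5$. (The same computation with $p=3$ gives $(p-2)^{1-J}=1$, which is why $3\nmid q$ is essential.) Your proposal is missing this idea --- an unbounded iteration depth --- and without it the argument fails already for $q=5$.

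A secondary issue: the moduli $D$ arising from $\gcd(\lambda(m),P-1)$ (equivalently, from the $w$-smooth part of $P-1$) are not confined to $(\log x)^{O(1)}$; truncating them there by Brun--Titchmarsh alone is not enough, and the paper instead runs the fundamental lemma of the sieve together with Bombieri--Vinogradov on average over moduli up to $\exp((\log x)^{1/4})$. But this is a technical matter; the essential missing ingredient is the $J\to\infty$ decomposition.
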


Our next two theorems are of a slightly different nature. To set the stage, fix an integer $b\ge 2$.  Let $D$ be a positive integer, and let $x$ be a positive real number. We say that \textsf{$x$ begins with $D$ in base $b$} if the most significant digits of $x$ in base $b$ are the base $b$ digits of $D$.  For example, $357$ and $0.03512$ both begin with $D=35$ in base $10$. A sequence $\{a_n\}$ of positive real numbers is said to obey \textsf{Benford's law} in base $b$ if, for every positive integer $D$, the asymptotic density of $n$ for which $a_n$ begins with $D$ is $\log(1+D^{-1})/\log{b}$. 

Diaconis observed in \cite{diaconis77} that $\{a_n\}$ is Benford in base $b$ precisely when the sequence $\{\frac{\log a_n}{\log{b}}\}$ is uniformly distributed modulo $1$. It is useful to rephrase this conclusion  using Weyl's criterion. For each integer $k$, let $\theta_k = 2\pi k/\log{b}$. Then $\{a_n\}$ is Benford in base $b$ precisely when $a_n^{i\theta_k}$ has limiting mean value $0$, for each $k=1,2,3,\dots$. This criterion was recently used in \cite{CLPSR23} to study Benford behavior of sequences described by multiplicative functions. For example, it was shown there that the sequences $\{\phi(n)\}$ and $\{\sigma(n)\}$ (with $\sigma$ the sum-of-divisors function) are not Benford in any base $b$.\footnote{Here it is important that we use asymptotic density in our definition. If we were to use logarithmic density instead, $\{\phi(n)\}$ and $\{\sigma(n)\}$ could be shown to obey Benford's law in every base.} On the other hand, the sequence $\{\tau_k(n)\}$ (with $\tau_k$ the $k$-fold divisor function) is Benford in base $b$ if and only if $\log{k}/\log{b}$ is irrational. (Actually the work in \cite{CLPSR23} is carried out in base $10$, but the arguments generalize to arbitrary bases.) 

Our next theorem implies that $\{\lambda(n)\}$ is Benford in every base.  

\begin{thm}\label{thm:Benford} Fix a nonzero real number $\theta$. Then $\sum_{n\le x} \lambda(n)^{i\theta} = o(x)$, as $x\to\infty$.
\end{thm}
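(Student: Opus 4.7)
By Weyl's criterion (recalled just above the theorem statement), it suffices to prove the displayed estimate for fixed $\theta\ne 0$. My plan is to use the hyperbola method to peel off the largest prime factor of $n$, exploit the lcm-multiplicativity \eqref{eq:lambdalcmrelation} of $\lambda$, and reduce the problem to prime sums weighted by $(p-1)^{i\theta}$ together with oscillating arithmetic factors.

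Set $y=x^{1/\log\log x}$. Standard smooth-number and squarefull bounds show that the $n\le x$ with $P^+(n)\le y$ or $P^+(n)^2\mid n$ contribute only $o(x)$. For the remaining $n$, write $n=mp$ with $p=P^+(n)$; then $y<p\le x/m$, $p\nmid m$, $P^+(m)<p$, and \eqref{eq:lambdalcmrelation} gives $\lambda(n)=\lcm[\lambda(m),p-1]=\lambda(m)(p-1)/d$ where $d:=\gcd(\lambda(m),p-1)$. Decomposing by $d$, the sum of interest becomes, up to $o(x)$,
\[
\sum_m \lambda(m)^{i\theta}\sum_{d\mid\lambda(m)} d^{-i\theta}\sum_{\substack{p\in I_m\\\gcd(p-1,\lambda(m))=d}}(p-1)^{i\theta},
\]
where $I_m=(\max(y,P^+(m)),\,x/m]$. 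The innermost sum I would handle using the Prime Number Theorem in arithmetic progressions (for $d\mid p-1$) together with a Brun sieve (for the coprimality condition $\gcd((p-1)/d,\lambda(m)/d)=1$); the Siegel--Walfisz uniformity implicit in Theorem~\ref{thm:WUD2} should cover the relevant range of $d$.

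The crux is recovering the extra cancellation needed to beat the trivial bound. A term-by-term evaluation merely yields $\ll x\log\log x$, so oscillation in the outer $m$-sum is essential. Observe that $\lambda(m)^{i\theta}/m^{i\theta}=u(m)^{-i\theta}$ where $u(m)=m/\lambda(m)$ is typically of size $(\log m)^{\log\log\log m}$, so $\log u(m)$ grows unboundedly; if it becomes equidistributed modulo $2\pi/\theta$, the $m$-sum is $o(x)$ and we are done. I would attempt this either by a bootstrap -- applying the theorem itself inductively in $x$ to control the $m$-sum -- or by an iterated hyperbola peeling off a second prime to produce a bilinear structure amenable to Cauchy--Schwarz or large-sieve bounds. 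Establishing this cancellation rigorously -- essentially a self-improvement of the statement at a shorter scale -- is the main obstacle.
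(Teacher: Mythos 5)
Your opening moves match the paper's: discard the $y$-smooth $n$ and those with $P^{+}(n)^2\mid n$, write $n=mP$ with $P=P^{+}(n)$, and use $\lambda(n)=\lambda(m)(P-1)/d$ with $d=\gcd(\lambda(m),P-1)$. But the argument stalls exactly where you flag ``the main obstacle,'' and the obstacle you identify is a mirage: no oscillation from the outer $m$-sum is needed, and the paper simply bounds $|\lambda(m)^{i\theta}|=1$ throughout. The cancellation lives entirely in the $d$-variable, and to access it one needs a structural fact your decomposition misses: Lemma \ref{lem:gcdformula} (applied with $q=1$) shows that for all but $o(x)$ of the relevant $n=mP$ the gcd $d$ is not just some divisor of $\lambda(m)$ but equals $s_w(P-1)$, the $w$-smooth part of $P-1$ with $w=\log_2 x$. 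This decouples $d$ from $m$: the condition on $P$ becomes $s_w(P-1)=d$ with $d$ ranging over even $w$-smooth integers, and a fundamental-lemma sieve (plus Bombieri--Vinogradov for the error terms) counts such $P$ with main term proportional to $W\,g(d_{\odd})/\phi(d)$, where $W=\tfrac12\prod_{2<p\le w}\tfrac{p-2}{p-1}\asymp 1/\log_3 x$. Your version, which keeps the entangled condition $\gcd(p-1,\lambda(m))=d$ with $d\mid\lambda(m)$, never reaches this clean form.

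The theorem then follows from the single estimate $\sum_{d\ w\text{-smooth}} g(d_{\odd})\phi(d)^{-1}d^{-i\theta}=O(1)$, proved by expanding the sum as an Euler product whose logarithm is $\sum_{p\le w}p^{-1-i\theta}+O(1)$ and invoking the convergence of $\sum_p p^{-1-i\theta}$ (this is where $\theta\ne 0$ enters). Without the twist by $d^{-i\theta}$ this $d$-sum is $\asymp 1/W$, which is why trivial bounding returns $\asymp x$; with the twist it is $O(1)$, and the prefactor $W\ll 1/\log_3 x$ yields $\sum_{n\le x}\lambda(n)^{i\theta}\ll x/\sqrt{\log_3 x}$. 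Your proposed substitutes for this missing step --- a bootstrap ``applying the theorem itself inductively,'' or equidistribution of $\log(m/\lambda(m))$ modulo $2\pi/\theta$ --- are not carried out; the first is essentially circular, and the second is a claim at least as hard as the theorem itself. As written, the proposal does not contain a proof.
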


Let $a$ be an integer with $|a|>1$. For each positive integer $n$ coprime to $a$, let $\ell_a(n)$ denote the multiplicative order of $a$ mod $n$. In our final theorem we show that, under the assumption of the Generalized Riemann Hypothesis\footnote{here we mean the Riemann Hypothesis for Dedekind zeta functions} (GRH), we can replace $\lambda(n)$ with $\ell_a(n)$ in this last result.

\begin{thm}[conditional on GRH]\label{thm:Benford2} Fix an integer $a$ with $|a| > 1$. For each fixed nonzero real number $\theta$, 
\[ \sum_{\substack{n\le x \\ \gcd(n,a)=1}} \ell_a(n)^{i\theta} = o(x), \qquad\text{as $x\to\infty$}. \]
\end{thm}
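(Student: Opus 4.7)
The proof should mirror that of Theorem \ref{thm:Benford}, with GRH supplying the arithmetic input needed to replace the elementary identity $\lambda(p) = p-1$ by estimates for $\ell_a(p) = (p-1)/[\mathbb{F}_p^\times:\langle a\rangle]$. Critically, $\ell_a$ inherits the lcm-multiplicativity of $\lambda$: for positive integers $m,n$ coprime to $a$,
\[ \ell_a(\lcm[m,n]) = \lcm[\ell_a(m), \ell_a(n)], \]
since $x^t \equiv 1 \pmod{\lcm[m,n]}$ holds iff $x^t \equiv 1 \pmod m$ and $x^t \equiv 1 \pmod n$. So the structural manipulations applied to $\lambda$ carry over.

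First I would restrict to those $n\le x$ coprime to $a$ whose largest prime factor $p = P^+(n)$ satisfies $p > x^{\delta}$ for a small fixed $\delta > 0$; by Dickman's theorem the complementary set has size $o(x)$. On the remaining density-one set, writing $n = mp$ with $p\nmid m$, we have
\[ \ell_a(n) = \lcm[\ell_a(m),\ell_a(p)] = \frac{\ell_a(m)\,\ell_a(p)}{\gcd(\ell_a(m),\ell_a(p))}. \]
Raising to the $i\theta$ power and detecting the gcd via Möbius over divisors $d\mid \ell_a(m)$, one is reduced to inner sums of the form $\sum_{p\le y,\,d\mid \ell_a(p)} \ell_a(p)^{i\theta}$ for each such $d$. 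Under GRH, the effective Chebotarev density theorem of Lagarias--Odlyzko, applied to the Kummer-type fields $L_t = \Q(\zeta_t, a^{1/t})$ and their subfields (the condition $t \mid \ell_a(p)$ decomposes into splitting data via Hooley's inclusion--exclusion), yields
\[ \#\{p\le y : t\mid \ell_a(p)\} = \beta_t\,\pi(y) + O\!\big(y^{1/2}(\log(tay))^{2}\big) \]
for an explicit density $\beta_t$, uniformly for $t$ up to a small power of $y$. Partial summation then gives the inner sum a main term of shape $c_d\,y^{1+i\theta}/((1+i\theta)\log y)$, contributing the decisive saving of $|1+i\theta|^{-1}/\log y$.

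Assembling: for fixed $m$, the inner $p$-sum is $O\!\big((x/m)/\log(x/m)\big)$ with bounded implicit constant, and summing over $m\le x^{1-\delta}$ yields $O(x/\log x) = o(x)$, as required. The principal obstacle will be making the Chebotarev step uniform in $d$ as $d$ ranges over divisors of $\ell_a(m)$ for $m\le x$. This requires controlling the degrees and discriminants of the $L_d$, both polynomially bounded in $d$ and $|a|$, so the Lagarias--Odlyzko bound applies for $d\le x^{\eta}$; the tail $d > x^{\eta}$ must be absorbed by truncation and divisor-function estimates, paralleling how the analogous large-conductor issues are handled in Theorem \ref{thm:Benford}. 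A secondary complication is the combinatorial bookkeeping of the Möbius expansion of $\gcd(\ell_a(m), \ell_a(p))^{-i\theta}$, which mirrors but slightly elaborates the $\gcd(\lambda(m), p-1)^{-i\theta}$ expansion expected in the proof of Theorem \ref{thm:Benford}.
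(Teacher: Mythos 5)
Your skeleton (lcm-multiplicativity of $\ell_a$, splitting off $P=P^{+}(n)$, and handling $\gcd(\ell_a(m),\ell_a(P))$) matches the paper's, but there is a genuine gap at the heart of the argument: you cannot evaluate $\sum_{p\le y,\ d\mid \ell_a(p)} \ell_a(p)^{i\theta}$ by partial summation from a Chebotarev-type count of $\{p\le y: d\mid \ell_a(p)\}$. Partial summation converts a counting function ordered by $p$ into sums of the form $\sum p^{i\theta}$, but $\ell_a(p)^{i\theta}$ is not a slowly varying function of $p$: the index $(p-1)/\ell_a(p)$ fluctuates over divisors of $p-1$, so the phase $\ell_a(p)^{i\theta}$ is not determined by the location of $p$. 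The paper's resolution is exactly to neutralize this fluctuation first: Lemma \ref{lem:ordlem2} (the GRH input, via Li--Pomerance) shows that for all but $o(x)$ of the relevant $n=mP$, the index $(P-1)/\ell_a(P)$ is $w$-smooth with $w=\log_2 x$, and Lemma \ref{lem:ordlem1} (using Pappalardi's unconditional density result) shows that $(\ell_a(m),\ell_a(P))=s_w(\ell_a(P))$ almost always. Together these give $\ell_a(n)=\ell_a(m)\cdot (P-1)/s_w(P-1)$, so the inner sum becomes $\sum (P-1)^{i\theta}$ over primes with $s_w(P-1)=d$ --- a quantity genuinely accessible by inclusion--exclusion, Bombieri--Vinogradov, and partial summation. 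Your proposal, which keeps $\ell_a(p)^{i\theta}$ in the inner sum, never confronts the index.

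A second, independent error is in your final assembly. Summing $(x/m)/\log(x/m)$ over $m\le x^{1-\delta}$ gives $\asymp x$ (already $m\le \sqrt{x}$ contributes $\gg \frac{x}{\log x}\sum_{m\le\sqrt x}1/m \asymp x$), not $O(x/\log x)$; and the factor $|1+i\theta|^{-1}$ is merely a constant, not a saving. In the paper the $o(x)$ bound comes from two sources absent from your accounting: the sieve factor $W\asymp 1/\log w = 1/\log_3 x$ attached to the condition $s_w(P-1)=d$, and --- decisively --- the boundedness (rather than the trivial bound $\asymp\log w$) of the Euler product $\sum_{d'} g(d')/(\phi(d')d'^{i\theta})$ over $w$-smooth odd $d'$, which rests on the convergence of $\sum_p p^{-1-i\theta}$ for $\theta\ne 0$. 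This oscillation in the $d$-aspect, not in the $p$-aspect, is where Benford's law is actually proved; without it the argument cannot close.
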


Thus, Benford's law holds for the orders $\ell_a(n)$ (where now the relevant densities are to be computed relative to the set of $n$ with $\gcd(n,a)=1$).

We conclude this introduction with a word about the proofs. The UD and WUD criteria of Delange and Narkiewicz alluded to in the introduction are proved by recasting the respective problems in terms of mean values of multiplicative functions of modulus not exceeding $1$. (For additive functions, this involves composing with an additive character, while for multiplicative functions one uses Dirichlet characters.) The authors of \cite{CLPSR23} adopt a similar perspective in their work on Benford's law (note that if $f$ is positive-valued and multiplicative, then $f^{i \theta}$ is a multiplicative function of modulus $1$, for every real $\theta$). Such an approach allows one to bring to bear powerful tools such as Hal\'asz's theorem. 

Since $\lambda$ is not multiplicative, we must take a different tack. In recent work with Singha Roy \cite{PSR23}, we proposed an alternative method of proving UD and WUD theorems. This was used to show (among other things) that for $f=\phi$, the relation \eqref{eq:WUDdef} holds uniformly for $q\le (\log{x})^{A}$ with $\gcd(q,6)=1$. Theorem \ref{thm:WUD2} is proved in \S\ref{sec:WUD2} using these same ideas. Here it is important that the contribution of large primes to $\lambda(n)$ can usually be computed as if $\lambda$ were multiplicative; see the discussion following \eqref{eq:coprimecondition}.

The arguments for Theorem \ref{thm:WUD2} fail to establish weak uniform distribution when $3 \mid q$ but do show (as described in \S\ref{sec:interlude}) that a failure of \eqref{eq:lambdaWUD} in this case entails the nonuniform distribution of $\lambda(n)$ mod $3$ (among $n$ with $\gcd(\lambda(n),q)=1$). In \S\ref{sec:thm1proof}, we rule out this pathology in the range $q\le \log\log\log{x}$, thus proving Theorem \ref{thm:WUD1}. This requires bringing in certain anatomical facts of the kind that frequently arise when studying Euler's $\phi$-function. For instance, it may be instructive to compare the proof of our Lemma \ref{lem:gcdformula} with that of Theorem 8 in \cite{ELP08}.

Theorems \ref{thm:Benford} and \ref{thm:Benford2} are proved in \S\ref{sec:benford}. The proof of Theorem \ref{thm:Benford} is a fairly straightforward adaptation of the arguments in \S\ref{sec:thm1proof}. The proof of Theorem \ref{thm:Benford2} has the same basic structure but also requires results on the distribution of the numbers $\ell_a(p)$, which we extract from work of Li--Pomerance \cite{LP03}, Moree \cite{moree05}, and Pappalardi \cite{pappalardi15}.

\subsection*{Notation and conventions} We use $(a,b)$ to denote the greatest common divisor of $a$ and $b$. The letters $\ell, p$, and $P$ should always be read as being restricted to primes values. We write $\log_k$ for the $k$-fold iterate of the natural logarithm. Implied constants are usually absolute but are allowed depend on parameters described explicitly as `fixed'; concretely, this means that constants appearing in the proof of Theorem \ref{thm:WUD2} may depend on $A$, that those appearing in the proof of Theorem \ref{thm:Benford} may depend on $\theta$, and that those appearing in the proof of Theorem \ref{thm:Benford2} may depend on both $\theta$ and $a$.

We write $P^{+}(n)$ for the largest prime factor of the positive integer $n$ and adopt the convention that $P^{+}(1)=1$. If $P^{+}(n) \le y$, we say that $n$ is \textsf{$y$-smooth}. The \textsf{$y$-smooth part} of $n$ refers to the largest $y$-smooth divisor of $n$; we denote this by $s_y(n)$, so that
\[ s_y(n) = \prod_{\substack{p^k \parallel n\\ p\le y}} p^k. \]
We let $P_1(n)=P^{+}(n)$ and define, inductively, $P_{k+1}(n) = P^{+}(n/P_{k}(n))$; thus, $P_k(n)$ is the $k$th largest prime factor of $n$ with multiplicity taken into account. 

We will also need the \textsf{Schemmel totient} function $\phi_2(n)$, defined as the count of residue classes $a\bmod{n}$ with $\gcd(a(a-1),n)=1$. Familiar arguments show that $\phi_2(n) = n\prod_{p\mid n} (1-2/p)$.

\section{Preliminaries}
In this section we record some results needed for the proofs of Theorems \ref{thm:WUD1}--\ref{thm:Benford}. They are all closely related to lemmas appearing in \cite{PSR23}.

For each positive integer $q$, we let $\alpha(q) = \prod_{p \mid q} \left(1-1/(p-1)\right)$. It will be important for the sequel that when $q$ is odd,
\begin{equation}\label{eq:alphalower} \alpha(q) \gg 1/\log_2{(3q)}. \end{equation}
We will very often abbreviate $\alpha(q)$ to $\alpha$. 

The following proposition is a special case of \cite[Proposition 2.1]{PSR23}. A more precise estimate could be extracted from work of Scourfield \cite{scourfield84}, but we shall not need that.

\begin{prop}\label{prop:sumcoprime} Fix $A>0$. For $x$ tending to infinity and all odd integers $q \le (\log{x})^A$, 
  \begin{equation}\label{eq:sumcoprime} \#\{n\le x: (\lambda(n),q)=1\} = \frac{x}{(\log{x})^{1-\alpha}} \exp(O((\log_2(3q))^{2})).  \end{equation}
  \end{prop}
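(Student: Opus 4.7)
The first step is to translate the condition $(\lambda(n), q) = 1$ into a condition on the prime factorization of $n$. Since $q$ is odd, the $2$-power part of $n$ contributes only to the even part of $\lambda(n)$ and may be ignored. For each odd prime power $p^k \| n$, $\lambda(p^k) = \phi(p^k) = p^{k-1}(p-1)$, so $(\lambda(p^k), q) = 1$ is equivalent to (a) $(p-1, q) = 1$, together with (b) $k = 1$ in case $p \mid q$. Consequently, $(\lambda(n), q) = 1$ if and only if every odd prime divisor $p$ of $n$ belongs to the set $\mathcal{A}_q := \{p > 2 \colon (p-1, q) = 1\}$, and every prime $p$ of $q$ with $p \mid n$ satisfies $p \| n$.

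The set $\mathcal{A}_q$ is a union of residue classes modulo $q$ of relative prime density $\alpha = \alpha(q)$, by inclusion-exclusion over divisors of $q$ and Dirichlet's theorem. Quantitatively, in the range $q \le (\log x)^A$, Siegel--Walfisz combined with Mertens's identity yields a Mertens-type estimate
\[
\sum_{\substack{p \le y \\ p \in \mathcal{A}_q}} \frac{1}{p} = \alpha \log_2 y + \kappa_q + O_A((\log y)^{-1}),
\]
valid for $y$ up to a power of $x$, with $|\kappa_q| \ll (\log_2(3q))^2$. The bound on $\kappa_q$ is obtained by crudely estimating the inclusion-exclusion sum $\sum_{d \mid q} \mu(d) \sum_{p \le y,\, p \equiv 1 \pmod d} 1/p$ against $\sum_{d \mid q} \log_2(3d)/\phi(d)$.

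With this input, a Wirsing--Selberg--Delange-type theorem for integers supported on a prescribed set of primes gives
\[
\#\bigl\{n \le x \colon \text{every odd $p \mid n$ lies in } \mathcal{A}_q\bigr\} \asymp e^{\kappa_q} \cdot \frac{x}{(\log x)^{1-\alpha}}.
\]
The additional squarefree constraint at primes of $q$ changes the count by a factor of $\prod_{p \mid q}(1 + 2/p) = \exp(O(\log_2(3q)))$, which is absorbed into the same error term. Combining these contributions yields the proposition.

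The main obstacle is the uniformity in $q$: Siegel--Walfisz handles the sum over primes uniformly in our range, but lifting Selberg--Delange to a statement whose implicit constants remain controlled as $q$ grows (rather than exploding via $\Gamma(\alpha)^{-1}$ with $\alpha \to 0$) requires some care. This is precisely the technical content of \cite[Proposition 2.1]{PSR23}, which combines a Hal\'asz-type upper bound for the count of $n$ supported on $\mathcal{A}_q$ with a matching lower bound established via a Landau-type analysis of the associated Dirichlet series $\prod_{p \in \mathcal{A}_q}(1 - p^{-s})^{-1}$; the use of the lower bound \eqref{eq:alphalower} is what keeps the $\Gamma$-factor under control.
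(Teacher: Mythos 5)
Your proposal is correct and follows essentially the same route as the paper: the condition $(\lambda(n),q)=1$ is recast as $(\phi(n),q)=1$ (your prime-factorization characterization is exactly this), and the counting estimate with the stated uniformity is then quoted from \cite[Proposition 2.1]{PSR23} applied to $f=\phi$, with the key arithmetic input being the Mertens-type estimate for $\sum_{p\le y,\,(p-1,q)=1}1/p$. The paper's only additional remark is that the exponent $2$ on $\log_2(3q)$ comes from inspecting the proof in \cite{PSR23} for $f=\phi$, and that the Mertens estimate there rests on the elementary Norton--Pomerance lemma rather than Siegel--Walfisz.
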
 

Actually, Proposition 2.1 of of \cite{PSR23} estimates the frequency with which a general ``polynomial-like'' multiplicative function $f$ is coprime to a given integer $q$. Taking $f=\phi$ gives Proposition  \ref{prop:sumcoprime}, after observing that the conditions $(\lambda(n),q)=1$ and $(\phi(n),q)=1$ are equivalent. (For any finite abelian group, the exponent and order share the same set of prime factors.) We note that in \cite{PSR23}, the exponent of $\log_2{(3q)}$ is given as $O(1)$, but inspecting the proof reveals that this exponent can be taken as $2$ for $f=\phi$.

Proposition \ref{prop:sumcoprime} is established by a combination of sieve methods and mean value theorems. The key arithmetic input is an estimate for $\sum_{p\le x,~(p-1,q)=1} 1/p$ (see \cite[Lemma 2.4]{PSR23}). We now restate this estimate alongside a slight extension that will be needed later.
\begin{prop}\label{prop:slightgen} For each positive integer $q$, and all $x\ge 3$, 
  \[ \sum_{\substack{p \le x \\ (p-1,q)=1}} \frac{1}{p} = \alpha(q) \log_2{x} + O((\log_2{(3q)})^2). \]
If additionally $s$ is a positive integer coprime to $q$, then
  \[ \sum_{\substack{p \le x \\ (p-1,q)=1 \\ s\nmid p-1}} \frac{1}{p} = \alpha(q)\left(1-\frac{1}{\phi(s)}\right)\log_2{x} + O((\log_2{(3q)})^2). \]
\end{prop}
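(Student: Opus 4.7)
The first identity is exactly \cite[Lemma 2.4]{PSR23}, so I will take it as given. To deduce the second, I plan to evaluate the complementary sum
\[
S := \sum_{\substack{p\le x\\(p-1,q)=1\\ s\mid p-1}}\frac{1}{p}
\]
and subtract it from the first identity.

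To handle $S$, write $\1_{(p-1,q)=1} = \sum_{d\mid (q,p-1)}\mu(d)$ and swap the order of summation. Since $(d,s)\mid (q,s)=1$ for each $d\mid q$, we have $\lcm[d,s]=ds$, so
\[
S = \sum_{d\mid q}\mu(d)\sum_{\substack{p\le x\\ ds\mid p-1}}\frac{1}{p}.
\]
Plugging in Mertens' theorem for primes in arithmetic progressions (or Siegel--Walfisz, for sharper error terms), the inner sum equals $(\log_2 x)/\phi(ds)$ plus an error depending on $ds$. Using $\phi(ds)=\phi(d)\phi(s)$ (since $(d,s)=1$), the main-term contribution to $S$ is
\[
\frac{\log_2 x}{\phi(s)}\sum_{d\mid q}\frac{\mu(d)}{\phi(d)} = \frac{\log_2 x}{\phi(s)}\prod_{\ell\mid q}\left(1-\frac{1}{\ell-1}\right) = \frac{\alpha(q)\log_2 x}{\phi(s)}.
\]

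The only technical point is to bound the aggregate error by $O((\log_2(3q))^2)$, and I would handle this exactly as in the proof of \cite[Lemma 2.4]{PSR23}: the error per $d\mid q$ is controlled by a quantity of the shape $O_s(1/\phi(d))$ together with a Mertens-type constant of size $\log_2(3d)$ in the modulus $ds$. Summing against $\mu^2(d)$ over $d\mid q$, one uses $\sum_{d\mid q}\mu^2(d)/\phi(d)=\prod_{\ell\mid q}(1+1/(\ell-1))\ll \log_2(3q)$ by Mertens' theorem, and combined with the $\log_2(3q)$ growth of the AP constants this compounds to the claimed $(\log_2(3q))^2$ bound; the explicit dependence on $s$ is absorbed into the implicit constant. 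Finally, subtracting the resulting estimate for $S$ from the first identity yields
\[
\alpha(q)\log_2 x - \frac{\alpha(q)\log_2 x}{\phi(s)} + O((\log_2(3q))^2) = \alpha(q)\left(1-\frac{1}{\phi(s)}\right)\log_2 x + O((\log_2(3q))^2),
\]
which is the desired formula.
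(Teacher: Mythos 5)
Your route is the same as the paper's: split off the complementary sum over $p$ with $s\mid p-1$, expand the coprimality condition by inclusion--exclusion over squarefree $d\mid q$, apply Mertens' theorem in progressions in the quantitative form of Norton--Pomerance (Lemma \ref{lem:PN}, with error $O(\log(3M)/\phi(M))$ for modulus $M$), use $\sum_{d\mid q}\mu(d)/\phi(d)=\alpha(q)$ and $\phi(ds)=\phi(d)\phi(s)$, and invoke the elementary bound $\sum_{d\mid q}|\mu(d)|\log(3d)/\phi(d)\ll(\log_2(3q))^2$ from \cite{PSR23}. The one thing you should not do is ``absorb the explicit dependence on $s$ into the implicit constant'': the error term $O((\log_2(3q))^2)$ must be uniform in $s$, and this uniformity is essential in the application (condition (ii) in the proof of Lemma \ref{lem:gcdformula}, where $s$ runs up to $\log_2{x}/(\log_3{x})^2$ and the resulting bounds are summed over $s$). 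Fortunately no absorption is needed: the per-modulus error satisfies $\log(3ds)/\phi(ds)\le(\log(3d)/\phi(d))\cdot(\log(3s)/\phi(s))\ll\log(3d)/\phi(d)$ uniformly in $s$, since $\log(3s)/\phi(s)\ll 1$; after this reduction the sum over $d\mid q$ is handled exactly as for the first estimate. (A small further caution: the Norton--Pomerance constant for a single modulus is the single logarithm $\log(3d)$, not $\log_2(3d)$; the second factor of $\log_2(3q)$ appears only upon averaging $\log(3d)$ against $\mu^2(d)/\phi(d)$ over $d\mid q$.)
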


For completeness, we sketch the proof of Proposition \ref{prop:slightgen}. We need a lemma due independently to Norton \cite[Lemma, p.\ 669]{norton76} and Pomerance \cite[Remark 1]{pomerance77}.

\begin{lem}\label{lem:PN} Let $q$ be a positive integer and let $x$ be a real number with $x \ge \max\{3,q\}$. For each coprime residue class $a\bmod{q}$,
\[ \sum_{\substack{p\le x \\ p\equiv a\pmod{q}}} \frac{1}{p} = \frac{\log_2{x}}{\phi(q)} + \frac{1}{p_{q,a}} + O\left(\frac{\log(3q)}{\phi(q)}\right), \]
where $p_{q,a}$ denotes the least prime congruent to $a$ modulo $q$.
\end{lem}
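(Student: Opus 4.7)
The lemma is an effective Mertens-type estimate for primes in a fixed coprime residue class, with the (possibly exceptionally small) least prime $p_{q,a}$ peeled off as a distinguished term. My plan is to deduce it by partial summation from the classical Mertens analogue in arithmetic progressions,
\[
  M(y) := \sum_{\substack{p\le y\\ p\equiv a\pmod{q}}}\frac{\log p}{p} = \frac{\log y}{\phi(q)} + O\!\left(\frac{\log(3q)}{\phi(q)}\right) \qquad (y\ge q),
\]
which can be established either via Dirichlet-character orthogonality (combined with a uniform bound $\sum_{p\le y}\chi(p)(\log p)/p = O(\log q)$ for non-principal $\chi\bmod q$) or elementarily by the Selberg-sieve methods in the style of Norton and Pomerance.

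First I would separate the contribution of $p_{q,a}$. Since $\gcd(a,q)=1$, any prime $p\le q$ with $p\equiv a\pmod{q}$ must equal $a$; such a prime exists precisely when $a$ is itself prime, and then $p_{q,a}=a$ contributes exactly $1/p_{q,a}$ to the left-hand side of the lemma. When $a$ is not prime, $p_{q,a}>q$, so $1/p_{q,a}<1/q \ll \log(3q)/\phi(q)$, and hence including or omitting the $1/p_{q,a}$ term introduces only allowable error. For the remaining range $q<p\le x$, Abel summation applied to $M(t)$ with weight $1/\log t$ gives
\[
  \sum_{\substack{q<p\le x\\ p\equiv a\pmod{q}}}\frac{1}{p} = \frac{M(x)}{\log x} - \frac{M(q)}{\log q} + \int_{q}^{x}\frac{M(t)}{t(\log t)^2}\,dt.
\]
Substituting the main term $(\log t)/\phi(q)$ for $M(t)$ evaluates the integral to $(\log_2 x - \log_2 q)/\phi(q)$, while the $O(\log(3q)/\phi(q))$ error in $M(t)$ contributes at most an extra $O(\log(3q)/\phi(q))$ after integration. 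The boundary terms and the stray $\log_2 q/\phi(q)$ are all $O(\log(3q)/\phi(q))$, so combining with the $p \le q$ analysis produces the claimed asymptotic.

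The principal obstacle is the Mertens-type input itself: obtaining the sharp error $O(\log(3q)/\phi(q))$ uniformly in $q\le y$ is the substantive step (and is essentially the content of the Norton/Pomerance results cited), whereas the partial-summation machinery that converts it into the target $\sum 1/p$ statement is routine.
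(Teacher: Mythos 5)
The paper does not prove this lemma; it cites Norton and Pomerance, whose argument is built around the Brun--Titchmarsh inequality. Your route has a genuine gap: the Mertens-type input you posit,
\[
  M(y)=\sum_{\substack{p\le y\\ p\equiv a\pmod{q}}}\frac{\log p}{p}=\frac{\log y}{\phi(q)}+O\!\left(\frac{\log(3q)}{\phi(q)}\right)\qquad(y\ge q),
\]
is not a known theorem and does not follow from the argument you sketch. Orthogonality plus a bound $\sum_{p\le y}\chi(p)(\log p)/p=O(\log q)$ for each of the $\phi(q)-1$ non-principal characters yields an error of size $\frac{1}{\phi(q)}\cdot\phi(q)\cdot O(\log q)=O(\log q)$, not $O(\log q/\phi(q))$; to get the latter you would need near-total cancellation across the character sums. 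Indeed, for $y$ as small as $q^{2}$ your claimed estimate asserts an asymptotic count (in the $\log p/p$ weighting) of primes in a progression at scale $q^{O(1)}$, which is far beyond Siegel--Walfisz and even GRH; it would in particular imply a Linnik-type theorem with a small absolute exponent. Moreover, even if you substitute the correct, classical input $M(y)=\log y/\phi(q)+O(\log q)$, your partial summation starting at $t=q$ produces a final error of size $O(\log q)\int_q^x\frac{dt}{t(\log t)^2}=O(1)$, and $O(1)$ is much weaker than the target $O(\log(3q)/\phi(q))$ (which tends to $0$ as $q\to\infty$).

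The missing idea is a three-range decomposition in which Brun--Titchmarsh does the heavy lifting in the middle. After peeling off $p_{q,a}$: (a) the remaining $p\le q^2$ in the class are all $>q$, hence of the form $a+kq$, contributing $\le\sum_{k\le q}1/(kq)\ll(\log q)/q$; (b) for $q^2<p\le\exp(cq)$, the Brun--Titchmarsh bound $\pi(t;q,a)\ll t/(\phi(q)\log(t/q))$ and partial summation show the \emph{entire} sum over this range is $O(\log(3q)/\phi(q))$ --- and since $\log_2(\exp(cq))-\log_2(q^2)\ll\log q$, the corresponding piece of the main term is also $O(\log(3q)/\phi(q))$, so no asymptotic is needed here, only an upper bound of the right order; (c) only for $p>\exp(cq)$ does one invoke the Mertens/PNT-in-progressions input with its $O(\log q)$ error, which is now harmless because partial summation divides it by $\log t\ge cq$, giving a contribution $O((\log q)/q)\ll\log(3q)/\phi(q)$. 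Your write-up never isolates the medium range where an asymptotic is unavailable, and that is precisely where the lemma's strength (error $O(\log(3q)/\phi(q))$ rather than $O(1)$) has to be earned.
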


In particular: For all positive integers $q$ and all $x\ge 3$, we have $\sum_{p \le x,~p\equiv 1\pmod{q}} 1/p = \log_2{x}/\phi(q) + O(\log{(3q)}/\phi(q))$ (this estimate being trivial when $q > x$).

\begin{proof}[Proof of Proposition \ref{prop:slightgen} {\rm (}sketch{\rm )}] We write
\[ \sum_{\substack{p \le x \\ (p-1,q)=1 \\ s\nmid p-1}} \frac{1}{p} = \sum_{\substack{p \le x \\ (p-1,q)=1}} \frac{1}{p} - \sum_{\substack{p \le x \\ (p-1,q)=1 \\ s\mid p-1}} \frac{1}{p} \]
and proceed to estimate the two right-hand sums. By inclusion-exclusion and Lemma \ref{lem:PN},
\[ \sum_{\substack{p \le x \\ (p-1,q)=1}} \frac{1}{p} = \sum_{d \mid q} \mu(d) \sum_{\substack{p \le x \\ p\equiv 1\pmod{d}}} \frac{1}{p} = \log_2{x} \sum_{d\mid q} \frac{\mu(d)}{\phi(d)} + O\left(\sum_{d \mid q}|\mu(d)| \frac{\log(3d)}{\phi(d)}\right). \]
Similarly, 
\[ \sum_{\substack{p \le x \\ (p-1,q)=1 \\ s\mid p-1}} \frac{1}{p} = \sum_{d \mid q} \mu(d) \sum_{\substack{p \le x \\ p\equiv 1\pmod{sd}}} \frac{1}{p} = \frac{\log_2{x}}{\phi(s)} \sum_{d\mid q} \frac{\mu(d)}{\phi(d)} + O\left(\sum_{d \mid q} |\mu(d)| \frac{\log(3ds)}{\phi(sd)}\right). \]
Since $\sum_{d\mid q} \mu(d)/\phi(d) = \alpha(q)$, we have the main terms claimed in the proposition. As far as the errors, notice that $\log(3ds)/\phi(sd) \le (\log{(3d)}/\phi(d)) \cdot (\log(3s)/\phi(s)) \ll \log(3d)/\phi(d)$. Thus, the proof will be completed once it is shown that $\sum_{d \mid q} |\mu(d)| \log(3d)/\phi(d) \ll (\log_2(3q))^2$. But this elementary estimate is already worked out (in greater generality) at the end of the proof of \cite[Lemma 2.4]{PSR23}. (In the notation of that argument, we have $F(T) = T-1$, $D=1$, and $\nu(d)=1$ for all $d$.) 
\end{proof}

The last result we need can be found, in slightly different form, near the start of \cite[\S4]{PSR23}. For convenience of the reader, we include a simple proof (a variant of the ``Remark'' in \cite{PSR23}).

\begin{lem}\label{lem:congruencecount} Let $q$ be a positive integer coprime to $6$. For each integer $J\ge 2$ and each integer $r$ prime to $q$, 
\begin{multline*} \#\{(a_1,\dots,a_J)\bmod{q}: \gcd\bigg(\prod_{j=1}^{J} a_j(a_j-1),q\bigg)=1,~\prod_{j=1}^{J} (a_j-1) \equiv r\pmod{q}\} \\
= \frac{\phi_2(q)^{J}}{\phi(q)} \exp\left(O\bigg(\sum_{p\mid q} (p-2)^{1-J}\bigg)\right).\end{multline*} 
\end{lem}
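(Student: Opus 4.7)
The plan is to use orthogonality of Dirichlet characters modulo $q$ to convert the congruence constraint into a character sum, factor the result prime-by-prime via CRT, and evaluate the local pieces in closed form. Writing $N$ for the count in question,
\[
N \;=\; \frac{1}{\phi(q)}\sum_{\chi\bmod q}\overline{\chi(r)}\,T(\chi)^J, \qquad T(\chi) := \sum_{\substack{a\bmod q\\ (a(a-1),q)=1}} \chi(a-1),
\]
and, writing $\chi = \prod_{p^e\parallel q}\chi_p$, one has $T(\chi)=\prod_{p^e\parallel q}T_p(\chi_p)$ where, after the substitution $b = a-1$,
\[
T_p(\chi_p)\;=\;\sum_{\substack{b\in(\Z/p^e\Z)^*\\ b\not\equiv -1\pmod{p}}}\chi_p(b).
\]

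The heart of the argument is evaluating $T_p(\chi_p)$. For $\chi_p$ trivial it equals $\phi_2(p^e)=p^{e-1}(p-2)$. For $\chi_p$ non-trivial, the relation $\sum_{b\in(\Z/p^e\Z)^*}\chi_p(b)=0$ yields
\[
T_p(\chi_p)\;=\;-\sum_{\substack{b\in(\Z/p^e\Z)^*\\ b\equiv-1\pmod{p}}}\chi_p(b)\;=\;-\chi_p(-1)\sum_{h\in 1+p\Z/p^e\Z}\chi_p(h).
\]
The inner sum over principal units equals $p^{e-1}$ when $\chi_p|_{1+p\Z/p^e\Z}$ is trivial—equivalently, when $\chi_p$ is induced from a character modulo $p$—and vanishes otherwise. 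Since exactly $p-1$ characters of $(\Z/p^e\Z)^*$ are induced from $(\Z/p\Z)^*$, precisely $p-2$ non-trivial $\chi_p$ satisfy $|T_p(\chi_p)|=p^{e-1}$, while the remaining non-trivial characters contribute zero.

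Isolating $\chi=\chi_0$ gives the main term $\phi_2(q)^J/\phi(q)$, and the error is bounded by $\phi(q)^{-1}\sum_{\chi\neq\chi_0}|T(\chi)|^J$. Multiplying out the local formulas,
\[
\sum_{\chi}|T(\chi)|^J \;=\; \prod_{p^e\parallel q}\bigl(\phi_2(p^e)^J+(p-2)\,p^{(e-1)J}\bigr) \;=\; \phi_2(q)^J\prod_{p\mid q}\bigl(1+(p-2)^{1-J}\bigr),
\]
so the error contribution is at most $\tfrac{\phi_2(q)^J}{\phi(q)}\bigl(\prod_{p\mid q}(1+(p-2)^{1-J})-1\bigr)$. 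The hypothesis $(q,6)=1$ forces $p\ge 5$, and so each factor $1+(p-2)^{1-J}\le 1+3^{1-J}<2$; combined with the elementary inequality $\prod_p(1+x_p)\le\exp(\sum_p x_p)$, this repackages the estimate as $\tfrac{\phi_2(q)^J}{\phi(q)}\exp\bigl(O\bigl(\sum_{p\mid q}(p-2)^{1-J}\bigr)\bigr)$, as claimed.

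The main technical wrinkle is the closed-form evaluation of $T_p(\chi_p)$ for non-trivial $\chi_p$: one must spot the right rewriting via global orthogonality and then exploit the extension structure of $(\Z/p^e\Z)^*$ by its principal units $1+p\Z/p^e\Z$. Once that local computation is in hand, the rest of the proof is routine character-sum bookkeeping.
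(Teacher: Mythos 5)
Your character-sum evaluation is exactly the paper's: the principal character gives $\phi_2(q)^J/\phi(q)$, and for nontrivial local characters the twisted sum reduces, via $\sum_b \chi_p(b)=0$, to a sum over the principal units $1+p\Z/p^e\Z$, which is $p^{e-1}$ for the $p-2$ nontrivial characters induced from mod $p$ and $0$ otherwise. That part is correct. The difference is that the paper applies the Chinese remainder theorem to the \emph{count} first and only then uses orthogonality at each prime power, whereas you apply orthogonality globally mod $q$ and take absolute values of the full sum at the end.

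That last step is where there is a genuine gap. Your bound gives
\[
\Bigl|N-\tfrac{\phi_2(q)^J}{\phi(q)}\Bigr|\;\le\;\tfrac{\phi_2(q)^J}{\phi(q)}\Bigl(\prod_{p\mid q}\bigl(1+(p-2)^{1-J}\bigr)-1\Bigr),
\]
and the conclusion $N=\tfrac{\phi_2(q)^J}{\phi(q)}\exp\bigl(O(\Sigma)\bigr)$ with $\Sigma=\sum_{p\mid q}(p-2)^{1-J}$ asserts in particular a \emph{lower} bound $N\ge \tfrac{\phi_2(q)^J}{\phi(q)}e^{-C\Sigma}>0$. Your inequality only yields $N\ge \tfrac{\phi_2(q)^J}{\phi(q)}\bigl(2-\prod_{p\mid q}(1+(p-2)^{1-J})\bigr)$, which is vacuous (indeed negative) as soon as the product exceeds $2$. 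This happens for legitimate inputs: take $J=2$ and $q$ a product of many primes $\ge 5$, so that $\Sigma=\sum_{p\mid q}1/(p-2)$ is large and $\prod_{p\mid q}(1+1/(p-2))\gg 1$. The observation that each individual factor is $<2$ does not control the product, and $\prod(1+x_p)-1$ is not $O(\sum x_p)$ when $\sum x_p$ is not small. (In the paper's application $J\to\infty$ so $\Sigma=o(1)$ and this never bites, but the lemma is stated for all $J\ge2$.) The repair is short and you already have all the ingredients: the count itself factors by CRT as $N=\prod_{p^e\parallel q}N_{p^e}$, and at each prime power the triangle inequality gives $N_{p^e}=\tfrac{\phi_2(p^e)^J}{\phi(p^e)}\bigl(1+\theta_p(p-2)^{1-J}\bigr)$ with $|\theta_p|<1$; since $(p-2)^{1-J}\le 1/3$, each local factor is $\exp\bigl(O((p-2)^{1-J})\bigr)$, and multiplying over $p\mid q$ gives the stated two-sided estimate.
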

\begin{proof} By the Chinese remainder theorem, we can assume $q=p^k$ is a prime power with $p\ge 5$. Orthogonality of Dirichlet characters implies that the cardinality in question is 
\[ \frac{1}{\phi(p^k)} \sum_{\chi \bmod{p^k}} \left(\sum_{a\bmod p^k}\chi_0(a) \chi(a-1)\right)^J \bar{\chi}(r). \]
Here the principal character $\chi=\chi_0$ contributes $\phi_2(p^k)^{J}/\phi(p^k)$. For $\chi \ne \chi_0$, the complete sum $\sum_{a\bmod p^k} \chi(a-1)=0$, so that
\[ \sum_{a\bmod p^k}\chi_0(a) \chi(a-1) = -\sum_{\substack{a\bmod p^k \\ a\equiv 0\pmod{p}}} \chi(a-1)= -\chi(-1) \sum_{\substack{b\bmod p^k \\ b\equiv 1\pmod{p}}} \chi(b). \]
The residue classes $b\bmod{p^k}$ with $b\equiv 1\pmod{p}$ form an index $p-1$ subgroup of the multiplicative group mod $p^k$. Thus, the sum on $b$ vanishes unless the restriction of $\chi$ to this subgroup is trivial. This happens for precisely $(p-1)-1=p-2$ nontrivial characters, and in these cases $\sum_{a\bmod{p^k}} \chi_0(a)\chi(a-1)$ has absolute value $p^{k-1}$. The triangle inequality thus implies that 
\[ \frac{1}{\phi(p^k)} \left|\sum_{\substack{\chi \bmod{p^k} \\ \chi\ne \chi_0}} \left(\sum_{a\bmod p^k}\chi_0(a) \chi(a-1)\right)^J \bar{\chi}(r)\right| \le \frac{1}{\phi(p^k)} (p-2) p^{(k-1)J}. \] Putting this estimate back above, our tuple count is $\frac{\phi_2(p^k)^J}{\phi(p^k)}(1+\theta (p-2)^{1-J})$, for some real number $\theta$ with $|\theta| < 1$. As $1+\theta (p-2)^{1-J} = \exp(O((p-2)^{1-J}))$, the lemma follows.
\end{proof}

In our application of Lemma \ref{lem:congruencecount} we will have that $J\to\infty$. Thus, $\sum_{p \mid q} (p-2)^{1-J} = o(1)$, leading to a tuple count that is $\sim \phi_2(q)^J/\phi(q)$.

\section{Uniformity up to $(\log{x})^{A}$ when $(q,6)=1$: Proof of Theorem \ref{thm:WUD2}}\label{sec:WUD2}

In what follows, $x$ is assumed to be a large real number, $q$ is an odd positive integer with $q\le (\log{x})^{A}$, where $A>0$ is fixed, and $a\bmod{q}$ is a coprime residue class. Unless otherwise stated, asymptotic estimates refer to behavior as $x\to\infty$ and are to be read as uniform in the choice of $a\bmod{q}$. Implied constants may depend on $A$ but on no other parameters unless explicitly noted. 
 
We begin the proof of Theorem \ref{thm:WUD1}  by restricting our attention to inputs $n$ whose large prime factors are suitably well-behaved. We let
\[ J:= \lfloor \log\log\log{x}\rfloor.\]
(Any integer-valued function of $x$ tending to infinity sufficiently slowly would do just as well.) We also set
\[ y: = \exp(\sqrt{\log{x}}). \] Following \cite[\S3]{PSR23} (choosing $\delta:=1$ in the notation of that paper), a positive integer $n\le x$ is called \textsf{convenient} if $P_J(n) > y$ and none of the primes $P_1(n), P_{2}(n), \dots, P_J(n)$ have squares dividing $n$. That is, $n$ is convenient when $n$ admits a decomposition
\begin{equation}\label{eq:convenientdecomposition} n = mP_J \cdots P_1, \quad\text{with}\quad L_m:=\max\{P^{+}(m),y\} < P_J < \cdots < P_1. \end{equation}

Write $N = N(x,q)$ for the total count of $n\le x$ with $(\lambda(n),q)=1$. The following lemma, proved as Lemma 3.1 in \cite{PSR23}, shows that this count does not change very much if we restrict to convenient $n$. 

\begin{lem}\label{lem:inconvenient0} The number of inconvenient $n\le x$ with $(\lambda(n),q)=1$ is $o(N)$. 
\end{lem}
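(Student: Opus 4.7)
The plan is to split the inconvenient $n\le x$ into two classes and bound each against the lower bound $N \gg x/(\log x)^{1-\alpha+o(1)}$, which follows from Proposition~\ref{prop:sumcoprime} on noting that $\log_2(3q) \ll \log_3 x$ in our range and hence $\exp(O((\log_2(3q))^2))$ is absorbed into $(\log x)^{o(1)}$. The coprimality constraint $(\lambda(n),q)=1$ plays no essential role in the upper bound: the estimates will be strong enough that even the trivial count (ignoring coprimality) is $o(N)$.

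\emph{Class (a): $P_J(n) \le y$.} Such $n$ factor as $n = mt$ with $m$ the $y$-smooth part and $t = p_1\cdots p_k$ a product of $k \le J-1$ primes exceeding $y$, counted with multiplicity. Summing first over $t$ and bounding the number of $y$-smooth $m \le x/t$ by $\Psi(x,y)$ (the count of $y$-smooth integers up to $x$), one obtains
\[ \#\{n\le x: P_J(n)\le y\} \le (1+o(1))\sum_{k=0}^{J-1}\frac{\Psi(x,y)}{k!}\Bigl(\sum_{y<p\le x}\frac{1}{p}\Bigr)^{k}. \]
By Mertens the inner sum is $\tfrac{1}{2}\log_2 x + O(1)$, so the combinatorial prefactor is at most $\exp(O((\log_3 x)^2))$. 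The Dickman bound applied with $u = \log x/\log y = \sqrt{\log x}$ gives $\Psi(x,y) \le x\exp(-(\tfrac{1}{2}+o(1))\sqrt{\log x}\log_2 x)$. Hence class (a) contributes at most $x\exp(-(\tfrac{1}{2}+o(1))\sqrt{\log x}\log_2 x) = o(x/(\log x)^C)$ for every fixed $C$, and in particular $o(N)$.

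\emph{Class (b): $P_J(n) > y$ but $p^2\mid n$ for some $p = P_j(n)$ with $1\le j\le J$.} Any such $p$ must exceed $y$, so the count is at most
\[ \sum_{p>y}\#\{n\le x : p^2\mid n\} \ll x\sum_{p>y}\frac{1}{p^2}\ll \frac{x}{y\log y} = x\exp\bigl(-(1+o(1))\sqrt{\log x}\bigr), \]
again well below $N$.

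\emph{Main obstacle.} The only step that requires real care is the class (a) upper bound, where the $1/k!$ savings from unordered prime tuples must be realized cleanly. The cleanest route is induction on $k$: iteratively peel off $P_1(n)$, bound $\Psi(x/p,y) \le \Psi(x,y)$, and apply Mertens uniformly, with a small amount of extra bookkeeping to handle possible repetitions among the large primes (negligible since $\sum_{p>y} 1/p^2$ is tiny). Because the Dickman savings of size $\exp(\Omega(\sqrt{\log x}\log_2 x))$ dwarfs the combinatorial loss of $\exp(O((\log_3 x)^2))$, no delicate cancellation is needed anywhere in the argument.
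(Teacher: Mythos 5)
Your class (b) estimate is fine, but class (a) contains a fatal error, and the guiding principle that the coprimality condition ``plays no essential role'' is false.

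The displayed bound for class (a) does not follow from the stated reasoning and is in fact far from true. To produce the factor $(\sum_{y<p\le x}1/p)^k$ after ``summing over $t$'' you are implicitly using $\Psi(x/t,y)\le \Psi(x,y)/t$, i.e., that the density of $y$-smooth numbers up to $X$ does not increase as $X$ decreases. The opposite holds: $\Psi(X,y)/X$ grows as $X$ shrinks (it equals $1$ for $X\le y$), so for $t$ near $x/y$ one has $\Psi(x/t,y)\approx x/t$ while $\Psi(x,y)/t$ is smaller by a factor of about $\exp(\tfrac12\sqrt{\log x}\,\log_2 x)$. Indeed the set in class (a) is not small at all: already for $J=2$, taking $n=mP$ with $m$ a $y$-smooth number up to $y$ and $P$ a prime in $(y,x/m]$ exceeding $P^{+}(m)$ gives
\[ \#\{n\le x: P_2(n)\le y\}\ \gg\ \frac{x}{\log x}\sum_{m\le y}\frac 1m\ \gg\ \frac{x\log y}{\log x}\ =\ \frac{x}{\sqrt{\log x}}, \]
which exceeds your claimed bound $x\exp(-(\tfrac12+o(1))\sqrt{\log x}\,\log_2 x)$ by an enormous factor. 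The point is that only the $y$-smooth \emph{part} of $n$ is constrained, and that part is typically tiny, so no Dickman-type savings is available.

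This true order of magnitude also kills the strategy of ignoring $(\lambda(n),q)=1$: for $q=3$ one has $\alpha=1/2$ and $N\asymp x/(\log x)^{1/2}$, and for $q$ with many small prime factors $N$ can be as small as $x/(\log x)^{1-o(1)}$; in neither case is $x/\sqrt{\log x}$ equal to $o(N)$. The coprimality condition is exactly what rescues the lemma: it forces every prime $p$ dividing the $y$-smooth part $m$ of $n$ to satisfy $(p-1,q)=1$, so that Proposition~\ref{prop:slightgen} gives $\sum 1/m\ll (\log x)^{\alpha/2}\exp(O((\log_2(3q))^2))$ in place of $\asymp(\log x)^{1/2}$. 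One then counts the at most $J-1$ remaining primes $>y$ by fixing $r=n/P^{+}(n)$ and applying a sieve or Brun--Titchmarsh to $P^{+}(n)$ (after first discarding, via \eqref{eq:cleanbound}, the negligibly many $n$ whose $y$-smooth part exceeds $x^{1/4}$, so that $\log(x/r)\gg (\log x)/J$). The resulting bound has the shape $\frac{x}{(\log x)^{1-\alpha/2}}\exp(O((\log_2(3q))^2+(\log_3 x)^2))$, and it is $o(N)$ precisely because $\alpha\log_2 x\gg \log_2 x/\log_3 x$ dominates the exponents in the error factors. This is the mechanism running through the paper's proofs of Lemmas~\ref{lem:inconvenientsolutions} and~\ref{lem:fewinconvenient}, and it is entirely absent from your argument.
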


Our next lemma asserts that there are few inconvenient solutions to $\lambda(n)\equiv a \pmod{q}$.

\begin{lem}\label{lem:inconvenientsolutions} The number of inconvenient $n\le x$ with $\lambda(n)\equiv a\pmod{q}$ is $o(N/\phi(q))$.
\end{lem}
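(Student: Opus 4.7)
My approach partitions the inconvenient $n\le x$ according to the structure of their prime factorizations above $y$, bounding each class separately; two classes admit crude bounds that already beat $N/\phi(q)$, while the main class requires exploiting the congruence $\lambda(n)\equiv a\pmod q$ to save the factor $\phi(q)^{-1}$. Every inconvenient $n$ lies in at least one of the following sets: (A) there is a prime $p>y$ with $p^{2}\mid n$; (B) $n$ is $y$-smooth; or (C) $P_{J}(n)\le y$, $n$ is not $y$-smooth, and no prime $p>y$ satisfies $p^{2}\mid n$.

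For set (A), $\#\{n\le x:\exists\,p>y,\,p^{2}\mid n\}\le x\sum_{p>y}p^{-2}\ll x/y=x\exp(-\sqrt{\log x})$; for set (B), the count is at most $\Psi(x,y)\le x\exp(-\tfrac{1}{2}\sqrt{\log x}\,\log_{2}x\,(1+o(1)))$. From Proposition \ref{prop:sumcoprime} together with $\phi(q)\le q\le(\log x)^{A}$, we have $N/\phi(q)\gg x(\log x)^{-A-1+o(1)}$, so both contributions are $o(N/\phi(q))$ with room to spare and no use of the congruence.

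For set (C), each $n$ factors uniquely as $n=smP$, where $P:=P^{+}(n)$, $s:=s_{y}(n)$, and $m:=n/(sP)$ is a squarefree product of the primes of $n$ strictly between $y$ and $P$; if $r$ denotes the number of distinct primes of $n$ exceeding $y$, then $1\le r<J$ and $m$ has $r-1$ prime factors. Since we are in (C) rather than (A), $P\nmid sm$, so \eqref{eq:lambdalcmrelation} yields $\lambda(n)=\lcm(\lambda(sm),P-1)$. Setting $A:=\lambda(sm)$ and assuming $(A,q)=1$, the condition $\lambda(n)\equiv a\pmod q$ is equivalent to $(P-1,q)=1$ together with $A(P-1)/(A,P-1)\equiv a\pmod q$. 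Stratifying by $d:=(A,P-1)$ pins $P$ to a single coprime residue class modulo $qd$ for each $d\mid A$. Applying Brun--Titchmarsh to each such class in the range $P\le x/(sm)$ (handling the large-$d$ tail by the trivial point count $\le x/(smqd)+1$) gives a savings factor $\phi(q)^{-1}$ over the unrestricted count of primes $P$, up to a multiplicative factor $\sum_{d\mid A}\phi(d)^{-1}$. Summing over $(s,m)$ and invoking Lemma \ref{lem:inconvenient0} to bound the total class-(C) count by $o(N)$ then yields the desired $o(N/\phi(q))$.

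The hardest part will be keeping the divisor sums $\sum_{d\mid A}\phi(d)^{-1}$ and the Brun--Titchmarsh tail under control on average as $(s,m)$ varies. The sum $\sum_{d\mid A}\phi(d)^{-1}$ is multiplicative in $A$ and bounded by $\prod_{p\mid A}(1+O(1/p))$, which is $(\log x)^{o(1)}$ for typical $A=\lambda(sm)$; atypical $(s,m)$, for which $\omega(\lambda(sm))$ is abnormally large, form a sparse set by Tur\'an-type concentration bounds and can be absorbed into the error. The analogous tail issue for $d$ exceeding $(x/(sm))^{1/2}/q$ is similarly addressed, using that $Y:=x/(sm)\ge y=\exp(\sqrt{\log x})$ forces $Y^{1/2}$ to dominate any polylogarithmic factor coming from $\tau(\lambda(sm))$ for typical $(s,m)$.
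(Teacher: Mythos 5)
Your skeleton (write $n=rP$ with $P=P^{+}(n)$, observe that $\lambda(n)\equiv a\pmod q$ pins $P$ to a single class modulo $qd$ where $d=(\lambda(r),P-1)$, and apply Brun--Titchmarsh to save $\phi(q)^{-1}$) is exactly the paper's, and your treatment of classes (A) and (B) is fine. But the argument for class (C) has two genuine gaps. First, the endgame does not close. You cannot simply "invoke Lemma \ref{lem:inconvenient0}": your per-$(s,m)$ upper bound is of the shape $\pi(x/(sm))\cdot(\log_2 x)^{O(1)}/\phi(q)$, whereas the quantity Lemma \ref{lem:inconvenient0} controls is $\sum_{(s,m)}\#\{P\in(\max(P^+(sm),y),\,x/(sm)]:(P-1,q)=1\}$; for pairs with $P^+(sm)$ near $x/(sm)$ the latter can vanish while your bound does not, so there is no pointwise (or obvious averaged) comparison. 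If instead you sum your bound directly, you need $\sum_{(s,m)}\frac{x}{sm\log(x/(sm))}\cdot\frac{(\log_2x)^{O(1)}}{\phi(q)}=o(N/\phi(q))$. In class (C) the only available lower bound on $P$ is $P>y=\exp(\sqrt{\log x})$, so $\log(x/(sm))\ge\sqrt{\log x}$ is all you have; combined with $\sum_s 1/s\ll(\log x)^{\alpha/2+o(1)}$ (the $y$-smooth part) and $\sum_m 1/m\le\exp(O((\log_3x)^2))$, this gives $\frac{x}{\phi(q)}(\log x)^{\alpha/2-1/2+o(1)}$, which must be compared with $N/\phi(q)=\frac{x}{\phi(q)}(\log x)^{\alpha-1+o(1)}$ from \eqref{eq:sumcoprime}. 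The required inequality $\alpha/2-1/2<\alpha-1$ is equivalent to $\alpha>1$, which never holds. This is why the paper first discards $n$ with $P^{+}(n)\le x^{1/\log_2x}$ (condition (ii), via \eqref{eq:CEP}), forcing $\log(x/(sm))\gg\log x/\log_2 x$ and turning the exponent into $\alpha/2-1+o(1)$, which does beat $\alpha-1$ by $(\log x)^{\alpha/2}\ge\exp(c\log_2x/\log_3x)$ in view of \eqref{eq:alphalower}. You need this reduction, and you need to actually carry out the $\sum 1/(sm)$ computation; neither appears in your write-up.

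Second, stratifying over all divisors $d\mid\lambda(sm)$ forces you to control $\tau(\lambda(sm))$-type quantities in the large-$d$ tail. The trivial count $x/(smqd)+1$, summed over the up to $\tau(\lambda(sm))$ divisors $d>(x/(sm))^{1/2}/q$ and then over $(s,m)$, produces sums weighted by $\tau(\lambda(sm))$, whose \emph{typical} size is $\exp(c(\log_2x)^2)$ (since $\omega(\lambda(n))=\omega(\phi(n))\sim\tfrac12(\log_2n)^2$ normally) and whose average is larger still; a Tur\'an-type concentration statement bounds the measure of the atypical set but not the weighted sum over it, so "absorbed into the error" is not justified. (Your other worry is a non-issue: $\sum_{d\mid A}\phi(d)^{-1}\ll\exp(O(\sum_{p\mid A}1/p))\ll(\log_2x)^{O(1)}$ unconditionally.) The paper avoids divisor functions of $\lambda$ altogether: conditions (iii) and (iv) of its proof show that outside a negligible set of $n$ one has $(\lambda(r),P-1)$ both $\exp((\log_2x)^2)$-smooth and at most $x^{1/(3\log_2 x)}$, after which Brun--Titchmarsh always applies and the sum over $d$ runs over \emph{all} smooth integers, contributing only $(\log_2x)^{O(1)}$. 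You should either import those reductions or supply a genuine moment bound for $\tau(\lambda(\cdot))$; as written, class (C) is not under control.
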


For the proof of Lemma \ref{lem:inconvenientsolutions} (and subsequently), it is useful to note that \eqref{eq:alphalower} and \eqref{eq:sumcoprime}  imply (crudely) that $N \gg x/\log{x}$ in our range of $q$. 

In addition, it will be helpful to have at hand certain basic estimates from the theory of smooth numbers. Let $\Psi(X,Y)$ denote the count of $Y$-smooth numbers in $[1,X]$. Canfield, Erd\H{o}s, and Pomerance have shown \cite{CEP83} that if $X,Y$, and $U:=\frac{\log{X}}{\log{Y}}$ all tend to infinity, with $X\ge Y\ge (\log{X})^2$, then
\begin{equation}\label{eq:CEP} \Psi(X,Y)= X \exp(-(1+o(1)) U\log{U}). \end{equation}
Also, it is known (see \cite[Theorem 5.1, p.\ 512]{tenenbaum15}) that for all $X\ge Y \ge 2$,
\begin{equation}\label{eq:cleanbound} \Psi(X,Y) \ll X \exp(-U/2).\end{equation}

\begin{proof}[Proof of Lemma \ref{lem:inconvenientsolutions}] We may assume that
\begin{enumerate}
  \item[(i)] $n$ has no repeated prime factor exceeding $(\log{x})^{A+1}$,
\end{enumerate}
since the  number of $n\le x$ failing (i) is $o(x/(\log{x})^{A+1})$, which is $o(N/\phi(q))$ in our range of $q$. We may also assume that, with $y':=x^{1/\log_2{x}}$,
\begin{enumerate}
   \item[(ii)] $P^{+}(n) > y'$.
\end{enumerate}
Indeed, \eqref{eq:CEP} shows that (ii) holds for all but $x/(\log{x})^{(1+o(1))\log_3 x} = o(N/\phi(q))$ values of $n\le x$. We write each remaining inconvenient solution $n\le x$ to $\lambda(n)\equiv a\pmod{q}$ in the form $n=rP$, where $P=P^{+}(n)$. By (i) and (ii), we know that $P\nmid r$, and so 
\begin{equation}\label{eq:lambdaeq0} \lambda(n) = \lcm[\lambda(r),P-1] = \frac{\lambda(r) \cdot (P-1)}{(\lambda(r),P-1)}. \end{equation}

Continuing, let $y'' = \exp((\log_2{x})^2)$. We may further suppose that
\begin{enumerate}
  \item[(iii)] $(\lambda(r),P-1)$ is $y''$-smooth.
\end{enumerate}
Indeed, if (iii) fails choose a prime $\ell >y''$ for which $\ell$ divides $(\lambda(r),P-1)$.  By (i), $\ell^2 \nmid r$, and so there must be a prime $p$ dividing $r$ with $p\equiv 1\pmod{\ell}$. There are  at most $x/pP$ values of $n$ corresponding to a given pair of $p, P$. Summing $x/pP$ on \emph{all} primes $p, P\equiv 1\pmod{\ell}$ with $p,P\le x$ bounds the count of $n$ corresponding to a given $\ell$ by $x (\log_2{x})^2/\ell^2$. (We have bounded the sums on $p,P$ by partial summation and the Brun--Titchmarsh inequality.) Now summing on $\ell > y''$ shows that the number of exceptions to (iii) is $O(x(\log_2{x})^2/y'')$, which is $o(N/\phi(q))$. 

Continuing, we may suppose that
\begin{enumerate}
  \item[(iv)] $(\lambda(r),P-1) \le y'^{1/3}$.
\end{enumerate}
When (iv) fails, we use (iii) to decompose $P-1 = dh$, where $d$ is $y''$-smooth and $d > y'^{1/3}$. Since the number of possibilities for $n$ given $P$ is at most $x/P < x/dh$, summing on $h$ and $d$ bounds the number of exceptions to (iv) as 
\[ \ll x \log{x} \sum_{\substack{d > y'^{1/3} \\ d \text{ $y''$-smooth}}} \frac{1}{d}. \]
Summing by parts, \eqref{eq:cleanbound} gives that the sum on $d$ is $O(\log{y''} \cdot \exp(-\frac{1}{6}\frac{\log{y'}}{\log{y''}}))$. Thus, the number of $n$ where (iv) fails is $O(x \exp(-(\log{x})^{0.9}))$ (say), which is $o(N/\phi(q))$.

We now fix $d:=(\lambda(r),P-1)$ and count possibilities for $P$ given $r$ and $d$. We must have $(\lambda(r),q)=1$ (otherwise $(\lambda(n),q) > 1$, contradicting $\lambda(n)\equiv a\pmod{q}$) and hence also $(d,q)=1$. Furthermore, \[ P\equiv 1\pmod{d} \] and, from \eqref{eq:lambdaeq0}, 
\[ P-1 \equiv \frac{ad}{\lambda(r)} \pmod{q}. \]
(The right-hand side of the congruence makes sense mod $q$, since $(\lambda(r),q)=1$.) These two congruences put $P$ in a unique residue class mod $dq$. Since $y' < P \le x/r$ while $d, q < y'^{1/3}$, the Brun--Titchmarsh inequality gives that the number of possibilities for $P$ given $r, d$ is 
\[ \ll \frac{x}{\phi(dq) r \log(x/rdq)} \ll \frac{x \log_2{x}}{\phi(d) \phi(q) r \log{x}} \ll  \frac{x (\log_2{x})^2}{d \phi(q) r \log{x}},
\]
using in the last step that $d/\phi(d) \ll \log_{2}{d} \ll \log_{2}x$. Since $d$ is $y''$-smooth, it follows that the number of $P$ given $r$ is 
\begin{equation}\label{eq:Pgivenr} \ll \frac{x (\log_2{x})^2}{\phi(q) r \log{x}} \prod_{p \le y''} \left(1+\frac1p + \frac1{p^2} + \dots\right) \ll \frac{x (\log_2{x})^4}{\phi(q) r \log{x}}.
\end{equation}
Finally we sum on $r$. Write $r=r_1 r_2$, where $r_1$ is the $y$-smooth part of $r$. We bound $\sum 1/r$ by summing on all possibilities for $r_1$ and $r_2$. Since $n=r_1 r_2 P$ is inconvenient, it must be that $\Omega(r_2) < J$. Thus,
\begin{align*} \sum_{r} \frac{1}{r} &\le \left(1+\sum_{p \le x} (1/p + 1/p^2 + 1/p^3 +\dots)\right)^{J} \sum_{r_1} \frac{1}{r_1} \\
  &\le \exp(O((\log_3{x})^2)) \cdot \sum_{r_1} \frac{1}{r_1}. 
\end{align*}
Since $r_1$ is divisible only by primes $p \le y$ with $(p-1,q)=1$,
\begin{multline*} \sum_{r_1} \frac{1}{r_1} \le \prod_{\substack{p \le y \\ (p-1,q)=1}} \left(\sum_{j \ge 0}p^{-j}\right) \ll \exp\Bigg(\sum_{\substack{p \le y \\ (p-1,q)=1}}\frac{1}{p}\Bigg) \ll (\log{x})^{\frac{1}{2}\alpha} \exp(O((\log_2{(3q)})^2)),
\end{multline*}
using Proposition \ref{prop:slightgen} to estimate the final sum on $p$. Plugging these estimates back into \eqref{eq:Pgivenr}, we conclude that the number of inconvenient $n\le x$ with $\lambda(n)\equiv a\pmod{q}$ and all of (i)--(iv) being satisfied is 
\[ \ll \frac{x}{\phi(q)(\log{x})^{1-\frac{1}{2}\alpha}}  \exp(O((\log_2{(3q)})^2 + (\log_3{x})^2)). \]
Comparing with the estimate for $N$ from Proposition \ref{prop:sumcoprime}, and keeping in mind the lower bound \eqref{eq:alphalower} on $\alpha$, this is seen to be $o(N/\phi(q))$ (in fact, it is $O(N/\phi(q) \exp((\log_2{x})^{0.9}))$, say). 
\end{proof}

By Lemmas \ref{lem:inconvenient0} and \ref{lem:inconvenientsolutions}, Theorem \ref{thm:WUD2} will follow if it is shown that
\begin{multline}\label{eq:convenientreduced} \#\{\text{convenient $n\le x$}: \lambda(n)\equiv a\pmod{q}\} \\ = \frac{1}{\phi(q)}\#\{\text{convenient $n\le x$}: (\lambda(n),q)=1\} + o(N/\phi(q)). \end{multline}
Referring back to the description of convenient numbers appearing as \eqref{eq:convenientdecomposition}, the left-hand side of \eqref{eq:convenientreduced} can be rewritten as
\begin{equation}\label{eq:hugedoublesum}  \sum_{\substack{m\le x \\ (\lambda(m),q)=1}} \sum_{\substack{P_j, \dots, P_1 \\ L_m < P_J < P_{J-1} < \dots < P_1\\ P_J \cdots P_1  \le x/m \\ \lcm[\lambda(m),P_1-1,\dots,P_J-1]\equiv a\pmod{q}}} 1. \end{equation}

Proceeding futher requires that we tame the ornery-seeming lcm condition. To this end, we let
\[ w:= \exp((\log{x})^{1/8}) \]
and we show that most of the time 
\begin{equation}\label{eq:coprimecondition} \text{$\lambda(m), P_1-1, \dots, P_j-1$\quad are  pairwise coprime at primes $>w$,} \end{equation} meaning that no prime $\ell >w$ divides two terms from the list. Suppose there is such an $\ell$. Then either $n = mP_1\cdots P_J$ is divisible by $\ell^2$ or there are primes $p_1, p_2$ dividing $n$ with $p_1, p_2\equiv 1\pmod{\ell}$. The number of $n\le x$ satisfying the first condition is $O(x/\ell^2)$, while the count of $n\le x$ satisfying the second is $O(x(\log_2{x})^2/\ell^2)$ (cf.\ the handling of condition (iii) in the proof of Lemma \ref{lem:inconvenientsolutions}). Summing on $\ell>w$, we see that our coprimality condition excludes only $o(x/w)$ values of $n$, which is acceptable as $x/w = o(N/\phi(q))$. 

Recall our notation $s_w(\cdot)$ for the $w$-smooth part. When our coprimality condition \eqref{eq:coprimecondition} holds,
\begin{multline} \lcm[\lambda(m), P_1-1,\dots, P_J-1]
\\ = \lcm[s_w(\lambda(m)), s_w(P_1-1),\dots, s_w(P_J-1)] \frac{\lambda(m)}{s_w(\lambda(m))} \prod_{j=1}^{J} \frac{P_j-1}{s_w(P_j-1)}.\label{eq:newlcm} 
\end{multline}
So at the cost of an error of $o(N/\phi(q))$, we may  swap out the lcm congruence condition in \eqref{eq:hugedoublesum} for the condition that the right-hand side of \eqref{eq:newlcm} be congruent to $a$ modulo $q$. We then remove the ordering on the $P_j$, which introduces a factor of $1/J!$, and we partition the inner sum on $P_1,\dots, P_J$ according to the values of $d_j:=s_w(P_j-1)$. The upshot is as follows:  Let $\mathbf{d} = (d_1,\dots,d_J)$ and define rational numbers $r_{m,\textbf{d}}$ and $a_{m,\textbf{d}}$ by 
\begin{equation}\label{eq:radefs} r_{m,\textbf{d}} = \lcm[s_w(\lambda(m)),d_1,\dots,d_J] \frac{\lambda(m)}{s_w(\lambda(m))} \prod_{j=1}^{J}\frac{1}{d_j}, \quad a_{m,\textbf{d}} = a/r_{m,\textbf{d}}. \end{equation} Then the count described by \eqref{eq:hugedoublesum} is equal --- up to an error of $o(N/\phi(q))$ --- to 
\begin{equation}\label{eq:hugerdoublesum} \frac{1}{J!}\sum_{\substack{m\le x \\ (\lambda(m),q)=1}} \sum_{\substack{d_J,\dots,d_1\text{ $w$-smooth} \\ (d_j,q)=1~\forall j \\ 2\mid d_j~\forall j }}    \sum_{\substack{P_J, \dots, P_1 \text{ distinct} \\ L_m < P_j~\forall j\\ P_J \cdots P_1  \le x/m \\ s_w(P_j-1)=d_j~\forall j \\ \prod_{j=1}^{J} (P_j-1) \equiv a_{m,\textbf{d}} \pmod{q}}} 1. \end{equation}
(The subscripted condition $2\mid d_j$ could be omitted at this stage but will prove useful later.) Note that $a_{m,\textbf{d}}$ determines a well-defined coprime residue class mod $q$, since $a_{m,\textbf{d}}$ is a rational number with numerator and denominator prime to $q$. 

Following \cite{PSR23}, we view the condition on $\prod_{j=1}^{J} (P_j-1)$ in \eqref{eq:hugerdoublesum} as cutting out a collection of possible tuples $(P_1,\dots,P_J)$ mod $q$. Let
\begin{equation}\label{eq:vmdef} \V_{m,\bf{d}}(q) = \{(a_1,\dots,a_J)\bmod{q}: \gcd\bigg(\prod_{i=1}^{J} a_j(a_j-1),q\bigg)=1, \prod_{j=1}^{J} (a_j-1) \equiv a_{m,\textbf{d}}\bmod{q}\}. \end{equation}
Then \eqref{eq:hugerdoublesum} can be rewritten as
\begin{equation}\label{eq:hugefundamental} \frac{1}{J!}\sum_{\substack{m \le x \\ (\lambda(m),q)=1}} \sum_{\substack{d_J,\dots,d_1\text{ $w$-smooth} \\ (d_j,q)=1~\forall j \\ 2\mid d_j~\forall j }} \sum_{\v \in \V_{m,\textbf{d}}(q)}   \sum_{\substack{P_J, \dots, P_1 \text{ distinct}\\ L_m < P_j~\forall j\\ P_J \cdots P_1  \le x/m \\ s_w(P_j-1)=d_j~\forall j \\ P_j \equiv a_j\bmod{q}~\forall j}} 1; \end{equation}
this expression \eqref{eq:hugefundamental} will serve as our basic approximation to the left-hand side of \eqref{eq:convenientreduced}.

Analogous manipulations reveal that the count of convenient $n\le x$ with $(\lambda(n),q)=1$ is (precisely) equal to
\begin{equation}\label{eq:hugefundamental-tocompare}  \frac{1}{J!}\sum_{\substack{m \le x \\ (\lambda(m),q)=1}} \sum_{\substack{d_J,\dots,d_1\text{ $w$-smooth} \\ (d_j,q)=1~\forall j \\ 2\mid d_j~\forall j }} \sum_{\substack{P_J, \dots, P_1 \text{ distinct}\\ L_m < P_j~\forall j\\ P_J \cdots P_1  \le x/m \\ s_w(P_j-1)=d_j~\forall j}} 1. \end{equation}
Rather than estimate \eqref{eq:hugefundamental} or \eqref{eq:hugefundamental-tocompare} directly, our strategy to prove \eqref{eq:convenientreduced} is to directly compare \eqref{eq:hugefundamental} and \eqref{eq:hugefundamental-tocompare}.

View \eqref{eq:hugefundamental} as having the sum on $P_1$ innermost. We now argue that (in a precise sense) on average over the $m,\mathbf{d}, \mathbf{v}$, and $P_J,\dots,P_2$ from \eqref{eq:hugefundamental},
\begin{equation}\label{eq:innersumapprox} \sum_{\substack{L_m < P_1 \le x/m P_2 \cdots P_{J} \\ P_1 \ne P_2,\dots, P_J \\ P_1 \equiv a_1\pmod{q} \\ s_w(P_1-1) = d_1}} 1 \approx \frac{1}{\phi_2(q)}\sum_{\substack{L_m < P_1 \le x/m P_2 \cdots P_{J} \\ P_1 \ne P_2,\dots, P_J \\ s_w(P_1-1) = d_1}} 1. \end{equation}  The left-hand side of \eqref{eq:innersumapprox} is the sum on $P_1$ in \eqref{eq:hugefundamental}; thus, we are asserting with this approximation that the mod $q$ congruence condition on $P_1$ in \eqref{eq:hugefundamental} can essentially be removed by insertion of a factor of $1/\phi_2(q)$. To argue this will assume that \begin{equation}\label{eq:mayassume} L_m < x/mP_2\cdots P_j; \end{equation} otherwise \eqref{eq:innersumapprox} is (trivially) an equality.

To save on notation, write $d$ for $d_1$ and $a$ for $a_1$. We start by estimating for each $T \ge y$ the count of $p\le T$ with $p\equiv a\pmod{q}$ and $s_w(p-1)=d$.  This is a sieve problem: We start with the set $\mathcal{P}$ of primes $p\le T$ with $p\equiv a\pmod{q}$ and $p\equiv 1\pmod{d}$, which we view as having approximate size $\mathcal{X}:= \Li(T)/\phi(d)\phi(q)$. To enforce the condition $s_w(p-1)=d$, we remove (sieve out) those $p \in \mathcal{P}$ for which $\frac{p-1}{d}$ is divisible by a prime at most $w$. By the definition of $\V_{m,\textbf{d}}(q)$, the integer $a-1$ is coprime to $q$, and so any prime divisor of $\frac{p-1}{d}$ must be coprime to $q$. Thus, we need only sieve by those primes up to $w$ not dividing $q$. 

Let $e$ be a squarefree, $w$-smooth number with $(e,q)=1$. The count of $p\in \mathcal{P}$ for which $e\mid \frac{p-1}{d}$ is precisely the count of primes up to $T$ belonging to a certain  coprime progression mod $deq$; it is therefore
\begin{equation}\label{eq:Aeest} \frac{\Li(T)}{\phi(de) \phi(q)} + O(E(T;q de)), \end{equation}
where (anticipating an  application of the Bombieri--Vinogradov theorem) we have set
\[ E(X;M) := \max_{2\le Y \le X} \max_{\substack{A \bmod{M} \\ (A,M)=1}} \left|\pi(Y;M,A)-\frac{\Li(Y)}{\phi(M)}\right|. \]
Setting $g(e):= \phi(d)/\phi(de)$, the main term in \eqref{eq:Aeest} is $\mathcal{X}g(e)$. Now $g$ is multiplicative in $e$, and $g$ satisfies the Iwaniec condition (with $\kappa=1$): If $\frac{3}{2} \le w_1 \le w_2 \le w$, then $\prod_{w_1 < p \le w_2,~p\nmid q}(1-g(p))^{-1} \le (1+O(1/\log{w_1})) \frac{\log{w_2}}{\log{w_1}}$, where the implied constant is absolute.  So we are set up to apply the Fundamental Lemma of the Sieve (see, e.g., \cite[Theorem 18.11, p.\ 190]{K19}), which yields
\begin{multline}\label{eq:sievebound} \#\{p\le T: p\equiv a \pmod{q},~s_w(p-1)=d\} \\ = \Bigg(\frac{\Li(T)}{\phi(d) \phi(q)} \prod_{\substack{p \le w \\ p\nmid q}}(1-g(p))\Bigg)  \left(1+O(u^{-u/2})\right) + O\left(\sum_{e\le w^{u}} E(T;qd e)\right), \end{multline}
where $u\ge 1$ is a parameter at our disposal. Letting
\[ z:= \exp((\log{x})^{1/4}), \] 
we choose $u$ so that $w^u=z$ (i.e., $u=(\log{x})^{1/8}$). Inserting this choice of $u$ and simplifying, the right-hand side of \eqref{eq:sievebound} becomes
\begin{multline}\label{eq:sieveoutput}\left(\frac{1}{2}\prod_{2 < p \le w}\frac{p-2}{p-1}\right) \frac{\Li(T)}{\phi(d)\phi_2(q)} \cdot \frac{\phi(d_{\odd})^2}{d_{\odd} \phi_2(d_\odd)}  \\ + O\bigg(\frac{T}{\phi(d)\phi(q)} \exp(-(\log{x})^{1/8}) + \sum_{e\le z} E(T;q d e)\bigg), \end{multline}
where $d_{\odd}$ denotes the largest odd divisor of $d$. 

If we forget the mod $q$ congruence condition on $p$ and  estimate the total number of $p\le T$ with $s_w(p-1)=d$, we obtain an identical estimate to \eqref{eq:sieveoutput}, but with all occurrences of $q$ replaced by $1$. Thus, writing $\Delta=\Delta(m,\mathbf{d},\mathbf{v}, P_J,\dots,P_2)$ for the magnitude of the difference between the two sides of \eqref{eq:innersumapprox}, 
\begin{equation}\label{eq:deltabound1} \Delta \ll \Delta_1 + \Delta_2 + \Delta_3,
\end{equation}
where
\begin{multline*} \Delta_1 =  J+ \frac{x \exp(-(\log{x})^{1/8})}{m P_2\cdots P_{J} \phi(d) \phi_2(q)}, \quad
\Delta_2 = \sum_{e \le z} E\bigg(\frac{x}{m P_2 \cdots P_{J}};qde\bigg),\quad\text{and} \\
\Delta_3 = \frac{1}{\phi_2(q)} \sum_{e \le z} E\bigg(\frac{x}{m P_2 \cdots P_{J}};de\bigg
).\end{multline*}
When $d$ is large, the bound
\begin{equation}\label{eq:deltabound2} \Delta \ll x/m d P_2\cdots P_J \end{equation}
will be more useful than \eqref{eq:deltabound1}; \eqref{eq:deltabound2} is obvious, since the count of primes $1$ mod $d$ to $x/mP_2\cdots P_J$ is smaller than $x/md P_2\cdots P_J$.

Let us show, using \eqref{eq:deltabound1} and \eqref{eq:deltabound2}, that
\begin{equation}\label{eq:deltasumbound} \sum_{m,\mathbf{d},\mathbf{v},P_J,\dots,P_2} \Delta(m,\mathbf{d},\mathbf{v},P_J,\dots,P_2) \ll x/(\log{x})^{A+2}, \end{equation}
where $m,\mathbf{d},\mathbf{v},P_J,\dots,P_2$ are restricted by the conditions of summation in \eqref{eq:hugefundamental} and by  \eqref{eq:mayassume}.

We take first the cases when $d \le z$. Here we appeal to \eqref{eq:deltabound1}, considering separately the contributions from $\Delta_1, \Delta_2, \Delta_3$. It is easy to handle $\Delta_1$: Since $x/mP_2\cdots P_J > L_m \ge y$, we have $J \ll \frac{x \exp(-(\log{x})^{1/8})}{m P_2\cdots P_{J} \phi(d) \phi_2(q)}$. Summing $\frac{x \exp(-(\log{x})^{1/8})}{m P_2\cdots P_{J} \phi(d) \phi_2(q)}$ on all $m\le x$, all primes $P_2,\dots,P_J \le x$ and all $d\le z$ gives a quantity of size $O(\frac{x}{\phi_2(q)}\exp(-(\log{x})^{1/9}))$. Since $P_2,\dots,P_J$ and $d$ together determine $\mathbf{d}$ and the components $a_2,\dots,a_{J}$ of $\mathbf{v}$, and since there are at most $\phi_2(q)$ choices for $a_1 \bmod{q}$, we conclude that $\Delta_1$ contributes only $O(x \exp(-(\log{x})^{1/9}))$ to the left-hand side of \eqref{eq:deltasumbound}. 

Since $\phi_2(q) \le q \le (\log{x})^{A}$, the $\Delta_2$ piece will be satisfactorily handled if
\begin{equation}\label{eq:willbe} \sum_{m \le x} \sum_{\substack{P_2,\dots,P_J \\ P_2 \cdots P_J \le x/my}} \sum_{d\le z} \sum_{e \le z} E\bigg(\frac{x}{m P_2 \cdots P_{J}};qde\bigg) = O(x/(\log{x})^{2A+2}). \end{equation}
(We use again that $P_2,\dots,P_J$ and $d$ determine $\mathbf{d}$ and $a_2,\dots,a_{J}$. Note that the restrictions in \eqref{eq:willbe} on $m, P_2, \dots, P_J$, and $d$ are implied by the conditions of summation in \eqref{eq:hugefundamental}, the assumption \eqref{eq:mayassume}, and our present stipulation that $d\le z$.) Trivially, $E(x/m P_2\cdots P_J; M) \ll x/\phi(M) m P_2\cdots P_J$ whenever $M\le x/m P_2\cdots P_J$. So by Cauchy--Schwarz and the Bombieri--Vinogradov theorem (bearing in mind that $qz^2 < y^{1/3} \le (x/mP_2\cdots P_J)^{1/3}$), 
\begin{align*} \sum_{d, e \le z} E\bigg(\frac{x}{m P_2 \cdots P_{J}};qde\bigg) &\le \sum_{M \le q z^{2}} \tau(M) \cdot E\bigg(\frac{x}{m P_2 \cdots P_{J}};M\bigg) \\ 
&\ll \Bigg(\frac{x}{m P_2\cdots P_J} \sum_{M \le qz^2} \frac{\tau(M)^2}{\phi(M)}\Bigg)^{1/2} \left(\sum_{M \le qz^2} E\bigg(\frac{x}{m P_2 \cdots P_{J}};M\bigg) \right)^{1/2} \\
&\ll \Bigg(\frac{x (\log{x})^4}{m P_2\cdots P_J}\Bigg)^{1/2} \Bigg(\frac{x}{m P_2\cdots P_J (\log{x})^{4A+12}}\Bigg)^{1/2}, \end{align*}
which is $O((x/mP_2\cdots P_J) (\log{x})^{-2A-4})$. Summing on $m\le x$ and $P_2, \dots, P_J\le x$, this is $O(x/(\log{x})^{2A+2})$, which is acceptable for \eqref{eq:willbe}. The errors induced by $\Delta_3$ can be treated entirely analogously; we leave the tedious but inglorious details to the reader. 

To handle the contributions to \eqref{eq:deltasumbound} from cases where $d> z$, it is enough (appealing now to \eqref{eq:deltabound2}) to show that
\begin{equation}\label{eq:largedpieces}\sum_{m \le x} \sum_{P_2,\dots,P_J \le x} \sum_{\substack{d> z \\ P^{+}(d)\le w}} \frac{x}{md P_2\cdots P_J}= O(x/(\log{x})^{2A+2}). \end{equation}
By \eqref{eq:cleanbound}, $\sum_{d > z,~P^{+}(d)\le w} 1/d \ll (\log{w}) \exp(-\frac{1}{2}(\log{x})^{1/8})$. This yields \eqref{eq:largedpieces} after crudely bounding the sums on $m, P_2,\dots, P_J$. Thus, we have \eqref{eq:deltasumbound}.

We conclude from \eqref{eq:deltasumbound} that
\begin{multline*} 
  \frac{1}{J!}\sum_{\substack{m \le x \\ (\lambda(m),q)=1}} \sum_{\substack{d_J,\dots,d_1\text{ $w$-smooth} \\ (d_j,q)=1~\forall j \\ 2\mid d_j~\forall j }} \sum_{\v \in \V_{m,\textbf{d}}(q)}   \sum_{\substack{P_J, \dots, P_1 \text{ distinct}\\ L_m < P_j~\forall j\\ P_J \cdots P_1  \le x/m \\ s_w(P_j-1)=d_j~\forall j \\ P_j \equiv a_j\bmod{q}~\forall j}} 1 \\ = \frac{1}{\phi_2(q)}\frac{1}{J!}\sum_{\substack{m \le x \\ (\lambda(m),q)=1}}  \sum_{\substack{d_J,\dots,d_1\text{ $w$-smooth} \\ (d_j,q)=1~\forall j \\ 2\mid d_j~\forall j }} \sum_{\v \in \V_{m,\textbf{d}}(q)}   \sum_{\substack{P_J, \dots, P_1 \text{ distinct}\\ L_m < P_j~\forall j\\ P_J \cdots P_1  \le x/m \\ s_w(P_j-1)=d_j~\forall j \\ P_j \equiv a_j\bmod{q}~\forall j\ge 2}} 1 + O(x/(\log{x})^{A+2}).
\end{multline*}
In fact, our arguments justify an error term of $O(x/J! (\log{x})^{A+2})$, but the extra factor of $J!$ is otiose here. 

Proceeding in exactly the same manner, we can successively remove the mod $q$ congruence conditions on $P_2, P_3, \dots, P_J$. At each step, we introduce a new factor of $1/\phi_2(q)$ and a new error of size $O(x/(\log{x})^{A+2})$. (Actually the error introduced after the $j$th step could be estimated as $O(x/J! \phi_2(q)^{j-1}(\log{x})^{A+2})$, but $O(x/(\log{x})^{A+2})$ is all we need.) Since $Jx/(\log{x})^{A+2} = o(N/\phi(q))$, we  make the following deduction: Up to an error of $o(N/\phi(q))$, the count of convenient $n\le x$ with $\lambda(n)\equiv a\pmod{q}$ is 
\begin{equation}\label{eq:conditionsremoved} \frac{1}{\phi_2(q)^J}\frac{1}{J!}\sum_{\substack{m \le x \\ (\lambda(m),q)=1}} \sum_{\substack{d_J,\dots,d_1\text{ $w$-smooth} \\ (d_j,q)=1~\forall j \\ 2\mid d_j~\forall j }} \sum_{\substack{P_J, \dots, P_1 \text{ distinct}\\ L_m < P_j~\forall j\\ P_J \cdots P_1  \le x/m \\ s_w(P_j-1)=d_j~\forall j}} \#\V_{m,\textbf{d}}(q). \end{equation}

Up to this point, it was only necessary to assume that $q$ is odd. We now tack on the hypothesis that $3\nmid q$. Lemma \ref{lem:congruencecount} then implies that $\#\V_{m,\textbf{d}}(q) = (1+o(1)) \frac{\phi_2(q)^{J}}{\phi(q)}$, uniformly in the values of $m$ and $\textbf{d}$ from \eqref{eq:conditionsremoved}. Hence, the size of \eqref{eq:conditionsremoved} is $(1+o(1))/\phi(q)$ times that of \eqref{eq:hugefundamental-tocompare}. Since \eqref{eq:hugefundamental-tocompare} counts convenient $n\le x$ with $(\lambda(n),q)=1$, the relation \eqref{eq:convenientreduced} follows. This completes the proof of Theorem \ref{thm:WUD2}.

\section{Interlude}\label{sec:interlude} Our proof of Theorem \ref{thm:WUD2} does not yield weak uniform distribution when $3\mid q$. However, one can show by these methods that the only obstruction to weak uniform distribution mod $q$ arises from an obstruction modulo $3$. The following proposition makes this precise.

\begin{prop}\label{prop:interlude} Fix $A>0$. As $x\to\infty$, 
  \begin{equation}\label{eq:interlude} \#\{n\le x: \lambda(n)\equiv a\pmod{q}\} = \frac{2+o(1)}{\phi(q)} 
  \#\{n \le x: \lambda(n)\equiv a\pmod{3} \} + o(N/\phi(q)), \end{equation}
uniformly in the choice of coprime residue class $a\bmod{q}$, where $q$ is odd, $3\mid q$, and $q\le (\log{x})^{A}$.
\end{prop}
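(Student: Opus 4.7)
The plan is to re-run the argument of \S\ref{sec:WUD2} verbatim up to \eqref{eq:conditionsremoved}, which approximates the count of convenient $n \le x$ with $\lambda(n) \equiv a \pmod{q}$ to within $o(N/\phi(q))$. Nothing in that derivation used the hypothesis $3 \nmid q$; the condition $\gcd(q,6)=1$ enters only at the very last step, when Lemma \ref{lem:congruencecount} is applied to evaluate $\#\V_{m,\textbf{d}}(q)$. So the whole task reduces to redoing this evaluation in the presence of a factor $3 \mid q$.

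Decompose $q = 3^k q'$ with $\gcd(q',6)=1$. By CRT, $\#\V_{m,\textbf{d}}(q) = \#\V_{m,\textbf{d}}(3^k)\cdot \#\V_{m,\textbf{d}}(q')$. Lemma \ref{lem:congruencecount} handles the prime-to-$3$ factor, giving $(1+o(1))\phi_2(q')^J/\phi(q')$. The $3$-primary factor must be computed by hand, since Lemma \ref{lem:congruencecount} is vacuous at $p=3$: the error factor $(p-2)^{1-J}$ equals $1$, not $o(1)$, there. Directly, the constraint $\gcd(a_j(a_j-1), 3^k)=1$ forces $a_j \equiv 2 \pmod 3$, so each $a_j - 1$ lies in the subgroup $H := \{b \bmod 3^k : b \equiv 1 \pmod 3\} \le (\Z/3^k)^\times$, which has order $\phi_2(3^k)$ and index $2$. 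Closedness of $H$ under multiplication forces $\prod_j(a_j-1) \in H$; when $a_{m,\textbf{d}} \in H$, the congruence $\prod_j(a_j-1) \equiv a_{m,\textbf{d}} \pmod{3^k}$ admits exactly $|H|^{J-1}$ solutions, and otherwise none. Rewriting and combining yields
\[ \#\V_{m,\textbf{d}}(q) = (1+o(1)) \cdot \1[a_{m,\textbf{d}} \equiv 1 \pmod 3] \cdot \frac{2\phi_2(q)^J}{\phi(q)}. \]

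Substituting this into \eqref{eq:conditionsremoved} identifies the count of convenient $n \le x$ with $\lambda(n) \equiv a \pmod{q}$ as $(1+o(1))\,\tfrac{2}{\phi(q)}$ times the sum \eqref{eq:hugefundamental-tocompare} restricted to indices with $a_{m,\textbf{d}} \equiv 1 \pmod 3$, modulo the old error of $o(N/\phi(q))$. To interpret the indicator at the level of $n$: any convenient $n$ counted in \eqref{eq:hugefundamental-tocompare} satisfies $(\lambda(n),q)=1$, whence each $P_j \equiv 2 \pmod 3$ (since $3 \mid q$ forbids $3 \mid P_j - 1$); thus $\prod_j(P_j-1) \equiv 1 \pmod 3$, and $\lambda(n) = r_{m,\textbf{d}} \prod_j(P_j-1) \equiv r_{m,\textbf{d}} \pmod 3$. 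The condition $a_{m,\textbf{d}} = a/r_{m,\textbf{d}} \equiv 1 \pmod 3$ is therefore the same as $\lambda(n) \equiv a \pmod 3$, so the restricted version of \eqref{eq:hugefundamental-tocompare} counts convenient $n \le x$ with $(\lambda(n),q)=1$ and $\lambda(n)\equiv a \pmod 3$. Removing the convenience restriction via Lemma \ref{lem:inconvenient0} costs $o(N)$, which becomes $o(N/\phi(q))$ after multiplication by $\frac{2}{\phi(q)}$, yielding \eqref{eq:interlude}. The only conceptual wrinkle is noticing that Lemma \ref{lem:congruencecount} is silent at $p=3$; the subgroup computation is then routine, and the rest of the apparatus of \S\ref{sec:WUD2} is reused verbatim.
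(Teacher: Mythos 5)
Your proposal is correct and follows essentially the same route as the paper: reuse the reduction to \eqref{eq:conditionsremoved}, split $\#\V_{m,\textbf{d}}(q)$ by CRT into the $3$-primary part (computed by hand via the index-$2$ subgroup of residues $\equiv 1\pmod 3$, giving $(3^{k-1})^{J-1}=2\phi_2(3^k)^J/\phi(3^k)$ solutions when $a_{m,\textbf{d}}\equiv 1\pmod 3$ and none otherwise) and the prime-to-$3$ part handled by Lemma \ref{lem:congruencecount}, then recast the compatibility indicator as the condition $\lambda(n)\equiv a\pmod 3$. The one point worth flagging is that, as in the paper's own argument, the set appearing on the right-hand side should carry the additional constraint $(\lambda(n),q)=1$ (which is what the restricted version of \eqref{eq:hugefundamental-tocompare} actually counts, and what the application to Theorem \ref{thm:WUD1} requires).
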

\noindent In the remainder of this section we describe how to adapt the proof of Theorem \ref{thm:WUD2} to obtain Proposition \ref{prop:interlude}. Note that by Lemmas \ref{lem:inconvenient0} and  \ref{lem:inconvenientsolutions}, to prove Proposition \ref{prop:interlude} it suffices to establish the ``convenient version'' of \eqref{eq:interlude}, where $n$ is restricted to covenient values in both sets.

We start from \eqref{eq:conditionsremoved}, which (as we know already) counts convenient solutions $n\le x$ to $\lambda(n)\equiv a\pmod{q}$, up to an error of $o(N/\phi(q))$. The new twist is that when $3\mid q$, the sets $\V_{m,\textbf{d}}(q)$ are no longer uniformly of size $\sim \phi_2(q)^{J}/\phi(q)$. In fact, since each $a$ with $a(a-1)$ coprime to $q$ has $a-1\equiv 1\pmod{3}$, the set $\V_{m,\mathbf{d}}(q)$ is empty unless $a_{m,\textbf{d}} \equiv 1\pmod{3}$. 

Suppose we are in the case that $a_{m,\textbf{d}} \equiv 1\pmod{3}$; here we say that m and \textbf{d} are \textsf{compatible}. Then $\#\V_{m,\textbf{d}}(q) = \#\V_{m,\textbf{d}}(q_0) \cdot \#\V'$, where
\[ \V' = \{(a_1,\dots,a_J)\bmod{3^k}: \text{each $a_j\equiv 2\pmod{3}$},~\prod_{j=1}^{J}(a_j-1) \equiv a_{m,\textbf{d}} \pmod{3^k}\}. \]
By Lemma \ref{lem:congruencecount}, $\#\V_{m,\textbf{d}}(q_0) = (1+o(1)) \phi_2(q_0)^{J}/\phi(q_0)$. Also, $\#\V' = (3^{k-1})^{J-1}$: every choice of $a_1,\dots,a_{J-1} \equiv 2\pmod{3}$ determines a unique $a_J$. As $(3^{k-1})^{J-1}= 2\phi_2(3^{k})^{J}/\phi(3^{k})$, we deduce that $\#\V_{m,\textbf{d}}(q) = (2+o(1)) \phi(q)^{J}/\phi(q)$ for compatible $m$ and $\textbf{d}$.

We can recast the compatibility condition by  by referring back to \eqref{eq:newlcm} and the definition \eqref{eq:radefs} of $a_{m,\textbf{d}}$: For $m, \textbf{d}$, and $P_1,\dots, P_J$ as in \eqref{eq:conditionsremoved},
\[ m\text{ and }\textbf{d}\text{ are compatible} \Longleftrightarrow \lambda(m P_J \cdots P_1) \equiv a\pmod{3}. \]
Here we have used that each $P_j-1 \equiv 1\pmod{3}$.   

Combining the observations of the previous two paragraphs, we find that \eqref{eq:conditionsremoved} has size
\begin{equation*}\frac{(2+o(1))}{\phi(q)} \left[\frac{1}{J!}\sum_{\substack{m \le x \\ (\lambda(m),q)=1}} \sum_{\substack{d_J,\dots,d_1\text{ $y$-smooth} \\ (d_j,q)=1~\forall j \\ 2\mid d_j~\forall j }} \sum_{\substack{P_J, \dots, P_1 \text{ distinct}\\ L_m < P_j~\forall j\\ P_J \cdots P_1  \le x/m \\ s_w(P_j-1)=d_j~\forall j \\ \lambda(mP_J \cdots P_1) \equiv a\pmod{3}}} 1. \right].\end{equation*} 
The bracketed expression is precisely analogous to \eqref{eq:hugefundamental-tocompare};  it counts convenient $n\le x$ with $(\lambda(n),q)=1$ and $\lambda(n) \equiv a\pmod{3}$. We have therefore shown the `convenient version' of \eqref{eq:interlude} and completed the proof of Proposition \ref{prop:interlude}.

\section{Uniformity to $\log_3{x}$ when $3\mid q$: Proof of Theorem \ref{thm:WUD1}}\label{sec:thm1proof} In view of Proposition \ref{prop:interlude}, Theorem \ref{thm:WUD1} follows from our next result.

\begin{thm}\label{thm:mod3} Let $a \in \{\pm 1\}$. As $x\to\infty$, 
  \begin{equation}\label{eq:interlude2} \#\{n\le x: (\lambda(n),q)=1, \lambda(n)\equiv a\pod{3}\} \sim \frac{1}{2} \#\{n\le x: (\lambda(n),q)=1\}, \end{equation}
uniformly for integers $q\le \log_3{x}$ where $q$ is odd and $3\mid q$.
\end{thm}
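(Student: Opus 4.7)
The plan is to reformulate Theorem \ref{thm:mod3} as a cancellation statement. Let $\chi$ be the nontrivial Dirichlet character modulo $3$. Since $(\lambda(n),q)=1$ together with $3\mid q$ forces $3\nmid\lambda(n)$, we have $\chi(\lambda(n))\in\{\pm 1\}$, and \eqref{eq:interlude2} is equivalent to
\[
S(x,q) := \sum_{\substack{n\le x\\ (\lambda(n),q)=1}} \chi(\lambda(n)) = o(N), \qquad\text{as }x\to\infty,
\]
uniformly in $q$ in the range of the statement. I would reuse the framework of \S\ref{sec:WUD2}, restricting to convenient $n$ via Lemma \ref{lem:inconvenient0} and imposing the pairwise coprimality condition \eqref{eq:coprimecondition}, each at a cost of $o(N)$.

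The key algebraic observation: since $(P_j-1)\mid \lambda(n)$ while $3\nmid\lambda(n)$, one has $P_j\equiv 2\pmod 3$ and therefore $\chi(P_j-1)=1$. Combining this with the factorization \eqref{eq:newlcm} and the complete multiplicativity of $\chi$ yields
\[
\chi(\lambda(n)) = \chi(\lambda(m))\cdot E\bigl(s_w(\lambda(m)), s_w(P_1-1),\dots, s_w(P_J-1)\bigr),
\]
where $E(d_0,\dots,d_J):=\chi(\lcm[d_0,\dots,d_J])\prod_{i=0}^{J}\chi(d_i)$ depends only on the $w$-smooth parts and factors prime-by-prime as
\[
E(\mathbf d) = \prod_{\ell\text{ bad}} (-1)^{\sum_{i=0}^{J} v_\ell(d_i)+\max_{0\le i\le J} v_\ell(d_i)},
\]
with ``bad'' meaning $\chi(\ell)=-1$, i.e.\ $\ell=2$ or $\ell>2$ with $\ell\equiv 2\pmod 3$.

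Substituting this decomposition into the analogue of \eqref{eq:hugefundamental-tocompare} and running the argument of \S\ref{sec:WUD2}, the problem reduces to establishing cancellation in an inner sum of the shape $\sum_{\mathbf d}E(d_0,\mathbf d)\prod_{j=1}^{J}f(d_j)$, where $f(d)$ is the sieve density for the $w$-smooth part of $P-1$ arising from \eqref{eq:sieveoutput}. Since $v_\ell(P-1)$ over primes $P$ follows an approximately geometric distribution with parameter $1/\ell$, a short direct computation gives that the local contribution at each bad prime $\ell$ has magnitude at most $|1-2\ell/(\ell^2-1)|^{J}$. Taking the product over bad primes $\ell\le w$ with $\ell\nmid q$ and applying Mertens' theorem for primes in the relevant residue class mod $3$,
\[
\Bigl|\prod_{\substack{\ell\le w\\ \ell\text{ bad},\,\ell\nmid q}} E_\ell\Bigr| \;\ll\; (\log w)^{-J} = o(1).
\]
Combined with the trivial bound $|\chi(\lambda(m))|\le 1$ and Proposition \ref{prop:sumcoprime} for the count of admissible $m$, this gives $S(x,q)=o(N)$ as desired.

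The main obstacle is making the per-prime factorization rigorous. The sieve density $f(d)$ is not literally multiplicative over prime powers, and the $\max$-term in $E$ couples the valuations $v_\ell(d_j)$ across $j$. Handling these correlations requires an ``anatomical'' lemma of the type alluded to in the introduction (the paper's Lemma \ref{lem:gcdformula}, in the spirit of \cite[Theorem~8]{ELP08}): one needs to show that configurations in which two of the $d_j$ share a common prime factor of non-negligible size are rare, so that the per-prime decomposition holds up to an error that can be absorbed. Once these correlations are under control, the Mertens-type cancellation sketched above delivers \eqref{eq:interlude2} in the range $q\le\log_3 x$ of the hypothesis.
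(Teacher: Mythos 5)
Your reformulation of \eqref{eq:interlude2} as the cancellation statement $\sum_{n\le x,\,(\lambda(n),q)=1}\chi(\lambda(n))=o(N)$ is fine, and so is the key observation that $3\nmid\lambda(n)$ forces $P_j\equiv 2\pmod 3$, hence $\chi(P_j-1)=1$, so that $\chi(\lambda(n))$ is controlled by the $w$-smooth data in \eqref{eq:newlcm}. The gap is in the claimed cancellation of $E(\mathbf d)$. The asserted local bound $|E_\ell|\le|1-2\ell/(\ell^2-1)|^{J}$ is what one would obtain by discarding, or factoring across $i$, the term $(-1)^{\max_i v_\ell(d_i)}$; that term does neither. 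As a sanity check, for a single prime ($J=1$, $d_0$ omitted) the exponent $\sum_i v_\ell+\max_i v_\ell=2v_\ell(d_1)$ is always even, so $E_\ell\equiv 1$ and there is \emph{no} cancellation, against the predicted $|1-2\ell/(\ell^2-1)|$. More seriously, the repair you propose --- that two of the $d_j$ rarely share a prime factor of appreciable size --- is unavailable at exactly the primes that matter: every $d_j$ is even, and for a fixed small $\ell$ a given pair $d_i,d_j$ shares the factor $\ell$ with probability $\asymp\ell^{-2}$, so with $\binom{J}{2}$ pairs and $J\to\infty$ such sharing is typical, not rare. Since your Mertens product draws essentially all of its decay from small $\ell$, the coupling there must be computed, not excluded. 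Whether $E_\ell$ oscillates or is identically $+1$ hinges on whether $v_\ell(d_0)=v_\ell(s_w(\lambda(m)))$ dominates $\max_j v_\ell(P_j-1)$, and that is an anatomical statement about $\lambda(m)$ you have not established.

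The paper sidesteps all of this by splitting off a \emph{single} large prime, $n=mP$, and proving Lemma \ref{lem:gcdformula}: for almost all admissible $n$, $(\lambda(m),P-1)=s_w(P-1)$ (with $w=\log_2 x$), i.e.\ the smooth part of $\lambda(m)$ swallows that of $P-1$ because $\lambda(m)$ is typically divisible by every integer up to about $\log_2 x$. Then $\lambda(n)=\lambda(m)(P-1)/s_w(P-1)$ exactly, so $\chi(\lambda(n))=\chi(\lambda(m))\chi(s_w(P-1))$ with no lcm left over, and the cancellation is read off from the single weighted character sum $\sum_{d}\chi(d)g(d_{\odd})/\phi(d)=O(\log_2(3q))$ measured against the untwisted sum of size $\asymp\log w$; the factor $W\asymp 1/\log_3 x$ then delivers $o(N)$ in the range $q\le\log_3 x$. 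If you insist on the $J$-prime decomposition, the analogous move is to prove $v_\ell(s_w(\lambda(m)))\ge\max_j v_\ell(P_j-1)$ for all small $\ell$ so that the maximum in $E$ is attained at $i=0$ and the product over $j$ genuinely factors --- but at that point you will have reproved the content of Lemma \ref{lem:gcdformula} in a harder setting, and the one-prime argument is the shorter path.
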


Our strategy for proving Theorem \ref{thm:mod3} shares many features with the proof of Theorem \ref{thm:WUD1} but we must select certain parameters rather differently. In particular, the reader should be warned that $y, w$, and $L_m$ will be (re)used with different (but related) meanings.

We start by refreshing our notion of convenient. We will now call $n\le x$ \textsf{convenient} if $P^{+}(n) > y:=x^{1/\sqrt{\log_3{x}}}$ and $P^{+}(n)^2\nmid n$. Thus, $n$ is convenient precisely when we can write
\begin{equation}\label{eq:newconvenient} n = mP, \quad\text{where}\quad L_m:=\max\{P^{+}(m),y\} < P. \end{equation}

We continue to use $N$ to denote the count of $n\le x$ with $(\lambda(n),q)=1$. The following statement is the analogue of Lemma \ref{lem:inconvenient0} for our new definition of inconvenient.

\begin{lem}\label{lem:fewinconvenient} The number of inconvenient $n\le x$ with $(\lambda(n),q)=1$ is $o(N)$. 
\end{lem}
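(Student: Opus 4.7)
I would split the count of inconvenient $n \le x$ with $(\lambda(n), q) = 1$ into two cases according to which convenience condition fails. For $n$ with $P^+(n)^2 \mid n$, the count is at most $\sum_{p > y} x/p^2 \ll x/(y\log y)$. Since $y = x^{1/\sqrt{\log_3 x}}$ grows faster than any fixed power of $\log x$, while Proposition \ref{prop:sumcoprime} combined with \eqref{eq:alphalower} gives $N \gg x/(\log x)^{1-\alpha(q)} \exp(-O((\log_2(3q))^2)) \gg x/\log{x}$ uniformly for $q \le \log_3{x}$, this contribution is trivially $o(N)$.

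The second case, $P^+(n) \le y$, is the heart of the proof. Writing $\Psi_q(x, y)$ for the count of such $n$, the condition $(\lambda(n), q) = 1$ forces every odd prime $p \mid n$ to satisfy $(p-1, q) = 1$. With $S := \{2\} \cup \{p\text{ odd} : (p-1, q) = 1\}$, one has $\Psi_q(x, y) \le \Psi_S(x, y) := \#\{n \le x : P^+(n) \le y,\, p \mid n \Rightarrow p \in S\}$. I would aim to establish
\[
\Psi_S(x, y) \ll x \rho(u) (\log y)^{\alpha - 1} \exp(O((\log_2(3q))^2)),
\]
where $u = \log x/\log y = \sqrt{\log_3 x}$ and $\rho$ is Dickman's function. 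The approach is M\"obius inversion against primes outside $S \cap [2, y]$: $\Psi_S(x, y) = \sum_{d \in \mathcal{D}} \mu(d) \Psi(x/d, y)$, where $\mathcal{D}$ consists of squarefree products of primes in $[2, y] \setminus S$. One applies Hildebrand's estimate $\Psi(z, y) = z\rho(\log z/\log y)(1 + o(1))$ to each term (justified since $\log y$ comfortably exceeds $(\log_2 x)^2$), truncates the tail $d > x^{1/2}$ using the trivial bound $\Psi(z, y) \le z$, and uses Proposition \ref{prop:slightgen} together with Mertens' theorem to estimate $\prod_{p \le y,\, p \notin S}(1 - 1/p) \asymp (\log y)^{\alpha - 1} \exp(O((\log_2(3q))^2))$.

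Combined with Proposition \ref{prop:sumcoprime}'s bound $N \gg x/(\log x)^{1-\alpha} \exp(-O((\log_2(3q))^2))$, the ratio becomes $\Psi_S(x, y)/N \ll u^{1-\alpha} \rho(u) \exp(O((\log_4 x)^2))$. Since $\rho(u) = \exp(-(1/2 + o(1))\sqrt{\log_3 x}\,\log_4 x)$, $u^{1-\alpha} \le u$, and $(\log_4 x)^2 = o(\sqrt{\log_3 x}\,\log_4 x)$ (because $\log_4 x = o(\sqrt{\log_3 x})$), the ratio tends to $0$, completing the proof. The hard part will be to make the M\"obius sieve fully rigorous: the tail contributions with $d > x^{1/2}$ require careful control, and the variation of $\rho(\log(x/d)/\log y)$ with $d$ must be handled via the near-constancy of $\rho$ on short intervals for the small $d$ that dominate the sum, together with crude trivial bounds for the large $d$.
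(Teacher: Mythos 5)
Your handling of the case $P^{+}(n)^2\mid n$ is correct and agrees with the paper, and your final numerology \emph{would} close if the claimed intermediate bound $\Psi_S(x,y)\ll x\rho(u)(\log y)^{\alpha-1}\exp(O((\log_2(3q))^2))$ were available. The genuine gap is in the proposed proof of that bound. The M\"obius inversion $\Psi_S(x,y)=\sum_{d\in\mathcal{D}}\mu(d)\Psi(x/d,y)$ is a valid identity, but the plan ``apply Hildebrand to each term, truncate at $d>x^{1/2}$, and use near-constancy of $\rho$'' fails on two counts. First, $\rho$ is not remotely constant over the retained range of $d$: for $d\le x^{1/2}$ the parameter $u_d=\log(x/d)/\log y$ sweeps $[u/2,u]$, and $\rho(u/2)/\rho(u)=\exp((\tfrac12+o(1))u\log u)$ tends to infinity faster than any fixed power of $u$. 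Second, the target factor $(\log y)^{\alpha-1}\asymp\prod_{p\le y,\,p\notin S}(1-1/p)$ comes entirely from sign cancellation in $\sum_{d}\mu(d)/d$; the moment you bound the Hildebrand error terms in absolute value, you are left with $\sum_{d\in\mathcal{D},\,d\le x^{1/2}}1/d\asymp(\log y)^{1-\alpha}$, so the accumulated error exceeds the putative main term by a factor of order $(\log y)^{-2\alpha}\exp(\Theta(u\log u))\log y\cdot\log(u+1)$, which is unbounded (note $\alpha$ can be as small as $\asymp 1/\log_5 x$ here). Establishing the joint smoothness-plus-coprimality saving rigorously requires a different mechanism (e.g.\ Rankin's trick with a shifted Euler product over $S\cap[2,y]$), and as written this step would not survive scrutiny.

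The larger point is that no smooth-number input is needed. The paper's proof simply sieves the $y$-smooth case by \emph{all} primes in $(y,x]$ together with the primes $p\le y$ having $(p-1,q)>1$, giving an upper bound $\ll x\prod(1-1/p)$ over the sieved primes, which by Mertens and Proposition \ref{prop:slightgen} is $\frac{x}{\log x}(\log y)^{\alpha}\exp(O((\log_2(3q))^2))=\frac{x}{(\log x)^{1-\alpha}}\exp(O((\log_2(3q))^2))\,(\log_3 x)^{-\alpha/2}$. The saving over $N$ is the single factor $(\log_3 x)^{-\alpha/2}=\exp(-\tfrac{\alpha}{2}\log_4 x)$, and since $q\le\log_3 x$ forces $\alpha\gg 1/\log_5 x$ while the fudge factors are only $\exp(O((\log_5 x)^2))$, this already gives $o(N)$. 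In other words, the decay you are trying to extract from $\rho(u)$ is obtained for free from the discrepancy between $\log_2 y$ and $\log_2 x$; I would recommend replacing your second case with this one-line sieve bound.
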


\begin{proof}The number of $n\le x$ divisible by the square of a prime exceeding $y$ is $o(x/y)$ and thus also $o(N)$. All other inconvenient $n\le x$ with $(\lambda(n),q)=1$ are $y$-smooth and divisible only by $p$ with $(p-1,q)=1$. The sieve bounds the number of such $n\le x$ as \begin{multline*} \ll x \prod_{\substack{p \le y \\ (p-1,q)>1}} \left(1-\frac{1}{p}\right) \prod_{y < p \le x}\left(1-\frac{1}{p}\right) \ll \frac{x}{\log{x}} \prod_{\substack{p \le y \\ (p-1,q)=1}} \left(1-\frac{1}{p}\right)^{-1} \\
  \ll \frac{x}{\log{x}} \exp\Bigg(\sum_{\substack{p \le y \\ (p-1,q)=1}} \frac{1}{p}\Bigg) = \frac{x}{(\log{x})^{1-\alpha}} \exp(O((\log_2(3q))^{2})) \cdot \frac{1}{(\log_3{x})^{\alpha/2}},\end{multline*}
where we used Proposition \ref{prop:slightgen} to estimate on the sum on $p$. Comparing with \eqref{eq:sumcoprime}, keeping in mind the lower bound \eqref{eq:alphalower} and the upper bound $q\le \log_3{x}$, this is seen to be $o(N)$. (In fact, it is $o(N/\exp(\sqrt{\log_4 x}))$.)
\end{proof}

In view of Lemma \ref{lem:fewinconvenient}, Theorem \ref{thm:mod3} will be established once it is shown that
\begin{multline}\label{eq:mod3reduced} \#\{\text{convenient } n\le x: (\lambda(n),q)=1, \lambda(n)\equiv a\pmod{3}\}\\ = \frac{1}{2} \#\{\text{convenient } n\le x: (\lambda(n),q)=1\} + o(N). \end{multline}
For convenient $n$ with $n=mP$ as in \eqref{eq:newconvenient},
\begin{equation}\label{eq:lambdamod3eq}\lambda(n) = \frac{\lambda(m) (P-1)}{(\lambda(m),P-1)}. \end{equation}

The following ``near-identity'' for $(\lambda(m),P-1)$ is the linchpin of our proof. Let
\[ w := \log_2{x}. \]

\begin{lem}\label{lem:gcdformula} Among all convenient $n\le x$ with $(\lambda(n),q)=1$, all but $o(N)$ satisfy \[ (\lambda(m),{P-1}) = s_w(P-1). \]
\end{lem}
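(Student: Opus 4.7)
The plan is to decompose the asserted equality into two divisibility claims and control the exceptions to each: (A) $s_w(P-1) \mid \lambda(m)$, and (B) no prime $\ell > w$ divides $(\lambda(m), P-1)$. If both hold, then $(\lambda(m), P-1)$ is a $w$-smooth divisor of $P-1$ that contains $s_w(P-1)$, hence equals it. I would treat (B) and (A) separately.

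The heart of the matter is (B), which I would prove by a union bound over primes $\ell > w$. Since $\ell \mid P-1$ and $(P-1, q)=1$ force $(\ell, q)=1$, the sum is over primes $\ell>w$ coprime to $q$. Fix such an $\ell$. Setting aside the negligible contribution $O(x/\ell^2)$ from $n$ with $\ell^2\mid m$, the bad event requires $n$ to be divisible by two distinct primes $p<P$ with $p, P \equiv 1 \pod{\ell}$ and $(p-1,q)=(P-1,q)=1$. Writing $n=pPr$ with $(\lambda(r),q)=1$, I would bound the count of $r\le x/(pP)$ via Proposition~\ref{prop:sumcoprime} and then sum over $p$ and $P$. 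The critical gain comes from the restriction $P>y=x^{1/\sqrt{\log_3 x}}$: applying Proposition~\ref{prop:slightgen} at heights $x$ and $y$ separately yields
\[ \sum_{\substack{y<P\le x \\ P\equiv 1\pod{\ell},\,(P-1,q)=1}} \frac{1}{P} \ll \frac{\log_2 x - \log_2 y}{\ell} \ll \frac{\log_4 x}{\ell}, \]
since $\log_2 y = \log_2 x - \tfrac{1}{2}\log_4 x$. Combined with the standard $\sum 1/p \ll \log_2 x/\ell$ for the smaller prime, this yields a per-$\ell$ bad count $\ll N\,\log_2 x\,\log_4 x/\ell^{2}$. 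Summing over primes $\ell > w$ using $\sum_{\ell>w} 1/\ell^{2} \ll 1/(w\log w)$ produces a total of $\ll N\log_4 x/\log_3 x = o(N)$.

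For (A), I would handle one prime $\ell\le w$ at a time and show the $\ell$-exceptional set has size $o(N/w)$. For moderate prime powers $\ell^a \le (\log x)^{K}$ (some large constant $K$) with $\ell^a \mid P-1$, existence of a prime $p\mid m$ with $\ell^a\mid p-1$ and $(p-1,q)=1$ guarantees $\ell^a\mid \lambda(m)$; the count of $m$ of size $\ge y$ with $(\lambda(m),q)=1$ that lacks such a prime is suppressed by a Mertens-type factor $(\log x)^{-c/\phi(\ell^a)}$, absorbing into $o(N/w)$ upon summing over such $\ell^a$ and $P$. For large prime powers $\ell^a>(\log x)^K$ dividing $P-1$, the rareness of $P$ itself (with $\#\{P\le x:\ell^a\mid P-1\}\le x/\ell^a$) handles the contribution.

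The main obstacle I anticipate is the sharpness of (B): a naive bound $\sum_p 1/p \cdot \sum_P 1/P$ using $\log_2 x/\ell$ for both factors would give $\gg N(\log_2 x)^2/(w\log w) = N\log_2 x/\log_3 x$, which is much larger than $N$. The choice $y=x^{1/\sqrt{\log_3 x}}$ underlying the convenient decomposition in this section seems precisely tuned so that restricting to $P>y$ trades one factor of $\log_2 x$ for $\log_4 x$, barely pushing the total exception count below $N$.
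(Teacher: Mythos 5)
Your decomposition into (A) $s_w(P-1)\mid\lambda(m)$ and (B) no prime $\ell>w$ divides $(\lambda(m),P-1)$ is the same skeleton as the paper's proof, and your diagnosis of why the restriction $P>y$ is the crux of (B) is exactly right. But part (A) has a genuine gap. The Mertens-type factor $(\log x)^{-c/\phi(\ell^a)}=\exp(-c\alpha\log_2x/\phi(\ell^a))$ gives a usable saving only when $\phi(\ell^a)$ is rather smaller than $\alpha\log_2 x$; for a prime $\ell$ of size comparable to $w=\log_2 x$ (recall $\alpha$ may be as small as $\asymp 1/\log_2(3q)$) this factor is $1-o(1)$, so a positive proportion of the admissible $m$ have $\ell\nmid\lambda(m)$ and the $\ell$-exceptional set has size of order $N/\ell$ --- nowhere near your stated target $o(N/w)$, and summing $N/\ell$ over all $\ell\le w$ would give $N\log_3x$. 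What rescues the argument is not the Mertens factor but the shortness of the problematic range: setting $w'=\log_2x/(\log_3x)^2$, one first discards the $n$ for which $P-1$ has \emph{any} prime factor in $(w',w]$, affordable because $\sum_{w'<\ell\le w}1/\ell\ll\log_4x/\log_3x$; the Mertens saving then handles all moduli $s\le w'$, and the remaining prime powers $\ell^a\mid P-1$ with $\ell\le w'<\ell^a$ are squarefull, hence rare as divisors of $P-1$. This buffer-interval step (the paper's condition (iv)) is a missing idea, not a routine detail.

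Two further problems, both repairable but real. First, in (B) the bound $\sum_{y<P\le x,\,P\equiv1\pmod{\ell}}1/P\ll\log_4x/\ell$ is correct but cannot be obtained by differencing Proposition \ref{prop:slightgen} (or Lemma \ref{lem:PN}) at heights $x$ and $y$: the error terms there are not $o(\log_4x/\phi(\ell))$ and do not cancel, so they swamp the main term for every $\ell>w$. You need Brun--Titchmarsh plus partial summation carried out directly on the interval $(y,x]$ (equivalently, as in the paper, fix $m$ and count $P\in(y,x/m]$ in the progression by Brun--Titchmarsh). Second, the crude bounds ``$O(x/\ell^2)$ for $n$ with $\ell^2\mid m$'' and ``count of $r\le x/(pP)$ is $\ll N/(pP)$'' are not uniform enough in $q$: since $N$ can be as small as $x/(\log x)^{1-o(1)}$, the quantity $\sum_{\ell>w}x/\ell^2\asymp x/(\log_2x\log_3x)$ is \emph{not} $o(N)$, and the $r$-count bound fails outright when $pP$ is close to $x$. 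Both issues disappear if you organize the sum with the count of $P$ innermost and use that the cofactor of $P$ is composed of primes $p$ with $(p-1,q)=1$, which is how the paper proceeds.
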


\begin{proof} First, we may restrict attention to convenient $n = mP\le x$ for which
\begin{enumerate}
  \item[(i)] $m$ is not divisible by the square of a prime exceeding $\log_3{x}$.
\end{enumerate}
Since $y < P \le x/m$, the number of convenient $n$ corresponding to a given $m$ is bounded by $\pi(x/m)$, which is $\ll x/m\log{(x/m)} \ll x/m\log{y} = x\sqrt{\log_3{x}}/m \log{x}$. Thus, an upper bound for the number of exceptions to (i) follows from an upper bound for $\sum 1/m$. 

Assuming (i) fails, write $m = \ell^2 m_0$ where $\ell > \log_3{x}$. As $(\lambda(m),q)=1$, every $p$ dividing $m_0$ has $(p-1,q)=1$. Hence, the reciprocal sum of possible $m_0$ satisfies $\sum 1/m_0 \ll \exp(\sum_{p \le x,~(p-1,q)=1} 1/p) \ll (\log{x})^{\alpha}\exp(O((\log_2(3q))^2))$. Also, $\sum_{\ell > \log_3 x}1/\ell^2 \ll 1/\log_3{x}$. It follows that the number of exceptions to (i) is .
\[ \ll \frac{x}{(\log{x})^{1-\alpha}}\exp(O((\log_2(3q))^2)) \cdot \frac{\sqrt{\log_3{x}}}{\log_3 x}. \] This is $o(N)$ (in fact, $o(N/(\log_3{x})^{1/3})$) and so is acceptable for us.

We may further assume that
\begin{enumerate}
    \item[(ii)] $\lambda(m)$ is divisible by every positive integer $s\le w':= \log_2{x}/(\log_3{x})^2$ with $(s,q)=1$.
\end{enumerate}
Let $s$ be a positive integer with $s\le w'$, $(s,q)=1$, and suppose that $s\nmid \lambda(m)$. Then $m$ is divisible only by primes $p$ with $p-1$ coprime to $q$ and $p-1$ not divisible by $s$. Invoking Proposition \ref{prop:slightgen},
\[ \sum \frac{1}{m} \ll \exp\Bigg(\sum_{\substack{p\le x \\ (p-1,q)=1 \\ s\nmid p-1}}\frac{1}{p}\Bigg)\ll (\log{x})^{\alpha(1-1/\phi(s))} \exp(O((\log_2{(3q)})^2)).\]
Reasoning as in (i), the number of corresponding $n$ is therefore
\begin{equation}\label{eq:iexcept} \ll \frac{x}{(\log{x})^{1-\alpha}} \exp(O((\log_2{(3q)})^2)) \cdot \frac{\sqrt{\log_3{x}}}{(\log{x})^{\alpha/\phi(s)}}.\end{equation}
As $s\le w'$, 
\[ (\log{x})^{\alpha/\phi(s)} \ge (\log{x})^{\alpha/s}\ge \exp(\alpha (\log_3{x})^2),\] and so the right-hand side of \eqref{eq:iexcept} is $o(N/\exp((\log_{3}{x})^{3/2}))$ (say). Summing on the at most $w'$ values of $s$, we obtain only $o(N)$ exceptions to (ii).

Continuing, we can assume that
\begin{enumerate}
  \item[(iii)] $(\lambda(m),P-1)$ has all prime factors at most $w'':= \log_2{x} (\log_3{x})^2$.
\end{enumerate}
If (iii) fails, choose a prime prime $\ell > w''$  dividing $\lambda(m)$ and $P-1$. Our strategy will be to count possibilities for $P$ given $m, \ell$ and then sum on $m,\ell$. If we assume temporarily that $\ell \le y^{1/3}$, we can apply Brun--Titchmarsh: The number of $P\equiv 1\pmod{\ell}$ with $y < P \le x/m$ is
\[ \ll \frac{x}{m\ell \log(x/m\ell)} \ll \frac{x \sqrt{\log_3{x}}}{m \ell \log{x}}. \]
Since $\ell \mid \lambda(m)$ and (by (i)) $\ell^2\nmid m$, we must be able to write $m = p m_0$, where $p\equiv 1\pmod{\ell}$. Since $\sum 1/p \ll \log_2{x}/\ell$ (taken over all $p\le x$ with $p\equiv 1\pmod{\ell}$) and $\sum 1/m_0 \ll (\log{x})^{\alpha} \exp(O((\log_2{(3q)})^2))$ (taken over all $m_0\le x$ divisible only by $p$ with $(p-1,q)=1$), we conclude that the count of exceptional $n$ arising from a given $\ell \le y^{1/3}$ is 
\[ \ll \frac{x}{(\log{x})^{1-\alpha}} \exp(O((\log_2{(3q)})^2)) \cdot \frac{\log_2{x} \sqrt{\log_3{x}}}{\ell^2}. \] 
Summing on $\ell > w''$ gives a quantity that is $o(N)$, in fact $o(N/\log_3{x})$. Above, we ignored the cases when $\ell > y^{1/3}$. These are handled similarly, except that in place of Brun--Titchmarsh one bounds the number of $P$ trivially by $x/m\ell$. We leave to the reader to check that these cases contribute $o(N)$ (in fact, $o(N/y^{1/4})$). 

We can also assume that
\begin{enumerate}
\item[(iv)] $P-1$ has no prime factors in the interval $(w',w'']$.
\end{enumerate}
If (iv) fails, choose $\ell \mid P-1$ with $\ell \in (w',w'']$. Arguing with Brun--Titchmarsh as in (iii), the number of $P$ given $\ell,m$ is $\ll x\sqrt{\log_3{x}}/m\ell \log{x}$. Summing on $m$, the number of exceptions to (iv) arising from a given $\ell$ is 
\[ \ll \frac{x}{(\log{x})^{1-\alpha}} \exp(O((\log_2{(3q)})^2)) \cdot \frac{\sqrt{\log_3{x}}}{\ell}. \]
Since $\sum_{\ell \in (w,w'']} 1/\ell \ll \log_4{x}/\log_3{x}$, there are $o(N)$ exceptions to (iv) (in fact, $o(N/(\log_3{x})^{1/3}))$). 

Conditions (iii) and (iv) ensure that $(\lambda(m),P-1)$ divides $s_w(P-1)$. Furthermore, if $(\lambda(m),P-1) \ne s_w(P-1)$, then there is a prime power $p^k$, with $p \le w'$, $p\nmid q$, such that $p^k \mid P-1$ but $p^k\nmid \lambda(m)$. By (ii), $p^k > w'$, and so $k > 1$. Write $s=p^k$ and suppose to start with that $s \le y^{1/3}$. Arguing as in (iv) the number of $n$ corresponding to a given $s$ is 
\[ \ll \frac{x}{(\log{x})^{1-\alpha}} \exp(O((\log_2{(3q)})^2)) \cdot \frac{\sqrt{\log_3{x}}}{s}. \]
This is $o(N)$ after being summed on proper prime powers $s > w'$; in fact, the sum on all \emph{squarefull} $s > w'$ is $o(N/(\log_2{x})^{1/3})$. When $s> y^{1/3}$, we bound the count of $P$ given $m,s$ by $x/ms$. Summing on all $m\le x$ and all squarefull $s > y^{1/3}$ gives a contribution of size $o(x/y^{1/7})$ (say), which is certainly also $o(N)$. This completes the proof of Lemma \ref{lem:gcdformula}.
\end{proof}

If $n\le x$ is convenient with $(\lambda(n),q)=1$, and $n$ satisfies the conclusion of Lemma \ref{lem:gcdformula}, then (by \eqref{eq:lambdamod3eq}) $\lambda(n) \equiv \lambda(m)(P-1)/s_w(P-1) \equiv \lambda(m)/s_w(P-1)\pmod{3}$. It follows that the count of convenient $n\le x$ with $(\lambda(n),q)=1$ and $\lambda(n)\equiv a\pmod{3}$ is given by 
\begin{equation}\label{eq:mod3-0} \sum_{\substack{m\le x \\ (\lambda(m),q)=1}} \sum_{\substack{d\text{ $w$-smooth} \\ d\equiv a\lambda(m)\pmod{3} \\ (d,q)=1,~2\mid d}} \sum_{\substack{L_m < P \le x/m \\ s_w(P-1)=d}} 1, \end{equation}
up to an error of $o(N)$. 

This sum on $P$ in \eqref{eq:mod3-0} is reminscent of \eqref{eq:innersumapprox} but (since $w$ is now minuscule) can be estimated satifactorily by a direct application of inclusion-exclusion. Let 
\begin{equation}\label{eq:Wgdef} W = \frac{1}{2}\prod_{2 < p \le w} \frac{p-2}{p-1}, \quad\text{and define, for odd $d$,}\quad g(d) = \frac{\phi(d)^2}{d \phi_2(d)}. \end{equation}
A straightforward calculation shows that, as long as $L_m < x/m$, 
\begin{align*} \sum_{\substack{L_m < P \le x/m \\ s_w(P-1)=d}} 1 &= \sum_{e\, \mid\, \prod_{p \le w} p}\mu(e) \left(\pi(x/m;1,de)-\pi(L_m;1,de)\right) \\
&=(\Li(x/m)-\Li(L_m))W \frac{g(d_\odd)}{\phi(d)} + O\left(\sum_{e\, \mid\, \prod_{p \le w} p} \tilde{E}(x/m;de)\right), \end{align*}
where
\[ \tilde{E}(X;M) = \max_{2 \le Y \le X} \left|\pi(Y;M,1) - \frac{\Li(Y)}{\phi(M)}\right|.  \]
This estimate might be compared with with \eqref{eq:sieveoutput}; note that $\phi_2(q)$ is absent from the denominator of our new main term as there is no mod $q$ congruence condition on $P$.

The contribution of the error terms when summed on the $m, d$ from \eqref{eq:mod3-0} can be estimated as in our discussion of \eqref{eq:innersumapprox}: We use Bombieri--Vinogradov and Cauchy--Schwarz to treat cases when $d\le y^{1/3}$; these total $O(x/(\log{x})^{B})$ for any fixed $B$. To handle $d > y^{1/3}$, we use the trivial bound $\tilde{E}(x/m;de) \ll x/m\phi(de)$, valid for all $d,e$. (It is important here that we work with $\tilde{E}(x/m;de)$ rather than $E(x/m;de)$.) Since $x/m\phi(de) \le x/m\phi(d)\phi(e)$ and $d/\phi(d)\ll \log{w}$ for $w$-smooth $d$, 
\[ \sum_{m \le x} \sum_{\substack{d > y^{1/3}\\ P^{+}(d)\le w}} \sum_{e\,\mid\,\prod_{p\le w} p}\frac{x}{m\phi(de)} \ll x \log{w} \Bigg(\sum_{m \le x}\frac{1}{m}\Bigg)\Bigg(\sum_{e \text{ $w$-smooth}}\frac{1}{\phi(e)}\Bigg) \sum_{\substack{d > y^{1/3} \\ P^{+}(d)\le w}}\frac{1}{d}. \]
By \eqref{eq:cleanbound}, the sum on $d$ is $O(\exp(-(\log{x})^{9/10}))$. Since the sums on $m$ and $e$ are $O(\log{x})$ and $O(\log{w})$ respectively, we see that those $d> y^{1/3}$ also contribute $O(x/(\log{x})^{B})$. In particular (taking $B=1$, say), the error terms are in total $o(N)$. 

Therefore, up to an error of $o(N)$, the count of convenient $n\le x$ with $(\lambda(n),q)=1$ and $\lambda(n)\equiv a\pmod{3}$ is 
\begin{equation}\label{eq:mod3-1} W\sum_{\substack{m\le x \\ (\lambda(m),q)=1 \\ L_m < x/m}} (\Li(x/m)-\Li(L_m))\sum_{\substack{d\text{ $w$-smooth} \\ d\equiv a\lambda(m)\pmod{3} \\ (d,q)=1,~2\mid d}} \frac{g(d_{\odd})}{\phi(d)}. \end{equation}
Carrying out the same arguments, but forgetting the mod $3$ constraint on $\lambda(n)$, will show that the count of convenient $n\le x$ with $(\lambda(n),q)=1$ is
\begin{equation}\label{eq:mod3-2} W\sum_{\substack{m\le x \\ (\lambda(m),q)=1 \\ L_m < x/m}} (\Li(x/m)-\Li(L_m))\sum_{\substack{d\text{ $w$-smooth} \\ (d,q)=1,~2\mid d}} \frac{g(d_{\odd})}{\phi(d)}, \end{equation}
up to an error of $o(N)$. So to prove \eqref{eq:mod3reduced}, it is enough to show that \eqref{eq:mod3-1} is half of \eqref{eq:mod3-2}, to within $o(N)$. For this we zero in on the sums on $d$ in \eqref{eq:mod3-1} and \eqref{eq:mod3-2}.

It is expedient to group $d$ sharing the same odd part $d'$. For each $\epsilon \in \{\pm 1\}$,  
\begin{align*} \sum_{\substack{d\text{ $w$-smooth} \\ d\equiv \epsilon \pmod{3}\\ (d,q)=1,~2\mid d}} \frac{g(d_{\odd})}{\phi(d)} &= \sum_{\substack{d'\text{ $w$-smooth} \\ (d',2q)=1}} \frac{g(d')}{\phi(d')} \sum_{\substack{k \ge 1 \\ 2^k d'\equiv \epsilon \pmod{3}}}\frac{1}{\phi(2^k)}
\\
&= \frac{2}{3}\sum_{\substack{d'\text{ $w$-smooth} \\ d'\equiv \epsilon\pmod{3}\\ (d',2q)=1}} \frac{g(d')}{\phi(d')} + \frac{4}{3}\sum_{\substack{d'\text{ $w$-smooth} \\ d'\equiv -\epsilon\pmod{3} \\ (d',2q)=1}} \frac{g(d')}{\phi(d')} .\end{align*}
Hence, with $\chi$ denoting the nontrivial character mod $3$,
\[ \Bigg|\sum_{\substack{d\text{ $w$-smooth} \\ d\equiv 1 \pmod{3}\\ (d,q)=1,~2\mid d}} \frac{g(d_{\odd})}{\phi(d)} - \sum_{\substack{d\text{ $w$-smooth} \\ d\equiv -1 \pmod{3}\\ (d,q)=1,~2\mid d}} \frac{g(d_{\odd})}{\phi(d)}\Bigg|= \frac{2}{3}\Bigg|\sum_{\substack{d'\text{ $w$-smooth} \\ (d',2q)=1}}\frac{\chi(d') g(d')}{\phi(d')}\Bigg|.\]
We develop the sum on $d'$ as an Euler product, noting that $g(p)=g(p^2)=g(p^3)=\dots$:
\[ \sum_{\substack{d'\text{ $w$-smooth} \\ (d',2q)=1}}\frac{\chi(d') g(d')}{\phi(d')} = \prod_{\substack{p \le w \\ p \nmid 2q}} \left(1+\frac{\chi(p) g(p)}{p-1}\left(1-\frac{\chi(p)}{p}\right)^{-1}\right).\]
The $p$th term in our right-hand product has size $1+\chi(p)/p + O(1/p^2)$. Hence, the product is $\ll \exp(\sum_{p \le w,~p\nmid 2q} \chi(p)/p)$, which is $\ll \log_2{(3q)} \exp(\sum_{p \le w} \chi(p)/p) \ll \log_2{(3q)}$, using in the last step the convergence of $\sum_{p} \chi(p)/p$. It follows that the difference between \eqref{eq:mod3-1} and half of \eqref{eq:mod3-2} is 
\begin{multline*} \ll \log_{2}(3q) \cdot W \sum_{\substack{m\le x \\ (\lambda(m),q)=1 \\ L_m < x/m}} (\Li(x/m)-\Li(L_m)) \ll \log_{2}(3q) \cdot W \sum_{\substack{m\le x \\ (\lambda(m),q)=1}} \frac{x\sqrt{\log_3{x}}}{m\log{x}}\\ \ll \frac{x}{(\log{x})^{1-\alpha}} \exp(O((\log_2{(3q)})^2)) \cdot  W\log_2{(3q)} \sqrt{\log_3{x}}.\end{multline*}
As $W \ll 1/\log{w} \ll 1/\log_3{x}$ and $\log_2{(3q)}\ll \log_5{x}$, this last expression is $o(N/(\log_3{x})^{1/3})$ and so in particular $o(N)$. This completes the proof of Theorem \ref{thm:WUD1}.

\section{Leading digits of $\lambda(n)$ and $\ell_a(n)$}\label{sec:benford} The proofs of Theorems \ref{thm:Benford} and \ref{thm:Benford2} proceed along similar lines to our argument for Proposition \ref{prop:interlude}; in particular, we keep the definition \eqref{eq:newconvenient} of `convenient' and continue to write $w$ for $\log_2{x}$.

\subsection{Proof of Theorem \ref{thm:Benford}} By Lemma \ref{lem:fewinconvenient} with $q=1$, all but $o(x)$ values of $n\le x$ are convenient. Furthermore, among convenient $n= mP\le x$, all but $o(x)$ have $(\lambda(m),P-1) = s_w(P-1)$ (apply Lemma \ref{lem:gcdformula} with $q=1$). It follows that $\sum_{n\le x}\lambda(n)^{i\theta}$ is equal to
\begin{equation}\label{eq:lambdaexpr0} \sum_{\substack{m \le x\\ L_m < x/m}}  \lambda(m)^{i\theta} \sum_{\substack{d\text{ $w$-smooth} \\ 2\mid d}} \frac{1}{d^{i\theta}} \sum_{\substack{L_m < P \le x/m \\ s_w(P-1)=d}} (P-1)^{i\theta}, \end{equation}
up to an error of $o(x)$. 

As in \S\ref{sec:thm1proof}, the sum on $P$ can be estimated using inclusion-exclusion:
\[\sum_{\substack{L_m < P \le x/m \\ s_w(P-1)=d}} (P-1)^{i\theta} = \sum_{e\text{ $w$-smooth}} \mu(e) \sum_{\substack{P\equiv 1\pmod{de} \\ L_m < P \le x/m}} (P-1)^{i\theta}.\]
Summing by parts,
\[ \sum_{\substack{P\equiv 1\pmod{de} \\ L_m < P \le x/m}} (P-1)^{i\theta} = I_m/\phi(de) + O(\tilde{E}(x/m;de)\log{x}), \] 
where 
\[ I_m:= \int_{L_m}^{x/m} \frac{(t-1)^{i\theta}}{\log{t}}\, \mathrm{d}t. \]
(Here and below, we allow implied constants to depend on the fixed parameter $\theta$.) Plugging these estimates back  into \eqref{eq:lambdaexpr0}, the arguments of \S\ref{sec:thm1proof} will show that the accumulation of errors terms is, in total, $O(x/(\log{x})^{B})$, for any fixed $B$. 

After simplifying the main terms \eqref{eq:lambdaexpr0} becomes, up to an error of $o(x)$,
\[ W \sum_{\substack{m \le x \\ L_m < x/m}} \lambda(m)^{i\theta} I_m \sum_{\substack{d\text{ $w$-smooth} \\ 2\mid d}} \frac{g(d_{\odd})}{\phi(d) d^{i\theta}}, \]
with $W$ and $g(d)$ as in \eqref{eq:Wgdef}. Grouping $d$ sharing the same odd part $d'$ transforms this last expression into
\[ K_{\theta} W \sum_{\substack{m \le x \\ L_m < x/m}} \lambda(m)^{i\theta} I_m \sum_{\substack{d'\text{ $w$-smooth} \\ 2\nmid d'}} \frac{g(d')}{\phi(d') d'^{i\theta}}, \quad\text{where}\quad  K_{\theta} := \sum_{k \ge 1} \frac{1}{\phi(2^k)\cdot 2^{ki\theta}}. \] The sum on $d'$ can be developed into an Euler product with $p$th term $1+ p^{-1-i\theta} + O(p^{-2})$, for each odd $p\le w$; hence, that sum is $O(\exp(\Re\sum_{2<p \le w} p^{-1-i\theta}))$. By the prime number theorem with de la Vall\'ee Poussin error and Abel summation, the series $\sum_{p} p^{-1-i\theta}$ converges. (A well-timed use of integration by parts is helpful in this calculation.) In particular, the partial sums of $p^{-1-i\theta}$ are $O(1)$ (cf.\ \cite[Lemma 4.3]{CLPSR23}). Thus, our sum on $d'$ is also $O(1)$. 

Collecting estimates shows that $\sum_{n\le x} \lambda(n)^{i\theta}$ is, up to an error of $o(x)$,
\[ \ll W \sum_{\substack{m \le x \\ L_m < x/m}} |I_m| \ll W \sum_{\substack{m \le x \\ L_m < x/m}} \frac{x}{m \log(x/m)} \ll \frac{xW \sqrt{\log_3{x}}}{\log{x}}\sum_{m \le x}\frac{1}{m} \ll xW\sqrt{\log_3{x}} \ll \frac{x}{\sqrt{\log_3{x}}}. \]
This completes the proof of Theorem \ref{thm:Benford}.

\subsection{Proof of Theorem \ref{thm:Benford2}} We need two additional lemmas whose proofs are deferred momentarily. Throughout this section, all implied constants may depend on $a$. 

\begin{lem}\label{lem:ordlem1} Fix an integer $a$ with $|a| > 1$. Among all convenient $n=mP\le x$ with $(a,n)=1$, all but $o(x)$ satisfy 
  \[ (\ell_a(m), \ell_a(P)) = s_w(\ell_a(P)). \]
\end{lem}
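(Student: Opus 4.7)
The proof strategy parallels that of Lemma \ref{lem:gcdformula}, with $\lambda(m)$ replaced by $\ell_a(m)$, $P-1$ replaced by $\ell_a(P)$, and all coprimality-to-$q$ conditions dropped (here effectively $q=1$). The two structural facts driving every exceptional-set estimate are: first, $\ell_a(P) \mid P-1$, so any prime $\ell$ dividing $\ell_a(P)$ also divides $P-1$; second, if $\ell \mid \ell_a(m)$ then either $\ell^2 \mid m$ or some prime $p \mid m$ satisfies $\ell \mid p-1$. These ensure that the Brun--Titchmarsh-based bounds from Lemma \ref{lem:gcdformula} transfer with only cosmetic changes.

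Concretely, I would reduce to convenient $n = mP \le x$ with $(a,n)=1$ for which (i) $m$ has no repeated prime factor exceeding $\log_3 x$; (ii) every positive integer $s \le w' := \log_2 x/(\log_3 x)^2$ divides $\ell_a(m)$; (iii) $(\ell_a(m), \ell_a(P))$ has all prime factors at most $w'' := \log_2 x \cdot (\log_3 x)^2$; and (iv) $P-1$ has no prime factor in $(w', w'']$. Once (i)--(iv) are in place, the argument concluding the proof of Lemma \ref{lem:gcdformula} applies essentially verbatim: $(\ell_a(m), \ell_a(P))$ is $w'$-smooth, so divides $s_w(\ell_a(P))$; a strict inequality would force a squarefull divisor $p^k > w'$ of $\ell_a(P) \mid P - 1$, a pathology ruled out on all but $o(x)$ inputs by the standard sum-over-squarefull-numbers bound used at the end of Lemma \ref{lem:gcdformula}.

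The main obstacle, and the place where GRH and the cited work of Li--Pomerance, Moree, and Pappalardi enter, is step (ii). If $s \nmid \ell_a(m)$, then some prime power $q^\nu \| s$ satisfies $q^\nu \nmid \ell_a(p)$ for every $p \mid m$ (since $\ell_a(p) \mid \ell_a(p^k)$ for $p^k \| m$). On GRH, a Chebotarev argument applied to the Kummer-type field $\mathbb{Q}(\zeta_{q^\nu}, a^{1/q^\nu})$ (and its subfields, to account for all $k \ge \nu$) yields an estimate of the shape
\[ \sum_{\substack{p \le x \\ q^\nu \nmid \ell_a(p)}} \frac{1}{p} \le \beta_{s,a}\, \log_2 x + O(\log_3 x), \]
with $1 - \beta_{s,a} \gg 1/s$ uniformly for $s$ in our range. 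This is the analogue of Proposition \ref{prop:slightgen} for multiplicative orders. Plugging it into the template of Lemma \ref{lem:gcdformula}(ii) yields $o(x/\exp((\log_3 x)^{3/2}))$ exceptions per $s$, which sums to $o(x)$ over $s \le w'$.

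Steps (iii) and (iv) require no new ideas. For (iii), a prime $\ell > w''$ dividing $(\ell_a(m), \ell_a(P))$ forces $\ell \mid P-1$ and (by (i)) the existence of some $p \mid m$ with $p \equiv 1 \pmod \ell$; Brun--Titchmarsh bounds the count of such $n$ by $O(x(\log_2 x)^2/\ell^2)$, summable over $\ell > w''$ to $o(x)$. For (iv), since any $\ell \in (w', w'']$ dividing $\ell_a(P)$ also divides $P-1$, the treatment of condition (iv) in Lemma \ref{lem:gcdformula} goes through verbatim. With (i)--(iv) in hand, the identity $(\ell_a(m), \ell_a(P)) = s_w(\ell_a(P))$ holds on all but $o(x)$ convenient $n = mP \le x$ coprime to $a$.
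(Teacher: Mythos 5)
Your proposal follows essentially the same route as the paper's proof: the same list of working hypotheses mirroring Lemma \ref{lem:gcdformula} (the paper labels them (ii$'$)--(iv$'$) and imports their proofs, together with condition (i), verbatim from that lemma with $q=1$), and the same endgame in which a failure of $(\ell_a(m),\ell_a(P))=s_w(\ell_a(P))$ is reduced, via (ii$'$)--(iv$'$), to $P-1$ having a squarefull divisor exceeding $w'$, a pathology already ruled out at the end of the proof of Lemma \ref{lem:gcdformula}.

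The one substantive discrepancy is your treatment of step (ii). You invoke GRH there, but Lemma \ref{lem:ordlem1} is stated, and proved in the paper, unconditionally. The required input is the estimate $\sum_{p\le x,\ s\mid \ell_a(p)} 1/p = \beta_a(s)\log_2 x + O(\log(3s)/\phi(s))$ with $\beta_a(s)\asymp 1/s$, which the paper derives (Proposition \ref{prop:pap} and Lemma \ref{lem:orderrecip}) from Pappalardi's \emph{unconditional} theorem on the density of $p$ with $d\mid \ell_a(p)$. Divisibility conditions of this kind involve only finitely many splitting conditions in number fields of degree bounded in terms of $s$, so no Riemann hypothesis is needed; GRH enters only in Lemma \ref{lem:ordlem2}, where one must control the large prime factors of $(P-1)/\ell_a(P)$, an Artin-type condition involving infinitely many fields. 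As written, your argument would establish the lemma only under GRH. Since the lemma feeds into the already conditional Theorem \ref{thm:Benford2}, this does not damage the final application, but it does fail to prove the lemma as stated; replacing your conditional Chebotarev input with the unconditional Proposition \ref{prop:pap} repairs this with no other changes. The remaining steps (reducing $s\nmid\ell_a(m)$ to a prime-power obstruction, the Brun--Titchmarsh bounds for (iii) and (iv), and the concluding squarefull-divisor count) all match the paper.
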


\begin{lem}[conditional on GRH]\label{lem:ordlem2} Fix an integer $a$ with $|a| > 1$. Among all convenient $n=mP\le x$ with $(a,n)=1$, all but $o(x)$ are such that $(P-1)/\ell_a(P)$ is $w$-smooth.
\end{lem}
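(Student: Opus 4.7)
The plan is to translate the failure of $w$-smoothness of $(P-1)/\ell_a(P)$ into a splitting condition in a Kummer extension, and then bound the resulting sum via GRH. Observe that some prime $\ell > w$ divides $(P-1)/\ell_a(P)$ if and only if $\ell \mid P-1$ and $a$ is a perfect $\ell$-th power modulo $P$, equivalently $P$ splits completely in $K_\ell := \Q(\zeta_\ell, a^{1/\ell})$. For $\ell$ exceeding some constant $C = C(a)$, one has $[K_\ell : \Q] = \ell(\ell-1)$ and $\log|d_{K_\ell}| \ll \ell^2 \log(\ell|a|)$ (e.g., via the conductor-discriminant formula). A union bound over $\ell$ gives
\[ \#\{\text{bad convenient } n \le x\} \le \sum_{\ell > w} \sum_{\substack{P \le x \\ P \text{ splits in } K_\ell}} \frac{x}{P}, \]
and the task is to show the right-hand side is $o(x)$.

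I would split the $\ell$-range at $L := x^{1/8}$. For $w < \ell \le L$, GRH together with the effective Chebotarev density theorem of Lagarias--Odlyzko gives
\[ \#\{P \le x: P \text{ splits in } K_\ell\} = \frac{\Li(x)}{\ell(\ell-1)} + O\bigl(\sqrt{x}\, \ell^2 \log(x\ell|a|)\bigr); \]
in this range the main term dominates, so partial summation yields $\sum_P 1/P \ll \log_2 x/\ell^2$. Summing over primes $\ell > w$ (Mertens) contributes $\ll \log_2 x/(w \log w) = 1/\log_3 x$, so the small-$\ell$ contribution to the bad-$n$ count is $\ll x/\log_3 x = o(x)$.

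For $\ell > L$, any split $P$ satisfies $(P-1)/\ell_a(P) \ge \ell > x^{1/8}$, hence $\ell_a(P) < P/x^{1/8}$. Using the Markov-style bound $\1[\ell_a(P) x^{1/8} < P] \le P/(x^{1/8} \ell_a(P))$,
\[ \sum_{\substack{P \le x \\ \ell_a(P) < P/x^{1/8}}} \frac{1}{P} \le x^{-1/8} \sum_{P \le x} \frac{1}{\ell_a(P)}; \]
under GRH, the cited results of Li--Pomerance, Moree, and Pappalardi provide $\sum_{P \le x} 1/\ell_a(P) = (\log x)^{O(1)}$, so the right-hand side is $o(1)$ and the large-$\ell$ contribution is also $o(x)$.

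The hard part is the large-$\ell$ regime: once $\ell > x^{1/8}$, the effective Chebotarev error for $K_\ell$ swamps its main term, so no direct GRH bound on split primes in $K_\ell$ is available. The workaround is to reformulate the bad event in terms of $\ell_a(P)$ and combine a Markov inequality with the GRH-conditional reciprocal estimate for $1/\ell_a(P)$ from the cited literature; this is what makes the argument go through.
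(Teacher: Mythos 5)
Your reduction to complete splitting in $K_\ell=\Q(\zeta_\ell,a^{1/\ell})$ is a correct starting point, but the step ``partial summation yields $\sum_P 1/P\ll \log_2 x/\ell^2$'' in the range $w<\ell\le x^{1/8}$ has a genuine gap, and it is in fact the hard part of the lemma (not, as you suggest, the large-$\ell$ regime alone). The GRH--Chebotarev error term $\sqrt{Y}\,\ell^2\log(Y\ell|a|)$ exceeds the main term $Y/(\ell^2\log Y)$ for all $Y\le\ell^8$ up to logarithms, so for primes $P$ below roughly $\ell^8$ you obtain no density $1/\ell^2$; the only information available there is $P\equiv 1\pmod{\ell}$, and by Lemma \ref{lem:PN} the reciprocal sum of such $P\le \ell^8$ can be as large as $\asymp \log\ell/\ell$. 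Since $\sum_{\ell>w}\log\ell/\ell$ diverges over primes $\ell\le x$, the union bound over $\ell$ collapses precisely in this ``$P$ small relative to $\ell$'' regime. This is exactly why the paper follows Li--Pomerance: Lemma \ref{lem:LPlem0} first discards the $O(x/\log_3 x)$ integers $n$ having a prime factor $p>w$ with $\ell_a(p)<p^{1/2}/\log p$, which forces $P>\ell^2/(4(\log\ell)^2)$ whenever $\ell\mid (P-1)/\ell_a(P)$; Lemma \ref{lem:LPlem1} then disposes of the multiplicatively short window $\ell^2/(4(\log\ell)^2)<P\le \ell^2(\log\ell)^4$ using only the congruence $P\equiv 1\pmod{\ell}$, with a contribution $\ll x\log_2(2\ell)/(\ell\log\ell)$ that is summable over $\ell$; and only for $P\ge \ell^2(\log\ell)^4$ does the GRH input (Lemma \ref{lem:LPlem2}) take over.

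Your treatment of $\ell>x^{1/8}$ also leans on the assertion that $\sum_{P\le x}1/\ell_a(P)=(\log x)^{O(1)}$ under GRH, which is not provided by the cited results of Li--Pomerance, Moree, or Pappalardi. Unconditionally, the bound $\#\{P:\ell_a(P)\le T\}\ll T^2\log|a|$ yields only $\sum_{P\le x}1/\ell_a(P)\ll x^{2/3}$, which your $x^{-1/8}$ damping cannot overcome; and a GRH proof of the polylogarithmic bound would require controlling $\#\{P\le x: d\mid (P-1)/\ell_a(P)\}$ for all moduli $d$ up to $x$, thereby re-encountering the same small-$P$-versus-large-modulus obstruction. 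As written, this step outsources the main difficulty rather than resolving it.
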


Fix $a \in \mathbb{Z}$ with $|a| > 1$. If $n\le x$ is convenient with $(a,n)=1$ and $n$ satisfies the conclusions of both Lemmas \ref{lem:ordlem1} and \ref{lem:ordlem2}, then 
\[ \ell_a(mP) = \lcm[\ell_a(m),\ell_a(P)] = \ell_a(m)\cdot \frac{\ell_a(P)}{s_w(\ell_a(P))} = \ell_a(m)\cdot \frac{P-1}{s_w(P-1)}. \]
From Lemmas \ref{lem:fewinconvenient} (with $q=1$), \ref{lem:ordlem1}, and \ref{lem:ordlem2},  we deduce that $\sum_{n\le x,~(n,a)=1} \ell_a(n)^{i\theta}$ coincides, up to an error of $o(x)$, with
\begin{equation}\label{eq:ordexpr0} \sum_{\substack{m \le x\\ (m,a)=1 \\ L_m < x/m}} \ell_a(m)^{i\theta} \sum_{\substack{d\text{ $w$-smooth} \\ 2\mid d}} \frac{1}{d^{i\theta}} \sum_{\substack{L_m < P \le x/m \\ s_w(P-1)=d}} (P-1)^{i\theta}. \end{equation}
This is nearly the same expression as \eqref{eq:lambdaexpr0}; the only differences are that $\lambda(m)$ has been replaced by $\ell_a(m)$ and that $m$ is now restricted to values coprime with $a$. It is straightforward to check that the method used to bound \eqref{eq:lambdaexpr0} still applies, so that \eqref{eq:ordexpr0} is $o(x)$ (in fact, $O(x/\sqrt{\log_3{x}})$). Thus, it remains only to prove Lemmas \ref{lem:ordlem1} and \ref{lem:ordlem2}.

For the proof of Lemma \ref{lem:ordlem1}, we recall a result of Pappalardi on the frequency of primes $p$ for which $\ell_a(p)$ is divisible by a given $d$.

\begin{prop}\label{prop:pap} Fix an integer $a$ with $|a| > 1$. For all $x\ge 3$, and all positive integers $d$,
  \begin{equation}\label{eq:papestimate} \#\{p\le x: p\nmid a, d \mid \ell_a(p)\} = \beta_a(d) \frac{x}{\log{x}}  + O\left(\tau(d)d\cdot x \left(\frac{(\log_2{x})^2}{\log{x}}\right)^{7/6}\right), \end{equation}
where $\beta_a(d)$ is a constant depending only on $a$ and $d$, and 
\begin{equation}\label{eq:moree} \beta_a(d) \asymp 1/d.\end{equation}
\end{prop}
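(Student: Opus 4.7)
The plan is to express the condition $d \mid \ell_a(p)$ as a splitting condition in a family of Kummer extensions of $\Q$, then apply an unconditional effective Chebotarev density theorem. The overall strategy follows Hooley's approach to Artin's primitive-root conjecture, in the unconditional variant developed by Pappalardi.

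First, I would make the condition Galois-theoretic. Writing $\ell_a(p) = (p-1)/i_a(p)$, where $i_a(p)$ denotes the index of $\langle a\rangle$ in $\F_p^*$, the condition $d \mid \ell_a(p)$ becomes $d \cdot i_a(p) \mid p-1$. Splitting according to the value $i := i_a(p)$, and using the standard observation that $i \mid i_a(p)$ is equivalent to $p$ splitting completely in the Kummer field $K_i := \Q(\zeta_i, a^{1/i})$, M\"obius inversion yields
\[
\#\{p \le x : d \mid \ell_a(p),\ p \nmid a\} = \sum_{i \ge 1}\sum_{\substack{m \ge 1 \\ m\text{ squarefree}}} \mu(m)\,\pi_{L_{i,m,d}}(x),
\]
where $L_{i,m,d} := \Q(\zeta_{\lcm(im,\,di)},\, a^{1/im})$ and $\pi_L(x)$ counts primes $p \le x$ splitting completely in $L$. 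The condition $di \mid p-1$ is absorbed into the cyclotomic factor of $L_{i,m,d}$.

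Second, I would apply an effective unconditional Chebotarev density theorem (in the Lagarias--Odlyzko form, or a sharpening due to Serre) to each $\pi_{L_{i,m,d}}(x)$, writing it as $\Li(x)/[L_{i,m,d}:\Q]$ plus an error governed by the degree and discriminant of $L_{i,m,d}$. Summing the main terms with the M\"obius weights produces
\[
\beta_a(d) = \sum_{i,m} \frac{\mu(m)}{[L_{i,m,d}:\Q]}.
\]
A standard Kummer computation gives $[L_{i,m,d}:\Q] \asymp im \cdot \varphi(\lcm(im,di))$, up to a factor depending only on $a$, and the multiplicative manipulation in Moree \cite{moree05} yields $\beta_a(d) \asymp 1/d$; I would invoke his result directly. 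Heuristically, the bound reflects the fact that in a cyclic group of order $p-1$ the proportion of elements of order divisible by $d$ (averaged over the possible strata $v_\ell(p-1)$) is $\asymp 1/d$ when weighted by the density $1/\varphi(d)$ that $d \mid p-1$.

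The principal obstacle is the error analysis. To attain the specific shape $\tau(d)\,d\cdot x\,\bigl((\log_2 x)^2/\log x\bigr)^{7/6}$, one truncates the $(i,m)$ sum at a carefully chosen threshold, applies the effective Chebotarev bound to the small-index range, and bounds the large-index tail directly via $\#\{p \le x : p \equiv 1 \pmod{im}\} \ll x/im$ (summed against the $\tau$-like multiplicity in the $(i,m)$ parametrization). The exponent $7/6$ is characteristic of Pappalardi's optimized balance between these two regimes and emerges from equating the Chebotarev error with the tail; the bookkeeping is delicate but conceptually routine, and requires no essentially new ingredient beyond the cited effective Chebotarev and Kummer-degree estimates.
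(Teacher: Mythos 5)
The paper does not prove this proposition at all: it quotes \eqref{eq:papestimate} as a special case of Theorem 1 of Pappalardi \cite{pappalardi15} and \eqref{eq:moree} as implicit in Moree's exact formula for $\beta_a(d)$ \cite[Theorem 2]{moree05}, adding only the remark that Pappalardi's hypothesis $x\to\infty$ may be dropped because \eqref{eq:papestimate} is trivial for $x$ bounded in terms of $a$. So you are attempting to reprove the cited result; note that for the bound $\beta_a(d)\asymp 1/d$ you in any case fall back on citing Moree, exactly as the paper does.

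Your reproof of the asymptotic has a genuine gap in the error analysis. By parametrizing by the exact index $i=i_a(p)$ and M\"obius-inverting, you recast $d\mid\ell_a(p)$ as an inclusion--exclusion over \emph{all} indices $i\le x$, which reintroduces the full difficulty of Hooley's treatment of Artin's conjecture. Unconditional effective Chebotarev handles only $i$ up to a very small threshold $T$ (the degree of $L_{i,m,d}$ is about $im\,\phi(im)$, so $T$ is at best a small power of $\log x$), and your proposed tail bound via $\#\{p\le x: p\equiv 1\pmod{im}\}\ll x/im$ does not close the middle range: Brun--Titchmarsh gives $\sum_{T<i\le\sqrt{x}}\pi(x;di,1)\gg (x/\log x)\sum_{T<i\le\sqrt{x}}1/\phi(i)\gg x$, which swamps the main term $\asymp x/(d\log x)$; and the observation that $i\mid i_a(p)$ forces $p\mid a^{(p-1)/i}-1$, giving $\#\{p:\ell_a(p)\le y\}\ll y^2$, disposes only of $i\gtrsim x^{1/2}$. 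Bridging $T<i<x^{1/2}$ unconditionally is exactly the open part of Artin's conjecture; under GRH your route works (that is Hooley's method), but the proposition as stated --- and Pappalardi's theorem --- is unconditional. The decomposition that makes an unconditional proof possible works prime-by-prime on $d$: for each $\ell^k\parallel d$, stratify by $e=v_\ell(p-1)$ and note that $\ell^k\mid\ell_a(p)$ iff $p\equiv 1\pmod{\ell^e}$, $p\not\equiv 1\pmod{\ell^{e+1}}$, and $a$ is not an $\ell^{\,e-k+1}$-th power mod $p$. Then only Kummer fields of $\ell$-power modulus for $\ell\mid d$ arise, the densities decay geometrically in $e$, and the tail over large $e$ is controlled by $\pi(x;\ell^e,1)\ll x/\phi(\ell^e)$ alone, so there is no middle range to worry about. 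If you intend to supply a proof rather than a citation, that is the decomposition to use.
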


The estimate \eqref{eq:papestimate} is a special case of \cite[Theorem 1]{pappalardi15} while \eqref{eq:moree} is implicit in the exact expression for $\beta_a(d)$ obtained by Moree in \cite[Theorem 2]{moree05}. We note that while the statement of Theorem 1 in \cite{pappalardi15} requires that $x\to\infty$, the estimate \eqref{eq:papestimate} holds trivially for $x\ge 3$ that are bounded in terms of $a$. 

Proposition \ref{prop:pap} implies the following analogue of Lemma \ref{lem:PN}, which seems of independent interest.

\begin{lem}\label{lem:orderrecip} Fix an integer $a$ with $|a| > 1$.  For each $x\ge 3$ and each positive integer $d$,
  \begin{equation}\label{eq:orderrecip} \sum_{\substack{p \le x,~p\nmid a \\ d \mid \ell_a(p)}} \frac{1}{p} = \beta_a(d)\log_2{x} + O\left(\frac{\log{(3d)}}{\phi(d)}\right), \end{equation}
  where $\beta_a(d)$ is as in the statement of Proposition \ref{prop:pap}.
\end{lem}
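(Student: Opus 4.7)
The plan is to deduce Lemma \ref{lem:orderrecip} by Abel summation from Proposition \ref{prop:pap}, in direct analogy with the passage from the prime number theorem in arithmetic progressions to Lemma \ref{lem:PN}. Setting $\pi^{*}(t) := \#\{p \le t : p\nmid a,\ d \mid \ell_a(p)\}$, we have
\[ \sum_{\substack{p \le x,~p\nmid a \\ d \mid \ell_a(p)}} \frac{1}{p} \;=\; \frac{\pi^{*}(x)}{x} + \int_{2}^{x} \frac{\pi^{*}(t)}{t^{2}}\,dt. \]
Substituting $\pi^{*}(t) = \beta_{a}(d)\,t/\log t + R(t)$ from Proposition \ref{prop:pap} and integrating the main-term piece produces the claimed leading term $\beta_{a}(d)\log_{2}x + O(\beta_{a}(d))$, and the $O(\beta_{a}(d)) = O(1/d)$ is absorbed into $O(\log(3d)/\phi(d))$ via the estimate $\beta_a(d) \asymp 1/d$ from \eqref{eq:moree}. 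The task thus reduces to controlling the Pappalardi contribution $R(x)/x + \int_{2}^{x} R(t)/t^{2}\,dt$.

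Because $R(t)$ carries a prefactor $\tau(d)\,d$ in Proposition \ref{prop:pap}, a blind partial summation would leave only $O(\tau(d)\,d)$, which is much weaker than the target $O(\log(3d)/\phi(d))$. My plan is therefore to split into regimes in $d$. When $d \ge \log x$, Proposition \ref{prop:pap} is too weak to use, but here the target bound is generous: the trivial majorization $\pi^{*}(t) \le \pi(t;d,1)$ combined with Lemma \ref{lem:PN} gives $S(x,d) \le \log_{2}x/\phi(d) + O(\log(3d)/\phi(d))$, and since $\log_{2}x \ll \log(3d)$ in this range, both $S(x,d)$ and the target main term $\beta_{a}(d)\log_{2}x = O(\log_2 x/d)$ already lie inside the allowed error, collapsing the identity. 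When $d < \log x$, I would split the range $[2,x]$ at a threshold $T = T(x,d)$, using Brun--Titchmarsh on $[2,T]$ via $\pi^{*}(t) \le \pi(t;d,1) \ll t/(\phi(d)\log(t/d))$ (so the contribution is $\ll \log\log(T/d)/\phi(d)$), and deploying Proposition \ref{prop:pap} on $[T,x]$ (whose tail integrates to $\ll \tau(d)\,d\,(\log_{2}x)^{7/3}\,(\log T)^{-1/6}$).

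The principal obstacle is the calibration of $T$ so that both contributions land within $O(\log(3d)/\phi(d))$. The first forces $\log T \ll (3d)^{O(1)}$, while the second demands $\log T \gg \big(\tau(d)\,d\,\phi(d)\,(\log_{2}x)^{7/3}/\log(3d)\big)^{6}$. These constraints are simultaneously satisfiable only when $d$ lies in a window such as $d \le (\log x)^{\delta}$ for a sufficiently small $\delta > 0$, closing the small-$d$ range cleanly. In the intermediate range $(\log x)^{\delta} < d < \log x$, where the two demands on $T$ conflict, I would fall back on the argument already used for $d \ge \log x$: in this zone $\log(3d)$ and $\log_{2}x$ are comparable, so Lemma \ref{lem:PN} alone bounds $S(x,d)$ by $O(\log(3d)/\phi(d))$ and $\beta_{a}(d)\log_{2}x = O(\log_2 x/d) = O(\log(3d)/\phi(d))$ as well, making the identity automatic. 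Routine bookkeeping then glues the three regimes together to give the claimed uniform estimate.
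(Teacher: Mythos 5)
Your overall strategy --- partial summation with a cutoff $T$, below which you majorize trivially by primes $\equiv 1\pmod{d}$ via Lemma \ref{lem:PN}, and above which you invoke Proposition \ref{prop:pap} --- is exactly the paper's (the paper takes $T=x_0=\exp(2d^{30})$), and your treatment of large $d$ (where $\log_2 x\ll\log(3d)$, so that both sides of \eqref{eq:orderrecip} are absorbed by the error term) is sound. The gap is in your main regime $d\le(\log x)^{\delta}$, and as written it is fatal: your two constraints on $T$ are \emph{not} simultaneously satisfiable there. For bounded $d$ (say $d=2$), the Brun--Titchmarsh constraint forces $\log_2 T\ll\log(3d)$, i.e.\ $\log T\ll 1$, while your tail bound demands $\log T\gg(\log_2 x)^{14}$, which tends to infinity with $x$. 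Your assertion that compatibility holds precisely when $d\le(\log x)^{\delta}$ is backwards: the inequality $(3d)^{O(1)}\gg\bigl(\tau(d)d\phi(d)(\log_2 x)^{7/3}/\log(3d)\bigr)^{6}$ requires $d$ to be at least a fixed power of $\log_2 x$, so your argument fails exactly in the small-$d$ range --- the only range in which the main term $\beta_a(d)\log_2 x$ is actually meaningful.

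The conflict is an artifact of a lossy estimate: you pulled $(\log_2 t)^{7/3}\le(\log_2 x)^{7/3}$ out of the integral, which is what injects the $x$-dependence into the second constraint. Instead absorb the doubly-logarithmic factor into the power of $\log t$, namely $((\log_2 t)^2/\log t)^{7/6}\ll(\log t)^{-11/10}$, whence
\[ \tau(d)\,d\int_{T}^{x}\frac{1}{t}\left(\frac{(\log_2 t)^2}{\log t}\right)^{7/6}\,\mathrm{d}t \ll d^{2}(\log T)^{-1/10}, \]
with no dependence on $x$. Taking $\log T=2d^{30}$ makes this $\ll d^{-1}\ll\log(3d)/\phi(d)$ while keeping $\log_2 T\ll\log(3d)$, so a single threshold depending only on $d$ works uniformly in $x$ and $d$; the three-way split in $d$ becomes unnecessary (only the trivial range $x\le T$ needs separate mention, handled by the same Lemma \ref{lem:PN} majorization together with $\beta_a(d)\ll 1/d$). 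With that repair your proof coincides with the paper's.
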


Lemma \ref{lem:orderrecip} is somewhat sharper than will be needed; it will suffice in our application that the $O$-term in \eqref{eq:orderrecip} is $O(1)$.

\begin{proof} First, observe that with $x_0 := \exp(2d^{30})$, 
\[ \sum_{\substack{p \le x_0,~p\nmid a \\ d \mid \ell_a(p)}} \frac{1}{p} \le \sum_{\substack{p \le x_0 \\ p\equiv 1\pmod{d}}} \frac{1}{p} \ll \frac{\log_2{x_0}}{\phi(d)} \ll \frac{\log(3d)}{\phi(d)}. \]
This implies \eqref{eq:orderrecip} when $3 \le x \le x_0$ (keeping in mind that $\beta_a(d) \ll 1/d$). So we suppose now that $x > x_0$. Let $N(t)$ be the number of primes $p$ up to $t$, not dividing $a$, for which $d\mid \ell_a(p)$. By Proposition \ref{prop:pap},
\begin{multline*} \sum_{\substack{x_0 < p \le x,~p\nmid a \\ d \mid \ell_a(p)}} \frac{1}{p} = \int_{x_0}^{x} \frac{1}{t} \,\mathrm{d}N(t) = \beta_a(d) \int_{x_0}^{x} \frac{\mathrm{d}t}{t\log{t}} \\ + O\left(\frac{\beta_a(d)}{\log{x_0}} + \tau(d) d \left(\frac{(\log_2{x_0})^2}{\log{x_0}}\right)^{7/6} + \tau(d) d \int_{x_0}^{x} \frac{1}{t}\left(\frac{(\log_2{t})^2}{\log{t}}\right)^{7/6}\,\mathrm{d}t\right). \end{multline*} The main term here has size $\beta_a(d) (\log_2{x}-\log_2{x_0}) = \beta_a(d) \log_2{x} + O(\log(3d)/d)$. Since $\tau(d)d \le d^2$ and $((\log_2{t})^2/\log{t})^{7/6} \ll (\log{t})^{-11/10}$, the $O$-terms are
\[ \ll \frac{1}{d\log{x_0}} + d^2 (\log{x_0})^{-11/10}+ d^2 \int_{x_0}^{x} t^{-1} (\log{t})^{-11/10}\,\mathrm{d}t \ll d^{-1}. \] Collecting estimates gives the lemma.
\end{proof}

\begin{proof}[Proof of Lemma \ref{lem:ordlem1}] We claim all but $o(x)$ convenient $n\le x$ having $(n,a)=1$ satisfy 
\begin{enumerate}
  \item[(ii$'$)] $\ell_a(m)$ is divisible by every positive integer $s\le w':=\log_2{x}/(\log_3{x})^2$, 
  \item[(iii$'$)]  $(\ell_a(m),\ell_a(P))$ has all prime factors at most $w'':=\log_2{x}(\log_3{x})^2$, \emph{and}
  \item[(iv$'$)] $\ell_a(P)$ has no prime factors in the interval $(w',w'']$.
\end{enumerate}
Conditions (iii$'$) and (iv$'$) are implied by conditions (iii) and (iv) from the proof of Lemma \ref{lem:gcdformula}, and so (from that proof, with $q=1$) these two conditions admit only $o(x)$ exceptions. Turning to (ii$'$), fix a positive integer $s\le w'$. If $s\nmid \ell_a(m)$, then there is no prime $p$ dividing $m$ for which $s\mid \ell_a(p)$. Hence,
\[ \sum_{m} \frac{1}{m} \le \prod_{\substack{p\le x,~p\nmid a \\ s\nmid \ell_a(p)}} \left(1-\frac{1}{p}\right)^{-1} \ll \log{x} \prod_{\substack{p\le x,~p\nmid a \\ s\mid \ell_a(p)}} \left(1-\frac{1}{p}\right) \ll (\log{x}) \exp(-\beta_a(s) \log_2{x}). \]
Given $m$, the number of possible $P$ is $O\left(\frac{x\sqrt{\log_3{x}}}{m\log{x}}\right)$. It follows that the number of convenient $n\le x$ with $(n,a)=1$ and $s\nmid \ell_a(n)$ is 
\[ \ll (x\sqrt{\log_3{x}}) \exp(-\beta_a(s) \log_2{x}) \ll x \exp(-(\log_3{x})^{3/2}), \]
using in the last step that $\beta_a(s) \gg 1/s \gg 1/w'$. Summing on $s\le w'$ shows that the number of exceptions to (ii$'$) is $o(x)$. 

We thus restrict to $n$ satisfying all of (ii$'$)--(iv$'$). By (iii$'$) and (iv$'$), $(\ell_a(m),\ell_a(P))$ is a divisor of $s_w(\ell_a(P))$. If it is a proper divisor, then there is a prime power $p^k$ with $p\le w'$ such that $p^k \mid \ell_a(P)$ but $p^k \nmid \ell_a(m)$. By (ii$'$), it must be that $k > 1$. Since $s\mid \ell_a(P) \mid P-1$, we deduce that $P-1$ is divisible by a proper prime power $s>w'$. The number of convenient $n$ of this kind is shown to be $o(x)$ in the paragraph concluding the proof of Lemma \ref{lem:gcdformula} (read now with $q=1$).
\end{proof}

Lemma \ref{lem:ordlem2} is very close in character to Li and Pomerance's Proposition 1 in \cite{LP03} and admits a nearly identical proof. The next three results are taken from \cite{LP03}.

\begin{lem}\label{lem:LPlem0} Fix an integer $a$ with $|a| > 1$. For all $x\ge 100$, the number of integers $n\le x$ divisible by a prime $p > w$ with $\ell_a(p) < p^{1/2}/\log{p}$ is $\ll x/\log_3{x}$. \end{lem}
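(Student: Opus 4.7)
The plan is to bound the quantity in question by the trivial estimate $\sum_{p\text{ bad}} x/p$, where I call a prime $p$ \emph{bad} when $p > w$ and $\ell_a(p) < p^{1/2}/\log{p}$. (Every bad prime contributes at most $x/p$ multiples in $[1,x]$.) It thus suffices to show
\[ \sum_{p\text{ bad}} \frac{1}{p} \ll \frac{1}{\log w} = \frac{1}{\log_3 x}. \]

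I would estimate this sum by partitioning the bad primes according to the value of $t := \ell_a(p)$. The key input is the classical observation that every prime $p$ with $\ell_a(p)=t$ divides $a^{t}-1$; since $a^{t}-1 \le |a|^{t}$, the number of such primes is at most $\log(a^{t}-1)/\log 2 \ll t$ (with implied constant depending on $a$). Moreover, the bad-prime inequality $t < p^{1/2}/\log{p}$ can be squared and rearranged to give $p > t^{2}(\log p)^{2}$; combining with $p > t^{2}$ (which implies $\log p > 2\log t$ for $t\ge 2$) yields $p \gg t^{2}(\log t)^{2}$. Hence
\[ \sum_{\substack{p\text{ bad} \\ \ell_a(p)=t}} \frac{1}{p} \ll \frac{t}{\max(w,\, t^{2}(\log t)^{2})}. \]

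To conclude, I would split the sum over $t$ at the threshold $t_{0} := \sqrt{w}/\log w$ (the approximate crossover point where the two arguments of the $\max$ balance). For $t\le t_{0}$, each summand is $O(t/w)$ and the partial sum is $O(t_{0}^{2}/w) = O(1/(\log w)^{2})$. For $t>t_{0}$, each summand is $O(1/(t(\log t)^{2}))$, and Abel summation gives $\sum_{t>t_{0}} 1/(t(\log t)^{2}) \ll 1/\log t_{0} \ll 1/\log w$. Adding the two contributions delivers the required bound $O(1/\log w) = O(1/\log_{3} x)$.

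I do not expect a serious obstacle. The only minor delicacy is the handful of small values of $t$ where the counting bound $\#\{p:\ell_a(p)=t\}\ll t$ may be dominated by a constant depending on $a$ (for instance $t=1$ corresponds to the finitely many prime divisors of $a-1$); these contribute only finitely many primes, which for $x$ large enough are all less than $w$ and so are excluded from consideration anyway. Everything else is routine manipulation once the bound $\#\{p:\ell_a(p)=t\}\ll t$ is recognized as the essential ingredient.
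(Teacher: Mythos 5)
Your argument is correct. The paper does not prove this lemma at all --- it simply cites it as the special case $\psi(x)=\log_2 x$ of Lemma 1 in Li--Pomerance \cite{LP03} --- and your proof is essentially the standard argument underlying that citation: bound the count by $\sum_{p\ \mathrm{bad}} x/p$, use that the primes with $\ell_a(p)=t$ divide $a^t-1$ and hence number $O_a(t)$, note that badness forces $p\gg \max(w,\,t^2(\log t)^2)$, and sum over $t$. The edge cases you flag (small $t$, the $a$-dependent constant, the trivial range of $x$) are handled correctly, so the proposal stands as a complete self-contained proof.
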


\begin{lem}\label{lem:LPlem1} Let $r$ be a prime. For all $x\ge 3$, the number of integers $n\le x$ divisible by a prime $p\equiv 1\pmod{r}$ with 
  \[ \frac{r^2}{4 (\log{r})^2} < p \le r^2 (\log{r})^4 \]
is $\ll x(\log_2{(2r)})/r\log{r}$. 
\end{lem}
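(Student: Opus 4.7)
The plan is to bound the count by summing $x/p$ over the relevant primes and then estimate the resulting reciprocal sum by a dyadic Brun--Titchmarsh argument.

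We may assume $r$ is larger than some absolute constant; for bounded $r$ the lemma is trivial, since $r^2(\log r)^4 = O(1)$ forces the set of admissible primes to be finite, and $\log_2(2r)/(r\log r)\gg 1$ in that range. So suppose $r$ is large. Let $I = (r^2/4(\log r)^2,\,r^2(\log r)^4]$ and observe that every $p\in I$ satisfies $p\ge r^{3/2}$ (say), so in particular $\log(p/r)\asymp \log r$. The number of $n\le x$ divisible by some prime $p\equiv 1\pmod r$ with $p\in I$ is at most
\[
\sum_{\substack{p\in I\\ p\equiv 1\,(r)}} \frac{x}{p},
\]
so it suffices to prove $\sum_{p\in I,\,p\equiv 1\,(r)} 1/p \ll \log_2(2r)/(r\log r)$.

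To do this, split $I$ into $O(\log\log r)$ dyadic intervals $(T,2T]$, each contained in $I$ (so $T\asymp r^2$ up to a factor of $(\log r)^{O(1)}$, and hence $\log(T/r)\asymp \log r$). By the Brun--Titchmarsh inequality,
\[
\pi(2T;\,r,1) - \pi(T;\,r,1) \ll \frac{T}{\phi(r)\log(T/r)} \ll \frac{T}{r\log r}.
\]
Dividing through by $T$ gives $\sum_{T<p\le 2T,\,p\equiv 1\,(r)} 1/p \ll 1/(r\log r)$. Since the number of dyadic pieces needed to cover $I$ is
\[
\log_2\!\left(\frac{r^2(\log r)^4}{r^2/4(\log r)^2}\right) = O(\log\log r) = O(\log_2(2r)),
\]
summing the dyadic bounds yields $\sum_{p\in I,\,p\equiv 1\,(r)} 1/p \ll \log_2(2r)/(r\log r)$, as desired.

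The only substantive obstacle is confirming that the interval $I$ lies in the Brun--Titchmarsh range, i.e.\ that $\log(p/r)$ is genuinely of size $\log r$ rather than degenerating; this is where the hypothesis $p > r^2/4(\log r)^2$ is used (it gives $p/r > r/(4(\log r)^2)$, hence $\log(p/r) \gg \log r$). The rest is bookkeeping.
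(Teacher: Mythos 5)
Your argument is correct. The paper gives no proof of this lemma---it is quoted verbatim as Lemma 2 of Li--Pomerance \cite{LP03}---and your union bound $\sum_p x/p$ followed by Brun--Titchmarsh on the short range $\log(p/r)\asymp\log r$ (with the $\log_2(2r)$ factor coming from the $O(\log\log r)$ dyadic pieces, equivalently from partial summation over an interval of multiplicative length $(\log r)^{O(1)}$) is exactly the standard argument behind that citation.
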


\begin{lem}[GRH-conditional]\label{lem:LPlem2} Suppose that $r$ is an odd prime and that $a$ is not an $r$th power. Let $A_{r}$ denote the set of primes $p\equiv 1\pmod{r}$ with $a^{(p-1)/r} \equiv 1\pmod{p}$. For $x\ge 3$, the number of $n\le x$ divisible by a prime $p\in A_{r}$ with $p \ge r^2 (\log{r})^4$ is $\ll x/r\log{r} + x\log_2{x}/r^2$. 
\end{lem}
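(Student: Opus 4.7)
The plan is to interpret $A_r$ via Kummer theory: since $r$ is an odd prime and $a$ is not an $r$-th power in $\Q$, the extension $K:=\Q(\zeta_r, a^{1/r})$ has degree $[K:\Q]=r(r-1)$, and an unramified prime $p$ belongs to $A_r$ precisely when it splits completely in $K$ (equivalently, its Frobenius in $\mathrm{Gal}(K/\Q)$ is trivial). Under GRH, the effective Chebotarev density theorem of Lagarias--Odlyzko, as sharpened by Serre, provides
\[ \pi_{A_r}(t) = \frac{\Li(t)}{r(r-1)} + O\!\left(\frac{t^{1/2}}{r(r-1)}\bigl(\log|d_K| + [K:\Q]\log t\bigr)\right). \]
Combining with the standard conductor-discriminant bound $\log|d_K|\ll r^2\log r + r\log|a|$ for Kummer extensions (and absorbing the fixed $|a|$ into the implied constant) yields, for the error $\pi^*(t):=\pi_{A_r}(t)-\Li(t)/(r(r-1))$, the estimate $\pi^*(t)\ll t^{1/2}\log(rt)$.

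Next, the number of $n\le x$ divisible by some $p\in A_r$ with $p\ge t_0 := r^2(\log r)^4$ is at most $x\sum 1/p$, where the sum ranges over $p\in A_r\cap[t_0,x]$. I would evaluate the sum by Riemann--Stieltjes integration:
\[ \sum_{\substack{p\in A_r\\ t_0\le p\le x}} \frac{1}{p} \;=\; \frac{1}{r(r-1)}\int_{t_0}^{x}\frac{\mathrm{d}s}{s\log s} \;+\; \int_{t_0}^{x}\frac{\mathrm{d}\pi^*(s)}{s}. \]
The main integral contributes $\ll \log_2 x/r^2$, producing the summand $x\log_2 x/r^2$. For the error integral, integration by parts reduces matters to the boundary term at $t_0$, which is of size $t_0^{-1/2}\log(rt_0) \asymp (\log r)/(r(\log r)^2) = 1/(r\log r)$; after multiplication by $x$ this gives the summand $x/(r\log r)$.

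The main delicate point is the uniformity in $r$ of the GRH-conditional Chebotarev error: the threshold $t_0 = r^2(\log r)^4$ is engineered precisely so that $t_0^{-1/2}\log(rt_0)=\Theta(1/(r\log r))$, and to reach this one needs both the $1/[K:\Q]$ prefactor present in Serre's form of effective Chebotarev and the bound $\log|d_K|\ll r^2\log r$ for the Kummer discriminant. With those two ingredients the remainder is a routine partial-summation calculation, parallel to the splitting arguments in \cite{LP03}.
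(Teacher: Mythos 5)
Your argument is correct and is essentially the proof of Lemma 3 in \cite{LP03}, which the paper cites rather than reproves: identify $A_r$ (up to the $O(1)$ ramified primes) with the primes splitting completely in the Kummer extension $\Q(\zeta_r,a^{1/r})$ of degree $r(r-1)$, apply the GRH-effective Chebotarev/prime ideal theorem to get $\pi_{A_r}(t)=\Li(t)/r(r-1)+O(t^{1/2}\log(rt))$, and finish by partial summation of $\sum 1/p$ over $p\ge r^2(\log r)^4$. The threshold is indeed calibrated exactly as you say, so your reconstruction matches the intended proof.
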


Lemma \ref{lem:LPlem0} is the special case of \cite[Lemma 1]{LP03} where $\psi(x) = \log_2{x}$. Lemmas \ref{lem:LPlem1} and \ref{lem:LPlem2} are restatements of Lemmas 2 and 3 of \cite{LP03}, respectively.

\begin{proof}[Proof of Lemma \ref{lem:ordlem2} {\rm (}following the proof of Proposition 1 in \cite{LP03}{\rm )}] Suppose that $(P-1)/\ell_a(P)$ has a prime factor $r > w$. Then $P\equiv 1\pmod{r}$ and $\ell_a(P) \mid (P-1)/r$, so that $a^{(P-1)/r}\equiv 1\pmod{P}$. Hence, $P$ is a prime factor of $n$ belonging to the set $A_r$ from Lemma \ref{lem:LPlem2}. 

Thus, it is enough to show that only $o(x)$ positive integers $n\le x$ have a prime factor $p$ from any of the sets $A_r$, where $r>w$ is prime. We can assume by Lemma \ref{lem:LPlem0} that $\ell_a(p) > p^{1/2}/\log{p}$. Since $\ell_a(p) \le \frac{p-1}{r}$, we must have $p > \frac{r^2}{4(\log{r})^2}$. Noting that $a$ cannot be an $r$th power (once $x$ is large), Lemmas \ref{lem:LPlem1} and \ref{lem:LPlem2} put the remaining $n$ in a set of size 
\[ \ll x\sum_{\substack{r>w \\r\text{ prime}}} \left(\frac{\log_2{r}}{r\log{r}} + \frac{\log_2{x}}{r^2}\right) \ll x\frac{\log_4{x}}{\log_3{x}}, \]
which is $o(x)$.
\end{proof}

\section*{Acknowledgements} The author is supported by NSF award DMS-2001581.

\providecommand{\bysame}{\leavevmode\hbox to3em{\hrulefill}\thinspace}
\providecommand{\MR}{\relax\ifhmode\unskip\space\fi MR }
\providecommand{\MRhref}[2]{%
  \href{http://www.ams.org/mathscinet-getitem?mr=#1}{#2}
}
\providecommand{\href}[2]{#2}

\end{document}